\numberwithin{equation}{section}
\newtheorem{thm}[equation]{Theorem}
\newtheorem*{thm*}{Theorem}
\newtheorem{cor}[equation]{Corollary}
\newtheorem{lem}[equation]{Lemma}
\newtheorem{prop}[equation]{Proposition}
\theoremstyle{definition}
\newtheorem{defi}[equation]{Definition}
\newtheorem{rem}[equation]{Remark}
\newtheorem{exa}[equation]{Example}
\newtheorem*{exa*}{Example}
\newtheorem{con}[equation]{Construction}
\def\on{\operatorname}
\def\op{{\on{op}}}
\def\id{\on{id}}
\def\Fun{\on{Fun}}
\def\lim{\on*{lim}}
\def\tot{\on*{tot}}
\def\Hom{\on{Hom}}
\def\N{\on{N}}
\def\P{\mathcal{P}}
\def\L{\mathcal{L}}
\def\U{\mathcal{U}}
\def\Prhd{\on{P}^{\rhd}\!}
\def\Plhd{\on{P}^{\lhd}\!}
\def\hra{\hookrightarrow}
\def\lra{\longrightarrow}
\def\llra{\longleftrightarrow}
\def\O{\mathbb{O}}
\DeclareMathSymbol\DDelta\mathord{bbold}{"01}
\DeclareMathSymbol\GGamma\mathord{bbold}{"00}
\DeclareMathSymbol\LLambda\mathord{bbold}{"03}
\DeclareMathSymbol\SSigma\mathord{bbold}{'117}
\def\C{\EuScript{C}}
\def\Cat{\on{Cat}}
\def\CCat{\on{\mathbb{C}at}}
\def\St{{\EuScript S}t}
\def\Set{{\on{Set}}}
\def\sSet{ {\Set_{\Delta}}}
\def\Ch{\on{Ch}}
\def\sd{\on{sd}}
\def\rsd{\on{rsd}}
\def\co{\on{co}}
\def\Snk{{\EuScript S}_n^{\langle k \rangle}}
\def\Sk{{\EuScript S}^{\langle k \rangle}}
\def\Se{{\EuScript S}}
\def\NN{{\mathbb{N}}}
\def\A{{\EuScript A}}
\def\X{{\EuScript X}}
\def\D{{\EuScript D}}
\def\E{{\EuScript E}}
\def\RR{{\mathbb R}}
\def\U{ {\EuScript U}}
\setlist[enumerate,1]{label=(\arabic{*})}
\setlist[enumerate,2]{label=(\alph{*})}
\setlist[enumerate,3]{label=(\roman{*})}
\title{Cyclic polytopes, orientals, and correspondences: some aspects of higher Segal spaces} 
\author{Tobias Dyckerhoff\footnote{Universität Hamburg
Fachbereich Mathematik
Bundesstraße 55
20146 Hamburg, email: {\tt tobias.dyckerhoff@uni-hamburg.de}}}
\begin{document}

\maketitle

\begin{abstract}
    We discuss the role of higher Segal spaces at the interface of cyclic
    polytopes, orientals, and higher correspondences. Along the way
    we review examples from algebraic K-theory, show how cyclic polytopes
    provide a geometric model for the definition of orientals, and establish a
    characterization of higher Segal spaces as lax monadic structures in higher
    correspondence categories.
\end{abstract}

\tableofcontents

\section{Introduction}

These are extended notes on talks given at the Banff workshop on ``Higher Segal
Spaces and their Applications to Algebraic K-Theory, Hall Algebras, and
Combinatorics''. Many of the results and ideas presented here are in some form
present in the literature while we try to tie some loose ends or discuss
slightly different perspectives here and there.
We give a rough overview of the contents:

\begin{itemize}
    \item[\S \ref{sec:cyclic_polytopes}.] Discussion of the geometry and combinatorics of cyclic polytopes
    \item[\S \ref{sec:higher_segal_objects}.]  Definition, basic theory, and examples of higher Segal spaces
    \item[\S \ref{sec:orientals}.]  Geometric construction of Street's orientals \cite{street} via cyclic polytopes
    \item[\S \ref{sec:the_interplay_of_higher_segal_conditions}.]  Some key
        results on higher Segal spaces based on results of Rambau \cite{rambau} on cyclic
        polytopes
    \item[\S \ref{sec:correspondences}.] Description of the
        $(\infty,\omega)$-category of correspondences via barycentric
        subdivision
    \item[\S \ref{sec:monads}.] Characterization of higher Segal spaces as
        certain lax monads in higher correspondence categories
    \item[\S \ref{sec:higherex}.] Alternative characterization of higher Segal spaces via higher excision due to T. Walde 
    \item[\S \ref{sec:further}.] Some further perspectives 
\end{itemize}

{\bf Acknowledgements.} I thank the organizers of the workshop, namely Julie
Bergner, Joachim Kock, and Maru Sarazola, for their initiative to bring
together the community and for creating a fruitful atmosphere for the many
insightful exchanges that happened during the workshop. I am indebted to
Mikhail Kapranov for all the shared insights over the years -- in particular,
his proposal to use cyclic polytopes as a foundation for the theory of higher
Segal spaces is a cornerstone of everything presented in this article. 
I am very grateful to an anonymous referee for the careful reading and the many
suggestions that helped improve the text. Last but not least, I acknowledge
support by the Deutsche Forschungsgemeinschaft (DFG, German Research
Foundation) under Germany's Excellence Strategy - EXC 2121 ``Quantum Universe''
- 390833306 and the Collaborative Research Center - SFB 1624 ``Higher
structures, moduli spaces and integrability'' - 506632645.

\section{Cyclic polytopes}%
\label{sec:cyclic_polytopes}

We recall some basics of cyclic polytopes. See \cite{rambau}, for more
background and detailed proofs.

\begin{defi}
\label{def:cyclic_polytope}
    Let $d \ge 0$ and consider the map
    \[
        \nu: \RR \hra \RR^d, \; t \mapsto (t,t^2,t^3,...,t^d).
    \]
    For a finite subset $S \subset \RR$, we define the $d$-dimensional cyclic polytope 
    \[
        C(S,d) \subset \RR^d
    \]
    on $S$ to be the convex hull of the set $\nu(S) \subset \RR^d$. 
\end{defi}

For any subset $I \subset S$ of cardinality $d+1$, the convex hull of
$\nu(I)$ is a $d$-simplex in $\RR^d$, in other words, the points in
$\nu(S)$ are in general position. This holds, since the determinant
of the van-der-Monde matrix, with rows given by the affine coordinate vectors
\[
    (1, \nu(i)) = (1,\nu(i)_1, \nu(i)_2, ..., \nu(i)_d) 
\]
of the points $\nu(i)$, $i \in M$, is nonzero. In other words, no $d+1$
vertices of $C(S,d)$ lie on an affine hyperplane. This implies that $C(S,d)$ is
a simplicial polytope, i.e. all boundary faces are simplices. 

\begin{rem}
    \label{rem:projection} It is a key feature that cyclic polytopes on the
    same vertex set in different dimensions are related by projection maps: The
    map
    \[
        \pi: \RR^d \to \RR^{d-1}, (t_1, ..., t_d) \mapsto (t_1, ..., t_{d-1})
    \]
    maps $C(S,d)$ onto $C(S,d-1)$ (cf. Figure \ref{fig:cyclicseries}).
\end{rem}

We will typically study the cyclic polytope on the set
\[
    S = [n] = \{0,1,...,n\} \subset \RR
\]
for varying $n \ge 0$. The cyclic polytopes $C([n],d)$ are simplicial
polytopes, i.e. their boundary is a simplicial complex. We introduce some
terminology and notation to speak about simplicial complexes.

\begin{defi}
    \label{defi:abstract}
    Given a set $S$, we define an {\em abstract simplicial complex $K$} on $S$ to
    be a subset of the set $\P^*(S)$ of nonempty finite subsets of $S$ satisfying the
    condition:
    \begin{itemize}
        \item if $\sigma \in K$ and $\emptyset \neq \tau \subset \sigma$, then $\tau \in K$.
    \end{itemize}
    For any arbitrary subset $K \subset \P^*(S)$ we define $\langle K
    \rangle \subset \P^*(S)$ to be the smallest abstract simplicial complex
    containing $K$, obtained by adjoining all nonempty subsets of the sets in
    $K$. 
\end{defi}

Let $I \subset [n]$ be a subset of cardinality $d$. We discuss a simple
criterion to decide whether the $(d-1)$-simplex $|\Delta^I|$ given by the
convex hull of $\nu(I)$ is a boundary simplex of $C([n],d)$. Namely, this will
be the case, if all points in $\nu([n] \setminus I)$ lie on the same side of
the affine hyperplane $H_I \subset \RR^d$ spanned by $\nu(I)$. We derive a
beautiful criterion for testing this. First, note that the projection $\pi$
determines a canonical labelling of the two sides of the hyperplane $H_I$ as
follows. Consider the fiber of $\pi$ over a point $x \in \RR^{d-1}$, i.e. a
line of the form 
\[
    L_x = \{x\} \times \RR \subset \RR^d.
\]
Then $L_x$ intersects $H_I$ in precisely one point $(x,t_0)$ so that $\RR^d
\setminus H_I$ splits into an {\em upper half $H_I^{+}$} containing points
$(x,t)$ with $t>t_0$ and a {\em lower half } $H_I^-$ containing points $(x,t)$
with $t < t_0$.  
To determine on which side of $H_I$ a given point $y \in \RR^d$ lies, we form the
van-der-Monde matrix $V$ with rows 
\[
    (1,\nu(i)), \quad i \in I,
\]
ordered according to the linear order of $I$, along with the last row $(1,y)$. Then we have
\begin{enumerate}[label=(\alph *)]
    \item $\det(V) > 0$ iff $y \in H_I^+$,
    \item $\det(V) = 0$ iff $y \in H_I$,
    \item $\det(V) < 0$ iff $y \in H_I^-$.
\end{enumerate}
If $y \in H_I^+$ (resp. $y \in H_I^-$), we say $y$ {\em lies above (resp.
below) $H_I$}. Finally, combining this observation with the fact that the
determinant changes sign when swapping two rows, we arrive at {\em Gale's
eveness criterion} \cite{gale:eveness}:
\begin{enumerate}
    \item a point $\nu(j) \in \RR^d$ corresponding to a {\em gap } $j
        \in [n] \setminus I$ lies above (resp. below) $H_I$ iff the cardinality
        of the set
        \[
            \{i \in I\; | \; i > j \}
        \]
        is even (resp. odd). In this case, we refer to the gap $j$
        itself as {\em even} (resp. {\em odd}). 
    \item the simplex $|\Delta^I|$ is a boundary simplex if 
        \begin{enumerate}
            \item either all gaps $j \in [n] \setminus I$ are even. In this case, we have $C([n],d) \subset H_I \cup H_I^+$,
            \item or all gaps $j \in [n] \setminus I$ are odd. In this case, we have $C([n],d)\subset H_I \cup H_I^-$.
        \end{enumerate}
\end{enumerate}

Therefore, we observe that, due to the canonical notions of {\em above} and
{\em below} derived from the projection map $\pi$, the boundary of the cyclic
polytope $C([n],d)$ decomposes into an upper and lower hemisphere. 

\begin{defi}
\label{def:gale_eveness}
    Let $n \ge 0$. A subset $I \subset [n]$ is called {\em even} (resp. {\em
    odd}) if, for every $j \in [n] \setminus I$, the cardinality of the set
    \[
        \{i \in I \; | \; i > j \}
    \]
    is even (resp. odd). We further introduce the abstract simplicial complexes on $[n]$
    \begin{align*}
        \L([n],d-1) & := \langle J \subset [n] | \text{$J$ even of cardinality $d$} \rangle \subset \P^*([n]) \\
        \U([n],d-1) & := \langle J \subset [n] | \text{$J$ odd of cardinality $d$} \rangle \subset \P^*([n]),
    \end{align*}
    called the {\em lower} and {\em upper boundary} of $C([n],d)$, where we use
    the notation $\langle K \rangle$ from Definition \ref{defi:abstract}.
\end{defi}

For an abstract simplicial complex $K$ on $[n]$, we denote by $|K| \subset
\RR^d$ its $d$-dimensional geometric realization, given as the union of all
convex hulls of the sets $\nu(I)$ for $I \in K$. Typically, we will only
consider the geometric realization when $|K| \subset \RR^d$ is a {\em
geometric simplicial complex}, i.e., any given pair of geometric simplices is
disjoint or intersects in another geometric simplex. Combining the above
observations, we arrive at the following statements. 

\begin{prop}
    \label{prop:features}
    Let $n,d \ge 1$.
    \begin{enumerate}
        \item The geometric realizations $|\L([n],d-1)| \subset \RR^d$ and
            $|\U([n],d-1)| \subset \RR^d$ are geometric simplicial complexes. 
        \item The cyclic polytope $C([n],d)$ is a simplicial polytope with
            boundary given by
            \[
               \partial C([n],d) = |\L([n],d-1)| \cup |\U([n],d-1)| \subset \RR^d
            \]
        \item\label{it:3} The restrictions of the projection map $\pi: \RR^d \to \RR^{d-1}$
            provide homeomorphisms
            \[
                |\U([n],d-1)| \overset{\cong}{\lra} C([n],d-1)
            \]
            and
            \[
                |\L([n],d-1)| \overset{\cong}{\lra} C([n],d-1).
            \]
        \item Under the homeomorphisms from \ref{it:3}, the geometric
            simplicial complexes get identified with triangulations of
            $C([n],d-1)$, again given by the abstract simplicial complexes
            $\L([n],d-1)$ and $\U([n],d-1)$ but geometrically realized in
            $\RR^{d-1}$.     
    \end{enumerate}
\end{prop}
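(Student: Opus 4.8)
\emph{Proof sketch.} The statement is essentially a synthesis of Gale's evenness criterion with the single additional fact that the projection $\pi$ is injective on every facet of $C([n],d)$, which I would establish first. By general position each facet is a $(d-1)$-simplex $|\Delta^I|$ with $|I|=d$, and its vertices $\nu(i)$, $i\in I$, are sent by $\pi$ to the $d$ points $\nu'(i)=(i,i^2,\ldots,i^{d-1})$ on the moment curve in $\RR^{d-1}$, which are affinely independent (a van-der-Monde determinant, one dimension lower than in Section~\ref{sec:cyclic_polytopes}) and hence affinely span $\RR^{d-1}$. Thus $\pi$ maps the hyperplane $H_I$ isomorphically onto $\RR^{d-1}$; equivalently $H_I$ contains no vertical line, and $\pi|_{|\Delta^I|}$ is an affine isomorphism onto the $(d-1)$-simplex on $\nu'(I)$. (I assume throughout that $n\ge d$, so that $C([n],d)$ is full-dimensional.)

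Granting this, parts (1) and (2) are immediate. General position makes $C([n],d)$ a simplicial polytope, and Gale's criterion identifies its facets as exactly the $|\Delta^I|$, $|I|=d$, with $I$ even --- equivalently $C([n],d)\subseteq H_I\cup H_I^{+}$ --- together with those with $I$ odd, i.e.\ $C([n],d)\subseteq H_I\cup H_I^{-}$. Since $\partial C([n],d)$ is the union of its facets and the faces of $|\Delta^I|$ are the $|\Delta^J|$ for $\emptyset\ne J\subseteq I$, this yields $\partial C([n],d)=|\L([n],d-1)|\cup|\U([n],d-1)|$; and $\L([n],d-1)$, $\U([n],d-1)$ are subcomplexes of the boundary complex of a simplicial polytope, hence geometric simplicial complexes.

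For part (3) I would show that $\pi$ restricts to a continuous bijection $|\L([n],d-1)|\to C([n],d-1)$ --- automatically a homeomorphism, the source being compact and the target Hausdorff --- the case of $|\U([n],d-1)|$ being symmetric (interchange ``below'' and ``above''). The image lies in $\pi(C([n],d))=C([n],d-1)$ by Remark~\ref{rem:projection}. For injectivity: if $p\neq q$ lie over the same $x$, with $q$ strictly above $p$ on the line $L_x=\{x\}\times\RR$, choose an even $I$ with $q\in|\Delta^I|$; since $H_I$ contains no vertical line, $L_x\cap H_I=\{q\}$, so the part of $L_x$ strictly below $q$ lies in the open lower half-space $H_I^{-}$, which is disjoint from $C([n],d)\subseteq H_I\cup H_I^{+}$, contradicting $p\in C([n],d)$. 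For surjectivity, the image being compact hence closed, it suffices to hit the interior of $C([n],d-1)$; and $\pi$ carries the interior of $C([n],d)$ onto the interior of $C([n],d-1)$, so over an interior point $x$ the fibre $L_x\cap C([n],d)$ meets the interior of $C([n],d)$. Its lowest point $a$ then lies on $\partial C([n],d)$, hence on some facet $|\Delta^I|$, with interior points of $C([n],d)$ strictly above it on $L_x$; those lie in $H_I^{+}$, forcing $C([n],d)\subseteq H_I\cup H_I^{+}$, i.e.\ $I$ is even and $a\in|\L([n],d-1)|$ with $\pi(a)=x$.

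Part (4) then follows formally: each even facet $|\Delta^I|$ maps under $\pi$ isomorphically onto the $(d-1)$-simplex on $\nu'(I)$ by the first step, and since $\pi$ is injective on all of $|\L([n],d-1)|$ by (3) it commutes with intersections, so $\pi(|\Delta^I|)\cap\pi(|\Delta^{I'}|)=\pi(|\Delta^{I\cap I'}|)$ is their common face; hence the images form a geometric simplicial complex with the same face poset as $\L([n],d-1)$ and underlying space $\pi(|\L([n],d-1)|)=C([n],d-1)$, i.e.\ a triangulation of $C([n],d-1)$ modelled on $\L([n],d-1)$, and symmetrically for $\U([n],d-1)$. I expect the real work to sit in part (3): checking that $\pi$ sends interior to interior --- so that the vertical fibre over an interior point genuinely crosses $C([n],d)$ --- and that the lowest point of each such fibre lies on an even facet, i.e.\ turning the informal ``visibility from below'' picture into the clean argument above.
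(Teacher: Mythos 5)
Your proof is correct and follows essentially the route the paper itself intends (the paper states the proposition as a combination of the preceding observations, deferring details to Rambau): Gale's evenness criterion plus the fiberwise lowest/highest-point description of Remark \ref{rem:geom_upper_lower}, with the only genuinely new ingredient being the lower-dimensional van der Monde argument showing that $H_I$ contains no vertical line, which you correctly identify as the key technical point. The one assertion you lean on without proof --- that $\pi$ carries the interior of $C([n],d)$ onto the interior of $C([n],d-1)$ --- is standard for linear surjections of compact convex bodies, so nothing essential is missing.
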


\begin{rem}
    \label{rem:geom_upper_lower}
    Informally speaking, the two triangulations of $C([n],d-1)$ from
    Proposition \ref{prop:features} arise by ``looking'' at the simplicial
    boundary of the polytope $C([n],d)$ from above (resp. below), with respect
    to the direction given by the last coordinate of $\RR^d$ (cf. Figure
    \ref{fig:cyclicseries}). Putting this more formally: For every $x \in
    C([n],d-1)$, the line 
    \[
        \{x\} \times \RR \subset \RR^{d}
    \]
    intersects the polytope $C([n],d)$ in an interval $\{x\} \times [l_x,u_x]$.
    Then the union of the points $\{(x,l_x)\}$ (resp. $\{(x,u_x)\}$) forms the
    geometric realization of the simplicial complex $\L([n],d-1)$ (resp.
    $\U([n],d-1)$).
\end{rem}

\begin{exa}
    \label{exa:pachner}
    Note that we have $C([n],n) = |\Delta^n|$. Thus in this case, Proposition
    \ref{prop:features} yields a decomposition
    \[
        \partial |\Delta^n| = |\L([n],n-1)| \cup |\U([n],n-1)| 
    \]
    of the boundary of the $n$-simplex where $\L([n],n-1)$ and
    $\U([n],n-1)$ correspond to the union of the faces opposite to vertices of
    the same parity. These give rise to two different triangulations of the
    polytope $C([n], n-1)$ depicted for $n=4$ in Figure \ref{fig:3-2}.
\end{exa}

\begin{figure}
\begin{centering}

\begin{tikzpicture}[line join=bevel,z=-5.5]

\node (A) at (8,1,0) {$C([5],3)$};
\node (B) at (8,-3,0) {$C([5],2)$};
\node (C) at (8,-6.5,0) {$C([5],1)$};

\draw[shorten >=0.5cm, shorten <=.5cm, ->] (A) -- (B) node[midway,right] {$\pi$};
\draw[shorten >=0.5cm, shorten <=.5cm, ->] (B) -- (C) node[midway,right] {$\pi$};

\tiny
\begin{scope}[shift={(0,0,0)}]

\coordinate[label=below:{$0$}] (A1) at (0,2,0);
\coordinate[label=below:{$1$}] (A2) at (1,1,0);
\coordinate[label=below:{$2$}] (A3) at (2,0.5,0);
\coordinate[label=below:{$3$}] (A4) at (3,0.5,0);
\coordinate[label=below:{$4$}] (A5) at (4,1,0);
\coordinate[label=below:{$5$}] (A6) at (5,2,0);

\draw [fill opacity=0.8,fill=black!20] (A1) -- (A2) -- (A6) -- cycle;
\draw [fill opacity=0.8,fill=black!40] (A2) -- (A3) -- (A6) -- cycle;
\draw [fill opacity=0.8,fill=black!60] (A3) -- (A4) -- (A6) -- cycle;
\draw [fill opacity=0.8,fill=black!80] (A4) -- (A5) -- (A6) -- cycle;


\begin{scope}[shift={(-2,0,0)}]
\draw[->] (0,0,0) -- (1,0,0) node[anchor=west] {$x$};
\draw[->] (0,0,0) -- (0,1,0) node[anchor=south] {$y$};
\draw[->] (0,0,0) -- (0,0,1) node[anchor=north] {$z$};
\end{scope}

\draw (A1) -- (A2) -- (A6) -- cycle;  
\draw (A2) -- (A3) -- (A6) -- cycle;
\draw (A3) -- (A4) -- (A6) -- cycle;
\draw (A4) -- (A5) -- (A6) -- cycle;

\draw [dashed] (A1) -- (A5);  
\draw [dashed] (A1) -- (A4);  
\draw [dashed] (A1) -- (A3);  

\end{scope}

\begin{scope}[shift={(0,-4,0)}]

\coordinate[label=below:{$0$}] (A1) at (0,2,0);
\coordinate[label=below:{$1$}] (A2) at (1,1,0);
\coordinate[label=below:{$2$}] (A3) at (2,0.5,0);
\coordinate[label=below:{$3$}] (A4) at (3,0.5,0);
\coordinate[label=below:{$4$}] (A5) at (4,1,0);
\coordinate[label=below:{$5$}] (A6) at (5,2,0);

\begin{scope}[shift={(-2,0,0)}]

\draw[->] (0,0,0) -- (1,0,0) node[anchor=west] {$x$};
\draw[->] (0,0,0) -- (0,1,0) node[anchor=south] {$y$};

\end{scope}

\draw [fill opacity=0.7,fill=black!20] (A1) -- (A2) -- (A3) -- (A4) -- (A5) -- (A6) -- cycle;  

\end{scope}

\begin{scope}[shift={(0,-6.5,0)}]

\coordinate[label=below:{$0$}] (A1) at (0,0,0);
\coordinate[label=below:{$1$}] (A2) at (1,0,0);
\coordinate[label=below:{$2$}] (A3) at (2,0,0);
\coordinate[label=below:{$3$}] (A4) at (3,0,0);
\coordinate[label=below:{$4$}] (A5) at (4,0,0);
\coordinate[label=below:{$5$}] (A6) at (5,0,0);

\begin{scope}[shift={(-2,0,0)}]
\draw[->] (0,0,0) -- (1,0,0) node[anchor=west] {$x$};
\end{scope}

\draw (A1) -- (A2) -- (A3) -- (A4) -- (A5) -- (A6);  

\end{scope}

\end{tikzpicture}

\end{centering}
    \caption{The cyclic polytopes $C([5],3)$, $C([5],2)$, and $C([5],1)$, related via the projection maps $\pi$.}
    \label{fig:cyclicseries}
\end{figure}
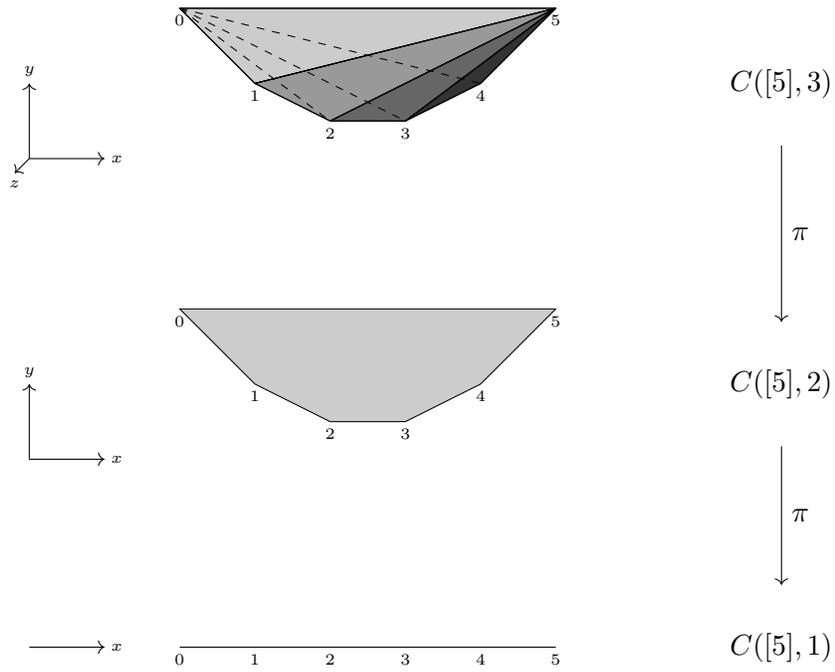

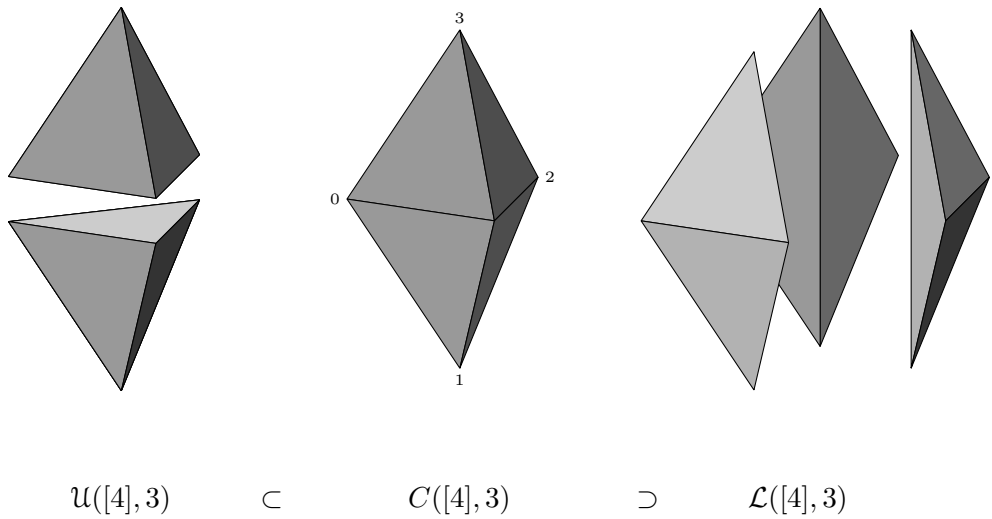
\begin{figure}
    \centering
    \begin{tikzpicture}[line join=bevel,z=-5.5]

         \node (A) at (0,-4,0) {$\U([4],3)$};
         \node (AB) at (2,-4,0) {$\subset$};
         \node (B) at (4.5,-4,0) {$C([4],3)$};
         \node (BC) at (7,-4,0) {$\supset$};
         \node (C) at (9,-4,0) {$\L([4],3)$};

        \tiny

        \begin{scope}[scale=1.5]
            \begin{scope}[shift={(0,0.2,0)}]
            \coordinate (A1) at (0.5,0,-1);
            \coordinate (A2) at (-1,0,0);
            \coordinate (A3) at (.5,0,1);
            \coordinate (N) at (0,1.5,0);
            \draw [dashed] (A1) -- (A2);
            \draw (A2) -- (A3) -- (N) -- cycle;
            \draw (A3) -- (A1) -- (N) -- cycle;
            \draw [fill opacity=0.7,fill=black!40] (A2) -- (A3) -- (N) -- cycle;
            \draw [fill opacity=0.7,fill=black!70] (A3) -- (A1) -- (N) -- cycle;
            \end{scope}

            \begin{scope}[shift={(0,-0.2,0)}]
            \coordinate (A1) at (0.5,0,-1);
            \coordinate (A2) at (-1,0,0);
            \coordinate (A3) at (.5,0,1);
            \coordinate (S) at (0,-1.5,0);
            \draw (A2) -- (S) -- (A1) -- cycle;
            \draw (A3) -- (S) -- (A2) -- cycle;
            \draw (A1) -- (S) -- (A3) -- cycle;
            \draw (A1) -- (A2) -- (A3) -- cycle;
            \draw [fill opacity=0.7,fill=black!20] (A1) -- (A2) -- (A3) -- cycle;
            \draw [fill opacity=0.7,fill=black!40] (A2) -- (A3) -- (S) -- cycle;
            \draw [fill opacity=0.7,fill=black!80] (A3) -- (A1) -- (S) -- cycle;
            \end{scope}

        \end{scope}

        \begin{scope}[scale=1.5, shift={(3,0,0)}]
            \coordinate[label=right:{$2$}] (A1) at (0.5,0,-1);
            \coordinate[label=left:{$0$}] (A2) at (-1,0,0);
            \coordinate[label=right:{$4$}] (A3) at (.5,0,1);
            \coordinate[label={$3$}] (N) at (0,1.5,0);
            \coordinate[label=below:{$1$}] (S) at (0,-1.5,0);
            \draw [dashed] (A1) -- (A2);
            \draw [fill opacity=0.7,fill=black!40] (A2) -- (A3) -- (N) -- cycle;
            \draw [fill opacity=0.7,fill=black!70] (A3) -- (A1) -- (N) -- cycle;
            \draw [fill opacity=0.7,fill=black!40] (A2) -- (A3) -- (S) -- cycle;
            \draw [fill opacity=0.7,fill=black!70] (A3) -- (A1) -- (S) -- cycle;
         \end{scope}

        \begin{scope}[scale=1.5, shift={(6,0,0)}]

            \begin{scope}[shift={(0,0,-1)}]
            \coordinate (A1) at (0.5,0,-1);
            \coordinate (A2) at (-1,0,0);
            \coordinate (A3) at (.5,0,1);
            \coordinate (N)  at (0,1.5,0);
            \coordinate (S)  at (0,-1.5,0);
            \draw [dashed] (A1) -- (A2); 
            \draw [fill opacity=0.8,fill=black!60](N) -- (S) -- (A1) -- cycle;
            \draw [fill opacity=0.8,fill=black!40](N) -- (S) -- (A2) -- cycle;
            \end{scope}

            \begin{scope}[shift={(1,0,0)}]
            \coordinate (A1) at (0.5,0,-1);
            \coordinate (A2) at (-1,0,0);
            \coordinate (A3) at (.5,0,1);
            \coordinate (N)  at (0,1.5,0);
            \coordinate (S)  at (0,-1.5,0);
            \draw [fill opacity=0.8,fill=black!30] (N) -- (S) -- (A3) -- cycle; 
            \draw [fill opacity=0.8,fill=black!60] (A3) -- (A1) -- (N) -- cycle;
            \draw [fill opacity=0.8,fill=black!80] (A3) -- (A1) -- (S) -- cycle;
            \end{scope}

            \begin{scope}[shift={(-0.2,0,+1.0)}]
            \coordinate (A1) at (0.5,0,-1);
            \coordinate (A2) at (-1,0,0);
            \coordinate (A3) at (.5,0,1);
            \coordinate (N)  at (0,1.5,0);
            \coordinate (S)  at (0,-1.5,0);
            \draw [dashed] (N) -- (S);
            \draw [fill opacity=0.8,fill=black!20] (A3) -- (A2) -- (N) -- cycle;
            \draw [fill opacity=0.8,fill=black!30] (A3) -- (A2) -- (S) -- cycle;
            \end{scope}

        \end{scope}

    \end{tikzpicture}
    \caption{The two triangulations of $C([4],3)$ corresponding to the upper
    boundary $\U([4],3)$ and lower boundary $\L([4],3)$ of $|\Delta^4|$,
    respectively.}%
    \label{fig:3-2}
\end{figure}

Of crucial relevance for much of what follows will be a partial ordering on the
collection of $d$-simplices contained in the cyclic polytope $C([n],d)$ which we will
now discuss. For $n \ge d \ge 1$, consider the set $S([n],d) \subset \P([n])$
of subsets of cardinality $d+1$. For $I \in S([n],d)$ we denote 
\[
    \Delta^I := \langle \{I\} \rangle \subset \P^*([n]).
\]
The geometric realization $|\Delta^I| = C(I,d)$ in $\RR^d$ can be viewed as a
subsimplex of the cyclic polytope $C([n],d)$. As explained in Example
\ref{exa:pachner}, the boundary of $|\Delta^I|$ decomposes into lower and upper
hemisphere
\[
   \partial |\Delta^I| = |(\Delta^I)^-| \cup |(\Delta^I)^+|
\]
comprised of the even and odd faces of $\Delta^I$, respectively.
For $I,J \in S([n],d)$ with $|I \cup J| = d+2$, we declare $I \prec J$ if
\[
    I \cap J \in (\Delta^I)^+ \cap (\Delta^J)^-.
\]
Note that this means, that the affine hyperplane $H$ spanned by $I \cap J$
intersects $|\Delta^I|$ and $|\Delta^J|$ in the common face $F = |\Delta^{I \cap
J}|$ with $|\Delta^I| \setminus F$ lying below and $|\Delta^J| \setminus F$
lying above $H$. Within this context, we have the following important result which we refer to
as Rambau's lemma (cf. \cite[Corollary 5.9]{rambau}): 

\begin{lem}
    \label{lem:rambau}
    The transitive closure of the relation $\preceq$ on the set $S([n],d)$ defines a
    partial order. In particular, there are no cycles of simplices related by $\prec$.
\end{lem}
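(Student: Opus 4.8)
The plan is to build a height function that is strictly monotone along $\prec$, which immediately rules out cycles and gives antisymmetry of the transitive closure. The natural candidate comes from the projection picture set up in Proposition~\ref{prop:features} and Remark~\ref{rem:geom_upper_lower}: each $d$-simplex $|\Delta^I|\subset C([n],d)$ is, away from a measure-zero set, the graph of a piecewise-linear function over its image $\pi(|\Delta^I|)\subset\RR^{d-1}$, since $\nu(I)$ is in general position so no fiber $L_x$ is contained in $H_I$. For two simplices $I,J$ with $|I\cup J|=d+2$ and $I\prec J$, the defining condition says that along the shared facet $F=|\Delta^{I\cap J}|$ the simplex $|\Delta^I|$ lies below the spanning hyperplane $H$ and $|\Delta^J|$ lies above it. First I would make precise the local statement that, on a neighborhood of (the interior of) $\pi(F)$ inside $\pi(|\Delta^I|)\cap\pi(|\Delta^J|)$, the graph of $J$ lies strictly above the graph of $I$ in the last coordinate; this is exactly the content of the $(\Delta^I)^+\cap(\Delta^J)^-$ condition unwound via Gale's criterion and the $\det(V)\gtrless 0$ dictionary.

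Next I would promote this to a global comparison. Consider the ``fiberwise maximum'' function: for a finite set $\Sigma$ of $d$-simplices in $C([n],d)$ whose union of projections covers a fixed region, define $f_\Sigma(x)=\max\{\,h : (x,h)\in |\Delta^I|,\ I\in\Sigma\,\}$. The key point is that if we have a chain $I_0\prec I_1\prec\cdots\prec I_k$, then by the local step each consecutive replacement strictly raises the graph over a neighborhood of the shared facet, while the common refinement of all the projections $\pi(|\Delta^{I_m}|)$ partitions the overlap region into cells on which the relevant graphs are linear. Tracking the fiberwise position over such a cell, I expect to extract a real-valued invariant $\Phi(I)$ — for instance, an appropriately weighted integral $\int_{\pi(|\Delta^I|)} h_I(x)\,dx$ of the graph height, or more robustly the sum over all $(d-1)$-subsets of $[n]$ of signed contributions measuring on which side of each hyperplane the simplex sits — with the property that $I\prec J\implies \Phi(I)<\Phi(J)$. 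Any strictly $\prec$-monotone $\Phi$ forbids cycles, and then the transitive closure is a strict partial order, so $\preceq$ is a (non-strict) partial order.

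The main obstacle is the global step: a single flip $I\prec J$ only controls the graphs near one facet, and a priori the two simplices could cross elsewhere or their projections could differ, so a naive ``graph of $J$ lies entirely above graph of $I$'' statement is false. One must therefore either (i) restrict attention to flips that are genuinely local in the sense of Rambau — i.e. realize them as \emph{bistellar} / Pachner moves inside a triangulation, where $I$ and $J$ together with the facets they share cut out a subpolytope on which the comparison is clean — and argue that a $\prec$-cycle would force such a local configuration to be inconsistent; or (ii) invoke the full strength of Rambau's analysis of the higher Stasheff–Tamari orders on triangulations of cyclic polytopes, in which this monotone height (the second higher Stasheff–Tamari order, detected by ``volume below the triangulation'') is constructed precisely to be a linear extension witness. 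I would structure the write-up around option (ii), citing \cite[Corollary~5.9]{rambau} for the combinatorial heart and supplying the geometric translation above — the van-der-Monde sign computation, the upper/lower hemisphere decomposition, and the fiberwise-maximum interpretation — as the bridge from the stated relation $\prec$ on $S([n],d)$ to Rambau's framework. The remaining routine verifications (transitivity is automatic after taking transitive closure; reflexivity is by convention; antisymmetry follows from acyclicity) are then immediate.
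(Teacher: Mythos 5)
Your overall strategy --- construct a strictly $\prec$-monotone invariant $\Phi$ on $S([n],d)$, which immediately forbids cycles and makes the transitive closure a partial order --- is exactly the right one, and it is the strategy the paper uses. But the proposal never actually produces such a $\Phi$. The geometric candidate you lead with, the integrated graph height $\int_{\pi(|\Delta^I|)} h_I(x)\,dx$, is not shown to be monotone, and you yourself identify why it should not be expected to be: the relation $I \prec J$ only compares the two simplices \emph{locally} along the shared facet $|\Delta^{I\cap J}|$, their projections to $\RR^{d-1}$ are in general different regions, and nothing prevents the graphs from crossing away from that facet. Having named this obstacle, the proposal does not overcome it; option (i) is left as a sketch of a sketch, and option (ii) is to cite \cite[Corollary~5.9]{rambau}, which is the statement being proved. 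So as written there is a genuine gap: the monotone witness is asserted to exist but never constructed.

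The fix is much more elementary than the geometric machinery you set up, and it is essentially the ``sum over $(d-1)$-subsets of signed contributions'' idea you mention in passing but do not develop --- except that one should record the data per \emph{vertex} rather than per hyperplane, and order it lexicographically rather than additively. Concretely, the paper sends $I$ to the word $\epsilon(I) \in \{o,*,e\}^{n+1}$ whose $k$-th letter records whether $k$ is an odd gap of $I$, an element of $I$, or an even gap of $I$, and orders words lexicographically via $o < * < e$. If $I \prec J$ with $|I \cup J| = d+2$, the two words agree up to the smallest index where $I$ and $J$ differ, and at that index the Gale parity bookkeeping (your $\det(V) \gtrless 0$ dictionary) forces $\epsilon(I)$ to precede $\epsilon(J)$; this is a finite, purely combinatorial check. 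A lexicographic order is total, so the order-preserving map $\epsilon$ rules out cycles and the lemma follows. Your geometric picture of ``$J$ sits above $I$ along the common facet'' is the correct intuition for why this check succeeds, but the proof needs the explicit combinatorial invariant, not an appeal to Rambau.
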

\begin{proof}
    To prove the statement, we construct an order preserving map from
    $S([n],d)$ into a totally ordered set (which implies the claim). Namely,
    define the map
    \[
        \epsilon: S([n],d) \to \{o,*,e\}^{n+1}, I \mapsto (\epsilon_0, \epsilon_1, ..., \epsilon_n)
    \]
    where 
    \[
        \epsilon_k = \begin{cases}
            o & \text{if $k$ is an odd gap of $I$,}\\
            * & \text{if $k \in I$,}\\
            e & \text{if $k$ is an even gap of $I$.}
        \end{cases}
    \]
    Interpreting the elements of $\{o,*,e\}^{n+1}$ as words of length $n+1$ in
    the alphabet $\{o,*,e\}$, we equip it with the lexicographic order induced
    by $o < * < e$. It is then immediate to verify that the map $\epsilon$ is
    order preserving. 
\end{proof}

\section{Higher Segal objects}%
\label{sec:higher_segal_objects}

We have now collected the necessary preliminaries to define the objects of
interest in this article: the higher Segal objects. We will freely use the
language of $\infty$-categories with \cite{lurie:htt} as a standard reference.
Let $\C$ be an $\infty$-category and let 
\[
    X: \Delta^{\op} \to \C
\]
be a simplicial object in $\C$. 

\begin{rem}
    \label{rem:custom}
    A (coherent) diagram valued in an $\infty$-category $\C$ indexed by an
    ordinary category $I$ is formally defined as a functor of
    $\infty$-categories (i.e. a map of simplicial sets) $\N(I) \to \C$.
    To avoid clumsy notation, we will often leave the passage to the nerve
    implicit and simply write $I \to \C$ instead.
\end{rem}

\begin{rem}
    \label{rem:convenient}
    It will be (notationally) convenient to enlarge the parametrizing category
    $\Delta$ slightly as follows: Let $\widetilde{\Delta}$ denote the category of
    finite nonempty subsets of $\NN$, equipped with the linear order induced from
    $\NN$, and weakly monotone maps. There is a canonical functor given on objects
    by
    \begin{equation}
        \label{eq:nonstandard}
        p: \widetilde{\Delta} \to \Delta, I = \{i_0 < i_1 < ... < i_n\}  \mapsto [n].
    \end{equation}
    We may thus extend the domain of the simplicial object $X$ from $\Delta$ to
    $\widetilde{\Delta}$ by pulling back along $p$. Typically, this will be left
    implicit and we simply write $X_I$ to denote $X_{p(I)}$. 
\end{rem}

\begin{con}
    \label{con:descent}
    Let $X: \Delta^\op \to \C$ be a simplicial object valued in an
    $\infty$-category $\C$, let $n \ge 0$ and let $K \subset \P^*([n])$ be an
    abstract simplicial complex on the set $[n]$. We may consider $K$ as a
    poset with respect to inclusion of subsets and form the poset $K^{\rhd}$ by
    adjoining a maximal element to $K$. We obtain functors
    \[
        K^{\rhd} \overset{q}{\to} \widetilde{\Delta} \overset{p}{\to} \Delta
    \]
    where $p$ is the functor from \eqref{eq:nonstandard} and $q$ is defined via
    $I \mapsto I$ on $K$ while the maximal element of $K^{\rhd}$ is mapped to
    $[n]$. Pulling back the simplicial object $X$ along $p \circ q$, we 
    obtain a cone
    \begin{equation}
        \label{eq:Kcone}
        (K^{\rhd})^{\op} \to \C
    \end{equation}
    over the base diagram $X|K^{\op}$ with tip $X_n$. Therefore, assuming that the limit 
    \[
        X_K := \lim X|K^{\op}
    \]
    exists in $\C$, we obtain a canonical map
    \begin{equation}
        \label{eq:limitmap}
        X_n \lra X_{K}.
    \end{equation}
\end{con}

\begin{defi}
    \label{defi:higher_segal}
    Let $\C$ be an $\infty$-category with finite limits and let $X: \Delta^{\op} \to \C$ be a
    simplicial object in $\C$. Let $d \ge 1$. Then $X$ is called 
    \begin{enumerate}
        \item\label{it:lower} lower $d$-Segal if, for every $n > d$, the map $X_n \to X_{\L([n],d)}$ from \eqref{eq:limitmap} for $K = \L([n],d)$ is an equivalence in $\C$.
        \item\label{it:upper} upper $d$-Segal if, for every $n > d$, the map $X_n \to X_{\U([n],d)}$ from \eqref{eq:limitmap} for $K = \U([n],d)$ is an equivalence in $\C$.
    \end{enumerate}
    We refer to the maps in \ref{it:lower} and \ref{it:upper} as {\em
    higher Segal maps} and to the simplicial objects satisfying them as
    {\em higher Segal objects in $\C$}. If $X$ is both lower and upper
    $d$-Segal then we say $X$ is {\em $d$-Segal}.
\end{defi}

\begin{rem}
    \label{rem:practice} 
    The formulation of Definition \ref{defi:higher_segal} in terms of
    $\infty$-categories subsumes several previous formulations (cf. \cite{DK12}):
    \begin{enumerate}
        \item When $\C$ is the nerve of an ordinary category,
            then the limit cones simply become limit cones in the
            sense of ordinary category theory. 
        \item When $\C$ is the $\infty$-categorical localization of a model
            category $C$, and $X$ arises from a strict simplicial object in
            $C$, then the higher Segal conditions can be expressed in terms of
            homotopy limits instead of $\infty$-categorical limits (cf.
            \cite{lurie:htt}).  
    \end{enumerate}
\end{rem}

\begin{exa}
\label{exa:lower_dim}
    Let $X: \Delta^{\op} \to \C$ be a simplicial object in an $\infty$-category
    $\C$. We give some examples of Segal conditions in low dimensions:
    \begin{enumerate}
        \item $X$ is lower $1$-Segal if, for every $n \ge 2$, the map
            \[
            \begin{tikzcd}
                X_n \ar[r] &  X_{\{0,1\}} \times_{X_{\{1\}}} X_{\{1,2\}} \times_{X_{\{2\}}}  \cdots\times_{X_{\{n-1\}}} X_{\{n-1,n\}} 
            \end{tikzcd}
            \]
            is an equivalence in $\C$. Therefore, lower $1$-Segal objects
            are the familiar {\em Segal objects}.

        \item $X$ is upper $1$-Segal if, for every $n \ge 2$, the map
            \[
                p_n: X_n \to X_{\{0,n\}}
            \]
            is an equivalence in $\C$. We claim that $X$ is essentially
            constant, i.e. equivalent to the pullback of a single object in
            $\C$ along the constant functor $\Delta \to *$.  

            To show this, observe first that, by two-out-of-three applied to
            the diagram 
            \[
            \begin{tikzcd}
                X_2  \ar[swap]{d}{d_1} & X_1 \ar[swap]{l}{s_0}\ar{dl}{\id} \\
                X_1,  &  
            \end{tikzcd}
            \]
            the degeneracy map $s_0$ is an equivalence (since $d_1 = p_2$ is an
            equivalence by assumption). Further, the degeneracy map $X_0 \to
            X_1$ is a retract of $s_0: X_1 \to X_2$ so that it is an
            equivalence as well. Thus, the two face maps $X_1 \to X_0$ are
            equivalences since they are sections of the degeneracy $X_0 \to
            X_1$. Finally, by composing this equivalence with the equivalence
            $p_n$, we obtain that, for every $n \ge 1$, the map
            \[
                X_n \to X_{\{0\}} = X_0
            \]
            corresponding to the inclusion of $\{0\}$ in $[n]$ is an
            equivalence. But this finally implies that the total degeneracy map
            \[
                X_0 \to X_n
            \]
            corresponding to the unique map $[n] \to [0]$ is an equivalence. A
            further application of two-out-of-three implies that $X$ maps
            all morphisms in $\Delta$ to equivalences in $\C$. The claim now
            follows, since the geometric realization of $\Delta$
            (i.e. the $\infty$-categorical localization along all edges) is
            contractible.

        \item The generating simplices of $\L([n],2)$ are $\{0,1,2\},\{0,2,3\},
            \ldots, \{0,n-1,n\}$ so that, by a basic cofinality argument, we
            observe that $X$ is lower $2$-Segal if and only if, for every $n
            \ge 3$, the map
            \[
                X_n \to X_{\{0,1,2\}} \times_{X_{\{0,2\}}} X_{\{0,2,3\}} \times_{X_{\{0,3\}}} \cdots \times_{X_{\{0,n-1\}}} X_{\{0,n-1,n\}}
            \]
            is an equivalence, i.e. if and only if the simplicial object $P^{\lhd}X$ 
            obtained from $X$ by pulling back along the join functor
            \[
                \Delta \lra \Delta, [n] \mapsto [0] \ast [n]
            \]
            is lower $1$-Segal. Similarly, $X$ is upper $2$-Segal if and only if the
            pullback $P^{\rhd}X$ of $X$ along the join functor
            \[
                \Delta \lra \Delta, [n] \mapsto [n] \ast [0]
            \]
            is lower $1$-Segal. By the path space criterion of \cite{DK12,gkt} it
            follows, that $X$ is $2$-Segal in the sense of \cite{DK12} if and
            only if $X$ is lower and upper $2$-Segal in the sense of Definition
            \ref{defi:higher_segal}.

        \item The first lower and upper $3$-Segal conditions are equivalent to the maps
            \begin{equation}
                \label{eq:first_3segal}
                X_4 \to 
                \lim \left\{
                \begin{tikzcd}
                    X_{\{0,1,2,3\}}  \ar{rr}\ar{dr}\ar{drr} & &  X_{\{1,2,3\}}\ar{d} & &  \ar{ll}\ar{dll} X_{\{1,2,3,4\}} \ar{dl}\\
                                                    & X_{\{0,1,3\}}  \ar{r}   & X_{\{1,3\}} & \ar{l} X_{\{1,3,4\}} &\\
                                                    & & \ar{ul} X_{\{0,1,3,4\}}\ar{ur} \ar{u} & &
                \end{tikzcd}
                \right\}
            \end{equation}
            and 
            \[
                X_4 \to X_{\{0,2,3,4\}}  \times_{X_{\{0,2,4\}}} X_{\{0,1,2,4\}}
            \]
            being equivalences, respectively (cf. Figure \ref{fig:3-2}). We may
            interpret the $3$-Segal maps 
            \[
            \begin{tikzcd}
                X_{\U([4],3)}  & \ar{l} X_4 \ar{r} & X_{\L([4],3)} 
            \end{tikzcd}
            \]
            as comprising a $(3,2)$ ``Pachner correspondence'' (cf. \S \ref{sub:coherent_pachner_moves})
            relating the
            limits over $\U([4],3)$ and $\L([4],3)$, respectively. 
    \end{enumerate}
\end{exa}

\begin{rem}
    \label{rem:fully}
    In Theorem \ref{thm:lowerupperfull}, we will show that if $X$ is $d$-Segal
    then, for {\em every} triangulation $T \subset \P^*([n])$ of $C([n],d)$, $n
    > d$, the map 
    \[
        X_n \lra X_T
    \]
    is an equivalence in $\C$.
\end{rem}

Gale's eveness criterion paired with a basic cofinality statements lead to the
following {\em path space criteria}, generalizing similar results for $d = 2$
observed independently in \cite{DK12} and \cite{gkt}.

\begin{defi}
    \label{defi:pathspaces}
    Let $\C$ be an $\infty$-category and $X: \Delta^{\op} \to \C$ a simplicial object in $\C$. 
    We denote by $\Plhd X$
    the simplicial object obtained by pulling back along the functor 
    \[
        \Delta \to \Delta, [n] \mapsto [0] \ast [n]
    \]
    where $\ast$ denotes the join. $\Plhd X$ is referred to as
    the {\em initial path space} of $X$ (for Kan complexes, it models the space
    of paths with fixed starting point).
    Analogously, we denote by $\Prhd X$
    the simplicial object obtained by pulling back along the functor 
    \[
        \Delta \to \Delta, [n] \mapsto [n] \ast [0]. 
    \]
    $\Prhd X$ is referred to as the {\em final path space} of
    $X$ (for Kan complexes, it models the space of paths with fixed endpoint). 
\end{defi}

\begin{prop}
    \label{prop:pathspace}
    Let $\C$ be an $\infty$-category, $X: \Delta^{\op} \to \C$ a simplicial object, and let $d \ge 0$. 
    \begin{enumerate}[label=(\alph*)]
        \item Assume $d$ is even. Then
            \begin{enumerate}[label=(\arabic*)]
                \item $X$ is lower $d$-Segal if and only if $\Plhd X$ is lower $(d-1)$-Segal.
                \item $X$ is upper $d$-Segal if and only if $\Prhd X$ is lower $(d-1)$-Segal.
            \end{enumerate}
        \item Assume $d$ is odd. Then the following conitions are equivalent:
            \begin{enumerate}[label=(\roman*)]
                \item $X$ is upper $d$-Segal.
                \item $\Plhd X$ is upper $(d-1)$-Segal.
                \item $\Prhd X$ is lower $(d-1)$-Segal.
            \end{enumerate}
    \end{enumerate}
\end{prop}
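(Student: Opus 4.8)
The plan is to reduce everything to a combinatorial statement about the simplicial complexes $\L$ and $\U$ that control the higher Segal conditions, namely to identify how the operations $\Plhd$ and $\Prhd$ interact with the passage from $C([n],d)$ to $C([n],d-1)$. Concretely, pulling $X$ back along the functor $[n] \mapsto [0] \ast [n]$ turns the index $[n]$ into $[n+1]$ with a distinguished new initial vertex $0$, and $(\Plhd X)_{[n]} = X_{[n+1]}$ where here $[n+1]$ really means the subset $\{0\} \cup (\text{shift of } [n])$. So the lower (resp. upper) $(d-1)$-Segal condition for $\Plhd X$ at level $n$ is precisely the statement that $X_{[n+1]} \to X_K$ is an equivalence, where $K$ is $\L$ or $\U$ of $C([n],d-1)$ realized inside the ``coning-off'' of the simplex on $\{1,\dots,n+1\}$. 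The core combinatorial input I would isolate and prove is: as an abstract simplicial complex on $[n+1] = \{0,1,\dots,n+1\}$, the cone $\{0\} \ast \L([\{1,\dots,n+1\}],d-1)$ equals $\L([n+1],d)$ when $d$ is even, and equals $\U([n+1],d)$ when $d$ is odd (and dually for $\Prhd$, replacing $\L$ by $\U$ under the reflection reversing the linear order, which swaps even and odd subsets according to the parity of the ambient cardinality). This is a direct consequence of Gale's evenness criterion: prepending a new smallest element $0$ to the ambient set does not change the set $\{i \in I \mid i > j\}$ for the old gaps, but the new element $0$, if it is a gap of $I$, counts \emph{all} of $I$, whose cardinality is $d+1$; so the parity bookkeeping shifts exactly by the parity of $d+1$.

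With that combinatorial identity in hand, the proof of each equivalence is a two-out-of-three / cofinality argument. First I would record that for a cone complex $K = \{v\} \ast L$, the limit $X_{K}$ over $K^{\op}$ can be computed by restricting along the cofinal inclusion of the relevant subposet, and that the map $X_{[n+1]} \to X_K$ factors compatibly through $X_L$ and through the ``apex'' contributions coming from $v$; here one uses that the star of $v$ in $K$ is contractible so that the extra data attached to $v$ does not change the limit — equivalently, one uses the interplay of the join functor with the Segal map already exploited in Example \ref{exa:lower_dim}(3) for the case $d = 2$. Granting this, lower $d$-Segal for $X$ at level $n+1$ (with $d$ even) says $X_{[n+1]} \to X_{\L([n+1],d)}$ is an equivalence; rewriting $\L([n+1],d) = \{0\} \ast \L(\{1,\dots,n+1\},d-1)$ and identifying the latter limit with $(\Plhd X)_{[n]} \to (\Plhd X)_{\L([n],d-1)}$ gives exactly lower $(d-1)$-Segal for $\Plhd X$ at level $n$. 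Quantifying over all $n$ yields (a)(1); part (a)(2) is the mirror-image argument with $\Prhd$ and the order-reversing involution; and in the odd case all three complexes in question ($\U([n+1],d)$, the cone on $\U(\cdot,d-1)$, the cone on $\L(\cdot,d-1)$) coincide up to the reflection, which is what produces the triple equivalence in (b).

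The main obstacle I expect is making the cofinality/contractibility step fully rigorous at the $\infty$-categorical level: one must check that adjoining the cone point $v$ to $L$ genuinely does not alter the limit, i.e. that the inclusion $L^{\op} \hookrightarrow (\{v\}\ast L)^{\op}$ — or rather the comparison between $X_{\{v\}\ast L}$ and the relevant fiber product built from $X_{[n+1]}$ — is an equivalence, and that this is compatible with the limit map from $X_{[n+1]}$. Morally this is ``the star of a vertex is a cone, hence absolute'', and it is the same mechanism that underlies the path-space criterion for $2$-Segal objects; I would either cite the $d=2$ case from \cite{DK12,gkt} and bootstrap, or give a clean cofinality lemma of the form: if $K = \langle v \ast K'\rangle$ then $X_K \simeq X_{K'}$ naturally under $X_n \to X_K$. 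Everything else is parity arithmetic via Gale's criterion and formal manipulation of the join functors, which I would not spell out in detail.
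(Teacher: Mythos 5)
Your overall strategy --- identify the hemispherical complexes in dimension $d$ with cones on hemispherical complexes in dimension $d-1$ via Gale's criterion, then transport the limit along a cofinality argument --- is exactly the route the paper intends (it offers no written proof beyond the sentence ``Gale's evenness criterion paired with basic cofinality statements''), and your treatment of the cofinality step is workable. But your key combinatorial lemma is wrong in the odd case, and the error propagates to a false conclusion. You claim that $\{0\}\ast\L(\{1,\dots,n+1\},d-1)$ equals $\U([n+1],d)$ when $d$ is odd. Take $d=3$, $n=3$: the facets of $\U([4],3)$ are $\{0,2,3,4\}$ and $\{0,1,2,4\}$, i.e.\ the cones on $\{2,3,4\}$ and $\{1,2,4\}$, which are the \emph{odd} $3$-subsets of $\{1,2,3,4\}$; the \emph{even} $3$-subsets are $\{1,2,3\}$ and $\{1,3,4\}$, whose cones $\{0,1,2,3\}$ and $\{0,1,3,4\}$ are facets of $\L([4],3)$ (and do not even exhaust it, since $\{1,2,3,4\}$ is a third facet, one not containing $0$). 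The source of the error is your parity bookkeeping: in the cone, $0$ is a \emph{vertex} of $I$, not a gap, and since $0$ lies below every gap it contributes nothing to any count $\#\{i\in I\mid i>j\}$ --- so coning at the minimum always \emph{preserves} parity. The parity of $d+1$ enters elsewhere: it decides which facets are forced to contain $0$ (if $0\notin I$, the gap $0$ sees all $d+1$ elements of $I$ above it). Hence for $d$ even every even facet contains $0$ and $\L([n+1],d)=\{0\}\ast\L(\cdot,d-1)$, while for $d$ odd every \emph{odd} facet contains $0$ and $\U([n+1],d)=\{0\}\ast\U(\cdot,d-1)$; there is no front-cone description of $\L([n+1],d)$ for $d$ odd. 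As written, your identity would prove ``$X$ upper $d$-Segal iff $\Plhd X$ \emph{lower} $(d-1)$-Segal'' for $d$ odd, contradicting part (b)(ii) of the statement, which asserts \emph{upper}.

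The corrected dictionary, which does yield the proposition, is: appending the maximum flips parity and every odd facet contains the maximum, so $\U([n+1],d)=\L([n],d-1)\ast\{n+1\}$ for \emph{every} $d$ (this gives (a)(2) and the equivalence (i)$\Leftrightarrow$(iii) in (b)); prepending the minimum preserves parity, and the front-cone description holds on the lower side for $d$ even and on the upper side for $d$ odd (giving (a)(1) and (i)$\Leftrightarrow$(ii) in (b)). With these identities in place, your cofinality argument --- restrict the limit over $K^{\op}$, $K=\{v\}\ast L$, to the subposet of faces containing $v$, which is initial because $\sigma\cup\{v\}$ is a minimum of each comma poset, and then discard the extra bottom element $\{v\}$ using weak contractibility of the triangulated ball $L$ --- goes through and finishes the proof.
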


We provide a class of examples of higher Segal spaces: Let 
\[
    \DDelta \subset \CCat
\]
denote the full $2$-subcategory of the $2$-category of small categories spanned
by the standard ordinals $[n]$, $n \ge 0$. For $0 \le k \le n$, denote by
$\DDelta([k],[n])$ the mapping category in $\DDelta$, which can be identified
with the poset of monotone $k+1$-tuples in $[n]$ with the componentwise order
induced from $[n]$.

Let $\A$ be an abelian category and define $\Snk(\A)$ to be the full subcategory
\[
    \Snk(\A) \subset \Fun(\DDelta([k],[n]), \A)
\]
consisting of those diagrams $A: \DDelta([k],[n]) \to \A$ such that
\begin{enumerate}
    \item\label{sdot:1} for every noninjective map $\tau: [k] \to [n]$, we have 
        \[
            A_{\tau} \cong 0,
        \]
    \item\label{sdot:2} for every injective map $\sigma: [k+1] \to [n]$, the sequence
        \begin{equation}
            \label{eq:ssequence}
            \begin{tikzcd}
                0 \ar{r} & A_{d_k\sigma} \ar{r} & A_{d_{k-1}\sigma} \ar{r} & ... \ar{r} & A_{d_{0}\sigma} \ar{r} & 0,
            \end{tikzcd}
        \end{equation}
        induced by the sequence
        \[
            d_k \sigma \to d_{k-1} \sigma \to  ... \to d_0 \sigma.
        \]
        in $\DDelta([k],[n])$, is exact. Note that \ref{sdot:1} along with the commutativity of the square
        \begin{equation}
            \label{eq:degenerate}
            \begin{tikzcd}
                d_i \sigma \ar{r}\ar{d} & d_{i-1}\sigma \ar{d}\\
                s_{i-2} d_{i-2} d_i\sigma\ar{r}& d_{i-2}\sigma.
            \end{tikzcd}
        \end{equation}
        in $\DDelta([k],[n])$ imply that the sequence \eqref{eq:ssequence} is indeed a complex.
\end{enumerate}

Fixing $k \ge 0$, the various categories $\Snk(\A)$, $n \ge 0$, form a strict
simplicial object
\[
    \Delta^{\op} \lra \Cat
\]
in the category of categories. Postcomposing it with the nerve $\N: \Cat \to
\sSet$ and then passing to coherent nerves $\N_{\Delta}$, we obtain a coherent diagram
\[
    \Sk_{\bullet}(\A): \N(\Delta^{\op}) \lra \Cat_{\infty}
\]
which we refer to as the $k$-dimensional Waldhausen $\Se_{\bullet}$-construction
(traditionally pronounced: ``S dot construction'') of $\A$.

\begin{thm}
    \label{thm:waldhausen}
    Let $k \ge 0$. Then the simplicial object $\Sk(\A)$ is $2k$-Segal.
\end{thm}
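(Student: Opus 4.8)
The plan is to verify the lower and upper $2k$-Segal conditions separately, using the path space criterion of Proposition~\ref{prop:pathspace} to induct on $k$ and reduce to the classical ($1$-Segal, i.e.\ Segal) case. First I would observe that for each fixed $n$, the category $\Snk(\A)$ is equivalent to a more tractable ``filtered chain complex'' type category: by conditions \ref{sdot:1} and \ref{sdot:2}, a diagram $A \in \Snk(\A)$ is determined by its values on the injective maps $[k] \to [n]$ together with exactness of the sequences \eqref{eq:ssequence}, so a genuinely combinatorial model is available. This is the analogue of the standard fact that $\Se_{\bullet}(\A)$ (the case $k=1$) at level $n$ is the category of flags of admissible monomorphisms.

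The main step is the inductive one. By Proposition~\ref{prop:pathspace}, since $2k$ is even, $\Sk(\A)$ is lower $2k$-Segal iff $\Plhd \Sk(\A)$ is lower $(2k-1)$-Segal, and again since $2k-1$ is odd, $\Plhd$ of that is (lower $(2k-2)$-Segal, among equivalent conditions). So the plan is: identify $\Plhd \Sk(\A)$ — the pullback along $[n] \mapsto [0] \ast [n] = [n+1]$ — with $\Se^{\langle k-1\rangle}_{\bullet}(\A)$, or at least with something whose $(2k-2)$-Segal property follows from the theorem for $k-1$. Concretely, restricting a diagram on $\DDelta([k],[n+1])$ to the sub-poset of tuples whose first entry is $0$ should, after accounting for conditions \ref{sdot:1}–\ref{sdot:2}, produce a diagram of the shape $\DDelta([k-1],[n])$ satisfying the analogous exactness conditions; the adjoined initial vertex ``absorbs'' one dimension of the $\Se$-construction. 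One then does the same for the upper condition via $\Prhd$ and part (b)(iii) of Proposition~\ref{prop:pathspace}. The base case $k=0$: $\Se^{\langle 0 \rangle}_{\bullet}(\A)$ should be (essentially) constant — condition \ref{sdot:1} forces everything indexed by non-injective maps $[0]\to[n]$ to vanish, but every map $[0]\to[n]$ is injective, so $\Se^{\langle 0\rangle}_n(\A) \simeq \Fun([n],\A)$ with the exactness of two-term sequences $A_1 \to A_0$... one must check this gives a $0$-Segal (equivalently, by Example~\ref{exa:lower_dim}(2) applied in the appropriate shifted sense, a constant-like) object. Here one should be slightly careful about whether the intended base case is $k=0$ or $k=1$ (the classical Waldhausen $\Se_{\bullet}$ being $2$-Segal is the archetype and can be taken as known via the path space criterion reducing to the nerve of a category being $1$-Segal).

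The hard part will be making the identification $\Plhd \Sk(\A) \simeq \Se^{\langle k-1 \rangle}_{\bullet}(\A)$ (and the $\Prhd$ analogue) precise at the level of diagram categories: one must match the indexing posets $\DDelta([k],[0]\ast[n])$ restricted appropriately against $\DDelta([k-1],[n])$, check that conditions \ref{sdot:1} and \ref{sdot:2} correspond under this matching, and — since we are working with coherent nerves of strict simplicial objects in $\Cat$ — ensure the comparison is an equivalence of $\infty$-categories compatible with the simplicial structure, so that the abstract path-space criterion applies. A cleaner alternative, avoiding the path spaces, would be a direct argument: for a triangulation of $C([n],2k)$ by the lower (resp.\ upper) boundary, show directly that an $\Snk(\A)$-diagram is the limit of its restrictions to the faces, by decomposing the exactness conditions \eqref{eq:ssequence} across the simplices of the triangulation and using that a $2k$-simplex's even/odd boundary faces in the cyclic-polytope sense realize exactly the relevant ``$(2k+1)$-term exact sequence split into two halves'' pattern; this is morally Gale's evenness criterion interacting with the homological grading. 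I expect the path-space induction to be the more economical route, so I would pursue that, treating the obstacle above as the one place where genuine work (rather than bookkeeping) is needed.
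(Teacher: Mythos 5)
There is a genuine gap, and it sits exactly where you flag ``the hard part'': the identification $\Plhd \Sk(\A) \simeq \Se^{\langle k-1\rangle}_{\bullet}(\A)$ is false, already for $k=1$. By definition $(\Plhd \Se^{\langle 1\rangle}(\A))_n = \Se^{\langle 1\rangle}_{n+1}(\A)$, and such a diagram is determined by the chain $A_{01}\hra A_{02} \hra \cdots \hra A_{0,n+1}$ of admissible monomorphisms; so $\Plhd \Se^{\langle 1\rangle}(\A)$ is the simplicial object of composable chains of monomorphisms in $\A$ (and $\Prhd$ gives chains of epimorphisms), whereas $\Se^{\langle 0\rangle}_{\bullet}(\A)$ is, by condition \ref{sdot:2}, the essentially constant object of chains of isomorphisms. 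The adjoined vertex does not ``absorb one dimension of the $\Se$-construction''; it rigidifies the diagram into a filtration. Your induction also misuses Proposition \ref{prop:pathspace}: after the first step you need to handle a \emph{lower} $(2k-1)$-Segal condition with $2k-1$ odd, but part (b) of that proposition only addresses the \emph{upper} condition in odd degree, so the second application of $\Plhd$ is not licensed as stated. If the naive double $\Plhd$-induction worked, the case $k>2$ would not be (as the paper notes) genuinely more involved.

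What the paper actually does is different in two respects. First, it uses \emph{mixed} iterated path spaces: for $k=2$ one computes $\Plhd\Prhd\Se^{\langle 2\rangle}(\A) \simeq \widetilde{\N}(\A)$, which is lower $1$-Segal, and the odd case of Proposition \ref{prop:pathspace} then yields that $\Se^{\langle 2\rangle}(\A)$ is \emph{upper $3$-Segal} --- a condition in dimension $3$, not $4$. Second, the passage from this single low-dimensional condition to the full $2k$-Segal statement is not done by verifying the lower and upper $2k$-Segal conditions separately; it is the propagation Theorem \ref{thm:fully} (lower \emph{or} upper $d$-Segal implies $k$-Segal for all $k>d$), whose proof rests on Rambau's results on stacking simplices in cyclic polytopes (Lemmas \ref{lem:rambau}, \ref{lem:excision}, \ref{lem:segal_excision}). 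Your proposal never invokes Theorem \ref{thm:fully}, and without it the path-space bookkeeping alone cannot reach the $2k$-Segal conclusion. For $k>2$ the paper defers to \cite{poguntke:higher}, where the same two ingredients (iterated mixed path spaces identified with explicit diagram categories, plus propagation) are carried out; your ``direct triangulation'' alternative is not developed enough to assess, but as sketched it would essentially have to reprove Theorem \ref{thm:fully} along the way.
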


\begin{itemize}
    \item For $k=1$, this statement was observed independently in \cite{DK12} and
        \cite{gkt}. From the perspective in \cite{DK12}, it (along with its
        many variants) served as the main motivation for introducing $2$-Segal
        spaces, namely as a means to investigate various Hall algebra
        constructions. In this case, the result is an immediate consequence of
        the path space criterion: First define the thickened nerve
        \[
            \widetilde{\N}(\A): \Delta^{\op} \lra \Cat, [n] \mapsto \Fun([n],\A).
        \]
        Then
        \begin{itemize}
            \item $\Plhd \Se^{\langle 1 \rangle}(\A)$ is equivalent to the simplicial object obtained from
                $\widetilde{\N}(\A)$ by passing, for each $n \ge 0$, to the full
                subcategory of $\Fun([n],\A)$ spanned by the composable chains of monomorphisms in $\A$. 
            \item $\Prhd \Se^{\langle 1 \rangle}(\A)$ is equivalent to the simplicial object obtained from
                $\widetilde{\N}(\A)$ by passing, for each $n \ge 0$, to the full
                subcategory of $\Fun([n],\A)$ spanned by the composable chains of epimorphisms in $\A$.
        \end{itemize}
        Both path spaces are lower $1$-Segal (i.e. classical Segal) objects so that $\Se^{\langle 1
        \rangle}(\A)$ is $2$-Segal by Proposition \ref{prop:pathspace}.

    \item For $k=2$, we observe that 
        \[
            \Plhd \Prhd \Se^{\langle 2 \rangle}(\A) = \Prhd \Plhd \Se^{\langle 2 \rangle}(\A) \simeq \widetilde{\N}(\A)
        \]
        which is lower $1$-Segal, so that $\Se^{\langle 2 \rangle}(\A)$ is
        upper $3$-Segal. By Theorem \ref{thm:fully}, it follows that
        $\Se^{\langle 2 \rangle}(\A)$ is $4$-Segal.

    \item For $k>2$, the proof still uses path space criteria, but is more involved \cite{poguntke:higher}.

\end{itemize}

\begin{rem}
    In \cite{BOORS:2segalsets, BOORS:2segalobjects}, the categorical input data
    for the $\Se_{\bullet}$-construction is generalized to a suitable class of
    ``stable'' double categories. In this context, it turns out that the
    $\Se_{\bullet}$-construction furnishes an equivalence between such double
    categories and $2$-Segal objects.  
\end{rem}

\section{Orientals}%
\label{sec:orientals}

As indicated in \S \ref{sec:cyclic_polytopes}, the analysis of the projection maps
\begin{equation}
    \label{eq:projection}
    \pi_d: C([n],d) \to C([n],d-1)
\end{equation}
offers a geometric perspective on the structure of triangulations of cyclic
polytopes. Here the term perspective can be taken quite literal -- a crucial point of 
our discussion was that two canonical triangulations of $C([n],d-1)$ become
``visible'' when looking at $C([n],d)$ from above and below with respect to the
last coordinate of $\RR^d$. Remarkably, as first pointed out in
\cite{kapranov-voevodsky}, it turns out that these two ways of looking at
simplicial complexes form above and below have categorical relevance: they
define the source and target, respectively, of a higher-dimensional composition
law. 

As a first hint towards this interpretation, note that the cyclic polytope
$C([n],n) \subset \RR^n$ can be regarded as the geometric realization of the
$n$-simplex $\Delta^n$ which we consider as an abstract simplicial complex on
the set $[n]$. As explained in Example \ref{exa:pachner}, the boundary $S :=
\partial \Delta^n$ decomposes into a lower hemisphere
\[
    S^- := (\partial \Delta^n)^- \subset \Delta^n
\]
and an upper hemisphere
\[
    S^+ := (\partial \Delta^n)^+ \subset \Delta^n.
\]
Passing to geometric realizations, by Proposition \ref{prop:features}, the
projection map $\pi$ identifies these simplicial hemispheres with
triangulations $\L([n],n-1)$ and $\U([n],n-1)$ of the cyclic
polytope $C([n],n-1)$. Under this identification, the equatorial $(n-2)$-sphere
$S^- \cap S^+$ maps to the boundary of $C([n],n-1)$.
We may now repeat this procedure in one dimension lower: The simplicial
boundary $T$ of $C([n],n-1)$ decomposes into a lower hemisphere $T^-$
and an upper hemisphere $T^+$. Interpreting all resulting abstract simplicial
complexes as subcomplexes of $\Delta^n$, we arrive at the incidence diagram
\[
\begin{tikzcd}
    & S^+ \ar{d} & \\
    T^- \ar[bend left]{ru} \ar[bend right]{rd} \ar{r} & \Delta^n & \ar{l} \ar[bend right]{lu}\ar[bend left]{ld} T^+ \\
                                               & S^- \ar{u} & 
\end{tikzcd}
\]
where
\begin{align*}
    S^- \cup S^+ & = \partial \Delta^n\\
    T^- \cup T^+ & = \partial S^- = \partial S^+ = S^- \cap S^+ 
\end{align*}
What we have thus unpacked here looks like a globular $2$-cell
\[
\begin{tikzcd}
    T^- \ar[bend left]{rr}{S^+} \ar[swap,bend right, ""{name=U}]{rr}{S^-}  & \ar[to=U, shorten <=20pt, shorten >=7pt, Rightarrow] &  T^+ 
\end{tikzcd}
\]
in a suitably defined combinatorial higher ``cobordism category'' of certain simplicial subcomplexes of
$\Delta^n$. As we will now explain, it turns out that this higher categorical
structure is a geometric incarnation of Street's oriental: the free
$\omega$-category on the $n$-simplex. 

To explain this, we first discuss some classical perspectives on strict
$(\infty,\omega)$-categories. Recall that an $\omega$-category is a set $A$
which comes equipped with a family $\{(s_n,t_n,*_n)\}_{n \in \NN}$ of maps
comprising, for every $n \in \NN$, 
\begin{itemize}
    \item an $n$-dimensional source and target map $s_n: A \to A$ and $t_n: A \to A$, respectively,
    \item a composition map
        \[
            *_n: A \times_A A \to A
        \]
        where the fiber product is defined as
        \[
            A \times_A A := \{(a,a') \in A | s_n(a) = t_n(a')\},
        \]
\end{itemize}
satisfying globularity, unitality, associativity, and commutativity constraints
(see \cite{street}). Given an $\omega$-category $A$, we may introduce, for $n \in \NN$, the set
\[
    A_{\le n} := \{a \in A | a = s_{n}(a) \} \subset A,
\]
yielding a globular set
\[
\begin{tikzcd}
    A_{\le 0} \ar[hookrightarrow]{r} & \ar[swap,bend right]{l}{s_0} \ar[bend left]{l}{t_0} A_{\le 1} \ar[hookrightarrow]{r} &\ar[swap,bend right]{l}{s_1} \ar[bend left]{l}{t_1} A_{\le 2} & \cdots
\end{tikzcd}
\]
which, when equipped with the restricted composition laws $*_n$, provides an
equivalent way to package the data of an $\omega$-category.

\begin{defi}
    \label{defi:d-admissible}
    Let $n \ge d \ge 0$. 

    \begin{enumerate}
        \item By a simplicial subcomplex of $C([n],d)$, we mean an
            abstract simplicial complex $K$ on $[n]$ consisting of subsets $\sigma \subset
            [n]$ of cardinality $\le d+1$ such that, for every $\sigma,\tau \in K$, we have 
            \[
                |\sigma| \cap |\tau| = |\sigma \cap \tau|
            \]
            where the geometric realization $|\sigma|$ denotes the convex hull of
            $\nu(\sigma) \subset \RR^d$. For a simplicial subcomplex
            $K$ of $C([n],d)$ we denote by  $|K|$ its geometric realization, defined as the
            union of the geometric realization of its simplices.

        \item For a simplicial subcomplex $K$ of $C([n],d)$ consider the
            projection map $\pi_d: \RR^d \to \RR^{d-1}$. Then $K$ is called
            $d$-admissible if, for every $x \in \pi_d(|K|)$, we have
            \[
                \{(x,t) | t \in \RR\} \cap |K| = \{x\} \times [l_x,u_x]
            \]
            for $l_x, u_x \in \RR$, $l_x \le u_x$. 
    \end{enumerate}
\end{defi}

Given a $d$-admissible subcomplex $K$ of $C([n],d)$ it is straightforward to
see that the set of points of the form $(x,l_x)$, $x \in \pi_d(|K|)$ is the
geometric realization of a subcomplex $K^- \subset K$ such that $K^-$ defines a
simplicial subcomplex of $C([n],d-1)$. Similarly, the points of the form
$(x,u_x)$ form the geometric realization of a subcomplex $K^+ \subset K$ such
that $K^+$ defines a simplicial subcomplex of $C([n],d-1)$. Generalizing the
situation of Example \ref{rem:geom_upper_lower} the simplicial complexes $K^+$
and $K^-$ can be ``seen'' when looking at $|K|$ from above and below,
respectively.\\

This allows us to give the following recursive definition:

\begin{defi}
    \label{defi:admissible}
    A simplicial subcomplex $K$ of $C([n],d)$ is called {\em admissible}, if 
    \begin{enumerate}[label = \arabic *.]
        \item $K$ is $d$-admissible and 
        \item $K^-$ and $K^+$ are admissible as subcomplexes of $C([n],d-1)$.
    \end{enumerate}
\end{defi}

\begin{exa}
    \label{rem:specialadm}
    Let $K$ be a simplical subcomplex of $C([n],d)$ whose geometric realization
    defines a triangulation of $C([n],d)$ (For example, the complexes
    $\L([n],d)$ and $\U([n],d)$ from Definition \ref{def:gale_eveness}). Then
    as explained in Remark \ref{rem:geom_upper_lower}, $K$ is $d$-admissible
    and we have $K^- = \L([n],d-1)$ and $K^+ = \U([n],d-1)$. In particular, by
    induction $K$ is admissible.
\end{exa}

\begin{defi}
    \label{defi:cyclic_oriental} Let $n \ge 0$. For $n \ge d \ge 0$, define $G_d$ to be
    the set of admissible subcomplexes of $C([n],d)$ and define $G_k := G_n$
    for $k > n$.
    \begin{enumerate}
        \item Then the diagram
            \begin{equation}\label{eq:globular2}
                    \begin{tikzcd}
                        G_0 \ar[hookrightarrow]{r} & \ar[swap,bend right]{l}{s_0} \ar[bend left]{l}{t_0} G_1 \ar[hookrightarrow]{r} & \ar[swap,bend right]{l}{s_1} \ar[bend left]{l}{t_1} G_2 & \cdots
                    \end{tikzcd}
            \end{equation}
            with
            \begin{itemize}
                \item $s_d(K) = K^-$,
                \item $t_d(K) = K^+$,
                \item $u(K) = K$,
            \end{itemize}
            for $n \ge d \ge 0$,  
            and $s_d = t_d = u = \id$, for $d > n$, defines a globular set.
        \item For $K,L \in G_{d+1}$ with $s_{d}(K) = t_{d}(L)$, define the composition law
            \begin{equation}\label{eq:comp}
                K \ast_d L = K \cup L.
            \end{equation}
            Then the globular set $G_\bullet$ equipped with the composition
            laws \eqref{eq:comp} defines an $\omega$-category.
    \end{enumerate}
\end{defi}

\begin{thm}
    \label{thm:cyclic_oriental}
    The $\omega$-category $(G_{\bullet}, *_{\bullet})$ defined in Definition
    \ref{defi:cyclic_oriental} is equivalent to Street's oriental $\O_n$, i.e.
    the free $\omega$-category on the $n$-simplex. 
\end{thm}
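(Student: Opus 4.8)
The plan is to compare $(G_\bullet, *_\bullet)$ with one of the standard combinatorial presentations of the oriental, for concreteness Street's original description \cite{street} of $\O_n$ (equivalently, its reformulation as the $\omega$-category associated by Steiner's functor $\nu$ to the augmented directed chain complex of $\Delta^n$, which is loop-free): recall that from such a presentation $\O_n$ is the free $\omega$-category on the $n$-simplex, hence is generated by one atom in each dimension $0 \le d \le n$, namely one $d$-atom for each face $I \subseteq [n]$ with $|I| = d+1$. Using this universal property I would define a morphism of $\omega$-categories $F \colon \O_n \to G_\bullet$ by sending the $d$-atom indexed by $I$ to the admissible subcomplex $\Delta^I = \langle\{I\}\rangle \in G_d$ (admissibility of $\Delta^I$ follows exactly as in Example \ref{rem:specialadm}, since $|\Delta^I| = C(I,d)$ is a $d$-simplex). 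For this assignment to extend to a functor one must check it respects the source and target of each generator, and this is precisely the point at which Gale's evenness criterion enters.

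The key compatibility is that $s_{d-1}(\Delta^I) = (\Delta^I)^- = \L(I,d-1)$ is generated exactly by the \emph{even} $(d-1)$-subfaces of $I$, while $t_{d-1}(\Delta^I) = (\Delta^I)^+ = \U(I,d-1)$ is generated by the \emph{odd} ones (Definition \ref{def:gale_eveness}); this matches Street's prescription for the $(d{-}1)$-source and target of the atom $I$ in $\O_n$, with the orientation bookkeeping fixed once and for all by the projection $\pi$ (Remark \ref{rem:projection}). I would verify inductively on $d$ that, under $F$ restricted to dimensions $< d$, the iterated composites of these subfaces match the corresponding composites in $G_\bullet$; since $*_d = \cup$ is strictly associative and a union of triangulations of adjacent chambers of $C([n],d-1)$ is again a triangulation, the composites on the $G$-side are well defined and computed simply by taking unions, so the inductive step reduces to Proposition \ref{prop:features} together with Gale evenness.

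It then remains to prove that $F$ is an equivalence, i.e.\ bijective on $d$-cells for each $d$. Surjectivity is the assertion that every admissible subcomplex $K$ of $C([n],d)$ is a $*_d$-composite of the simplices $\Delta^I$ it contains; here Rambau's Lemma \ref{lem:rambau} is the decisive tool, since the partial order $\preceq$ restricted to the $d$-simplices of $K$ admits a linear extension, and enumerating the $d$-simplices of $K$ along such an order exhibits $K = \Delta^{I_1} *_d \Delta^{I_2} *_d \cdots *_d \Delta^{I_r}$ as a legitimate iterated composite in $G_\bullet$, the compatibility $s_{d-1}(-) = t_{d-1}(-)$ at each stage being exactly $d$-admissibility. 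Injectivity is the normal-form statement that this composite determines and is determined by $K$ with no extra relations: a $d$-admissible $K$ is recovered from the parallel pair $(K^-, K^+)$ as the region of $\RR^d$ they bound, and its set of $d$-simplices is recovered from $|K|$ because the cyclic polytope is simplicial; transporting Street's uniqueness-of-normal-form theorem along $F$ and inducting on $d$ then gives injectivity. One should organize the whole induction by first treating the top cell ($n = d$) and propagating via functoriality along the face inclusions $\Delta^{[n]\setminus\{i\}} \hookrightarrow \Delta^n$ and the induced inclusions of cyclic polytopes, using Proposition \ref{prop:features}\ref{it:3}.

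The main obstacle is this surjectivity/normal-form step, i.e.\ showing that $G_\bullet$ realizes \emph{exactly} the universal property of the free $\omega$-category on $\Delta^n$ — neither a proper quotient nor an extension. Concretely one must establish (i) that every admissible subcomplex decomposes as a composite of atoms and (ii) that the composition law $\cup$ on $G_\bullet$ introduces no accidental identifications; this is a geometric incarnation of Street's ``excision of extremals'' (equivalently of loop-freeness in Steiner's sense), and Rambau's Lemma \ref{lem:rambau} is precisely what supplies the acyclicity of the pasting order that makes the argument run in all dimensions at once. The bookkeeping of source/target orientations across the recursion — reconciling ``even $=$ lower $=$ source'' with Street's conventions, uniformly in $d$ — is the other place where care is needed, but it is controlled entirely by Gale evenness and the geometry of the projection maps established in \S\ref{sec:cyclic_polytopes}.
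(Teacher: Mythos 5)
Your skeleton is close to the paper's: both arguments rest on Rambau's Lemma \ref{lem:rambau} (via the excision Lemma \ref{lem:excision}) to decompose an admissible subcomplex into a pasting of its top-dimensional simplices, and on Gale evenness to match sources and targets of the generators $\Delta^I$. The difference is one of direction: the paper verifies directly that $G_\bullet$ satisfies the universal property of the free $\omega$-category on these generators (uniqueness of extensions from the atomic decomposition, well-definedness of different bracketings from the interchange/Eckmann--Hilton law), and thereby never needs Street's explicit description of the cells of $\O_n$; you instead build $F\colon \O_n \to G_\bullet$ from freeness and try to prove it is a bijection on cells, which shifts the burden onto the combinatorics of $\O_n$ itself.

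Two steps of your argument do not hold as written. First, the decomposition $K = \Delta^{I_1} \ast \cdots \ast \Delta^{I_r}$ of a $d$-cell into \emph{bare} simplices (composed along $\ast_{d-1}$, not $\ast_d$) is not a well-formed composite: $s_{d-1}(\Delta^{I_j}) = (\Delta^{I_j})^-$ and $t_{d-1}(\Delta^{I_{j+1}}) = (\Delta^{I_{j+1}})^+$ are hemispheres of single simplices and coincide only if the two simplices project onto the same region of $\RR^{d-1}$, which essentially never happens. Each $\sigma_i$ must be whiskered by lower-dimensional admissible subcomplexes, $K_i = S_i \ast_{d-2} \sigma_i \ast_{d-2} S_i'$, as in \eqref{eq:atomic_dec}; ``composability at each stage is exactly $d$-admissibility'' does not supply these whiskers. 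Second, your injectivity argument relies on the claim that the set of $d$-simplices of $K$ is recovered from $|K|$ ``because the cyclic polytope is simplicial.'' This is false: only the \emph{boundary} of $C([n],d)$ is canonically simplicial, while the interior admits many triangulations. For example, $\L([n],d)$ and $\U([n],d)$, viewed as elements of $G_d$, have the same realization $C([n],d)$, the same source $\L([n],d-1)$, and the same target $\U([n],d-1)$, yet are distinct cells; so neither $(K^-,K^+)$ nor $|K|$ determines $K$. Injectivity (equivalently, that $\cup$ introduces no accidental identifications) has to be argued differently --- either by matching Street's well-formedness conditions against admissibility directly, or by the paper's route of checking the universal property, where the only remaining issue is that differently ordered atomic decompositions of the same $K$ yield the same value in any target $\omega$-category, which is exactly what the interchange law provides.
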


There are different ways to prove Theorem \ref{thm:cyclic_oriental}: one
possibility is to verify that the category $\O_n$ is freely generated by the
collection of simplices $\Delta^I$, $I \subset [n]$ (Each such a simplex gives
rise to a $(|I|-1)$-dimensional morphism from $(\Delta^I)_{-}$ to $(\Delta^I)_+$). 

The following lemma (cf. \cite[Proposition 5.11]{rambau}) is the key step in
this method of proof. It is of independent interest to us, as it will be used
in our investigation of higher Segal conditions and their relation to
correspondence categories. 

\begin{lem}
    \label{lem:excision}
    Let $n \ge d \ge 1$. Let $K$ be an admissible subcomplex of $C([n],d)$ and
    assume that $K^- \neq K^+$. Then there exists an admissible subcomplex $L$
    of $C([n],d)$ and a $d$-simplex $\Delta^I$, $I \subset [n]$, such that
    \begin{enumerate}[label = \arabic *.]
        \item $(\Delta^I)^- \subset L^+$,
        \item $L \cup \Delta^I = K$.
    \end{enumerate}
    In other words, $K$ is obtained from $L$ by stacking the simplex $\Delta^I$
    along its lower hemisphere on top of $L$. 
\end{lem}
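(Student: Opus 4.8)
The plan is to find the simplex $\Delta^I$ by looking at the \emph{highest} part of $|K|$ and peeling it off. Concretely, I would first dispose of the case $d = 1$ by hand: a $1$-admissible subcomplex of $C([n],1)$ is just a chain of consecutive edges $\{i_0,i_1\},\{i_1,i_2\},\dots,\{i_{m-1},i_m\}$, with $K^- = K^+$ precisely when this chain is empty (a point); if nonempty, take $I = \{i_{m-1},i_m\}$ to be the last edge, let $L$ be the chain with that edge removed, and check $(\Delta^I)^- = \{i_{m-1}\} \subset L^+ = \{i_{m-1}\}$ (or handle the degenerate subcase where $L$ is a single point symmetrically). For the inductive step $d \ge 2$, I would consider the difference region between the two hemispheres: since $K$ is $d$-admissible, over each $x \in \pi_d(|K|)$ the fiber is an interval $\{x\}\times[l_x,u_x]$, and $K^- \ne K^+$ means this interval is nondegenerate on a nonempty (hence full-dimensional, by the simplicial structure) part of $\pi_d(|K|)$. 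The key is to identify a single top-dimensional simplex $\Delta^I \in K$ that sits on the ``upper'' side there and whose removal still leaves an admissible complex.

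The cleanest way to produce $\Delta^I$ is to invoke Rambau's lemma (Lemma \ref{lem:rambau}): the $d$-simplices of $K$ that are \emph{not} contained in $K^-$ (equivalently, that touch the upper part $K^+ \setminus K^-$ of the interval structure) form a nonempty finite set, partially ordered by the transitive closure of $\prec$. Pick $\Delta^I$ to be a \emph{maximal} element of this set with respect to $\preceq$. Set $L := \langle K \setminus \{I\}\rangle$, the subcomplex obtained by discarding the single top face $I$ (and any lower faces of $I$ not shared with another top face of $K$). I would then verify the two required properties. For property~2, $L \cup \Delta^I = K$ is immediate from the construction. For property~1, $(\Delta^I)^- \subset L^+$: the maximality of $I$ with respect to $\prec$ means no $d$-simplex of $K$ lies ``above'' $I$ across any facet in $(\Delta^I)^+$; combined with $d$-admissibility this forces the odd facets of $\Delta^I$ to be exposed on top of $|K|$, so that after removing $\Delta^I$ the even (lower) facets of $\Delta^I$ become the new upper boundary there, i.e. $(\Delta^I)^- \subset L^+$.

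The remaining obligation — and this is the main obstacle — is to check that $L$ is again admissible, so that the induction hypothesis applies. This requires two things: that $L$ is still $d$-admissible (the fibers of $\pi_d$ over $|L|$ are still intervals), and that $L^\pm$ are admissible subcomplexes of $C([n],d-1)$. The first is geometrically plausible — removing a simplex that sits on the very top of the interval structure cannot create a gap in a fiber — but needs the maximality of $I$ to rule out the pathological case where $\Delta^I$ is ``sandwiched'' and its removal disconnects a fiber; here one argues that such sandwiching would exhibit a simplex of $K$ above $\Delta^I$, contradicting maximality, or would violate $d$-admissibility of $K$ itself. The second, admissibility of $L^\pm$, should follow because $L^- = K^-$ away from $\pi_d(|\Delta^I|)$ and $L^+$ differs from $K^+$ only by replacing $(\Delta^I)^+$ with $(\Delta^I)^-$ over $\pi_d(|\Delta^I|)$ — a local modification that corresponds exactly to a Pachner-type move in dimension $d-1$, and one checks directly (again using Example \ref{rem:specialadm} and the interval description of Remark \ref{rem:geom_upper_lower}) that the result is still admissible. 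I would isolate this verification as the technical heart of the argument and expect it to mirror closely the bookkeeping in \cite[Proposition 5.11]{rambau}.
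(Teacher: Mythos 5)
Your first step coincides with the paper's: use Rambau's lemma (Lemma \ref{lem:rambau}) to locate a $d$-simplex $\Delta^I$ of $K$ that is maximal for $\preceq$ (the paper phrases this as following an ascending chain $J \prec J' \prec \cdots$ until it terminates), and then argue via $d$-admissibility that maximality forces $(\Delta^I)^+ \subset K^+$. That part is fine. (A small slip: the ``$d$-simplices of $K$ not contained in $K^-$'' are simply all $d$-simplices of $K$, since $K^-$ has dimension $d-1$.)

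The genuine gap is in the construction of $L$. You set $L := \langle K \setminus \{I\}\rangle$; since $I$ is a maximal face of $K$, this equals $K\setminus\{I\}$ and retains \emph{every} proper face of $I$, in particular the facets of the upper hemisphere $(\Delta^I)^+$. Hence $|L| \supset \partial|\Delta^I|$, and for $x$ in the interior of $\pi_d(|\Delta^I|)$ the fiber $(\{x\}\times\RR)\cap|L|$ is the old interval with the open segment through $\mathrm{int}\,|\Delta^I|$ deleted but its top endpoint $(x,u_x)\in|(\Delta^I)^+|$ retained: a disconnected set. So $L$ is not $d$-admissible --- already for $K=\Delta^I$ your $L$ is the hollow boundary $\partial\Delta^I$ --- and, even setting that aside, the top of the fiber is unchanged, so $L^+$ would still contain $(\Delta^I)^+$ rather than $(\Delta^I)^-$ and property 1 fails. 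Your heuristic that removing a top simplex ``cannot create a gap in a fiber'' is exactly what breaks. The paper's prescription is to remove $I$ \emph{together with the interior of its upper hemisphere}, i.e.\ the faces of $(\Delta^I)^+$ not lying on the equator $(\Delta^I)^+\cap(\Delta^I)^-$; these belong to no other maximal simplex of $K$ (two $d$-simplices of a simplicial subcomplex sharing a facet must lie on opposite sides of it, by the intersection condition in Definition \ref{defi:d-admissible}), so the deletion is legitimate and yields an admissible $L$ with $L^-=K^-$ and $(\Delta^I)^-\subset L^+$. Finally, your parenthetical about also discarding ``lower faces of $I$ not shared with another top face'' is inconsistent with the formula $\langle K\setminus\{I\}\rangle$ and, read literally, would wrongly delete the lower hemisphere as well (in the case $K=\Delta^I$ the correct $L$ is $(\Delta^I)^-$, not the empty complex).
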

\begin{proof}
    Since $K^- \neq K^+$, there exists at least one $d$-simplex $\Delta^J$ in
    $K$. Either $(\Delta^J)^+ \subset K^+$, or, we find another $d$-simplex $\Delta^{J'}$
    in $K$ such that
    \[
        J \prec J'.
    \]
    Since the transitive closure of $\preceq$ is a partial order by Lemma
    \ref{lem:rambau}, this chain of ``stacked'' simplices must terminate at a
    simplex $I$ in $K$ such that $(\Delta^I)^+ \subset K^+$. Removing $I$ along
    with the interior of its upper hemisphere from $K$ yields the desired
    complex $L$. It is clear that $L$ is admissible with $L^- = K^-$. 
\end{proof}

We provide a sketch of proof of Theorem \ref{thm:cyclic_oriental}: Iterated
applications of Lemma \ref{lem:excision} imply that an admissible subcomplex
$K$ of $C([n],d)$ can be written as a composite
\[
        K_1 \ast_{d-1} K_2 \ast_{d-1}\cdots \ast_{d-1} K_r  
\]
where each $K_i$ is an admissible subcomples of $C([n],d)$ containing exactly
one $d$-simplex $\sigma_i$. Each $K_i$ can in turn be written as
\[
    K_i = S_i \ast_{d-2} \sigma_i \ast_{d-2} S_i'
\]
with $S_i$ and $S'_i$ admissible subcomplexes of $C([n],d-1)$ where $S_i^+$ is
the lower equatorial hemisphere of $\sigma_i$ and $(S'_i)^-$ is the upper
equatorial hemisphere of $\sigma_i$. This implies that, given an
$\omega$-category $A$, any extension of an $\omega$-functor $G_{\le d-1} \to A$
to $G_d$ is uniquely determined by its values on $d$-simplices. Vice versa, it
is straightforward to see that an assignment on the collection of $d$-simplices,
compatible with their sources and targets, defines an extension along $G_{\le
d-1} \subset G_{\le d}$: Different choices of the factorization 
\begin{equation}\label{eq:atomic_dec}
    K_1 \ast_{d-1} K_2 \ast_{d-1}\cdots \ast_{d-1} K_r  
\end{equation}
get assigned the same values, as can be directly seen from the $2$-categorical
interchange law (Eckmann-Hilton argument) in (the $2$-category of
$(d-2)$-morphisms in) $A$. Therefore $G$ satisfies the universal property
characterizing the oriental.

\begin{rem}
    \label{rem:kapranov}
    The higher categorical relevance of cyclic polytopes was first pointed out
    in \cite{KV91} where triangulations of cyclic polytopes are identified with
    pasting schemes in the oriental. An extension to the geometric description
    of {\em all} morphisms in the oriental given by Theorem
    \ref{thm:cyclic_oriental} was established in \cite{govzman}.
\end{rem}

\section{The interplay of higher Segal conditions}%
\label{sec:the_interplay_of_higher_segal_conditions}

In this section, we outline a proof of the following result from
\cite{poguntke:higher}: 

\begin{thm}
    \label{thm:fully}
    Let $\C$ be an $\infty$-category, $X: \Delta^{\op} \to \C$ a simplicial
    object, and let $d \ge 0$. Assume that $X$ is lower $d$-Segal {\em or} upper
    $d$-Segal. Then, for every $k>d$, $X$ is $k$-Segal.
\end{thm}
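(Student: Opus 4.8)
The plan is to reduce the statement to a purely combinatorial fact about triangulations of cyclic polytopes, namely Rambau's theorem on the higher Stasheff--Tamari orders, and to use it to connect limits over arbitrary triangulations via chains of elementary ``Pachner'' modifications, each of which is an equivalence when $X$ satisfies a single higher Segal condition. First I would observe that it suffices to treat the case $k = d+1$, since the general case $k > d$ then follows by an easy induction on $k$ once one knows that lower or upper $d$-Segal implies $d$-Segal (so in particular $(d+1)$-Segal after the base case), together with the fact — which I would establish as a byproduct — that $d$-Segal implies that $X_n \to X_T$ is an equivalence for \emph{every} triangulation $T$ of $C([n],d)$, $n>d$ (this is the content flagged in Remark \ref{rem:fully} and Theorem \ref{thm:lowerupperfull}).

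The core of the argument is the following. Fix $n > d$ and consider the set of triangulations of the cyclic polytope $C([n],d)$. Rambau's work (the source of Lemma \ref{lem:rambau}) shows that any two triangulations of $C([n],d)$ are connected by a sequence of bistellar flips, each of which takes place inside a sub-polytope combinatorially isomorphic to $C([d+2], d)$, i.e. is a $(d+1,d)$-Pachner move replacing $\L([d+2],d)$ by $\U([d+2],d)$ (or vice versa) on $d{+}2$ of the vertices. Concretely: given a triangulation $T$ containing the lower boundary of such a sub-configuration, the flipped triangulation $T'$ contains the upper boundary, and the two are related by removing/adding the interiors according to the incidence picture. Now, for a simplicial object $X$, I claim that if $X$ is lower $d$-Segal \emph{or} upper $d$-Segal, then for any such elementary flip the induced comparison map $X_T \xrightarrow{\sim} X_{T'}$ is an equivalence: indeed, both $X_T$ and $X_{T'}$ receive canonical maps from $X_{\text{(configuration restricted)}}$ built by gluing the ``unchanged part'' of the triangulation to the local $\L$ (resp. $\U$) complex, and the single $d$-Segal condition applied on the $d{+}2$ relevant vertices (together with a cofinality/pasting argument for limits over the glued diagram) identifies the local limit over $\L([d+2],d)$ with $X_{d+2}$ restricted appropriately, hence with the local limit over $\U([d+2],d)$. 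The key point is that lower \emph{and} upper $d$-Segal are \emph{not} both needed for this local comparison: lower $d$-Segal alone gives $X_{\text{(glued, lower)}} \simeq X_{\text{(glued, }\Delta^{d+2}\text{)}}$, and one must check that the corresponding statement for the upper complex also follows — this is where Gale's evenness criterion and the symmetry of the cyclic polytope under the order-reversing automorphism of $[d+2]$ enter, swapping lower and upper boundaries. Chaining these elementary equivalences along a Rambau path connecting the ``standard'' triangulation $\L([n],d)$ to $\U([n],d)$ yields $X_{\L([n],d)} \simeq X_{\U([n],d)}$, and combined with either hypothesis gives that both higher Segal maps out of $X_n$ are equivalences, i.e. $X$ is $d$-Segal.

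I expect the main obstacle to be the bookkeeping for the limit-gluing step: one must verify that a limit over a triangulation $T$, presented as a union of two subcomplexes overlapping in a common sub-triangulation, decomposes as the corresponding fiber product of the limits over the pieces, and that this decomposition is compatible with the canonical cone maps from $X_n$ in a way that lets one localize the comparison to a single sub-configuration on $d{+}2$ vertices. This is a cofinality argument in the spirit of the remarks following Definition \ref{defi:higher_segal} and the cofinality observations in Example \ref{exa:lower_dim}, but making it uniform across an arbitrary Rambau flip sequence — rather than just for the special complexes $\L,\U$ — requires knowing that every intermediate triangulation is admissible and that the flip locus is a full sub-polytope. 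The second, more subtle, obstacle is the symmetry argument showing that the \emph{lower} $d$-Segal hypothesis alone suffices to identify the limit over an \emph{upper} local complex with $X_{d+2}$: the clean way to see this is to apply the order-reversing isomorphism $[d+2]\to[d+2]$, under which, by Gale's evenness, even subsets go to even or odd subsets depending on the parity of $d$ (this parity dependence is exactly the phenomenon recorded in Proposition \ref{prop:pathspace}), so some care with the parity of $d$ is needed — but in all cases one hypothesis propagates to the other locally, which is all that is required to run the chain.
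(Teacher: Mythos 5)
There is a genuine gap at the heart of your argument, namely the claim that a single (lower \emph{or} upper) $d$-Segal hypothesis makes the comparison $X_T \simeq X_{T'}$ across a bistellar flip $T \vartriangleleft T'$ of triangulations of $C([n],d)$ an equivalence. Writing $R = T \cup \Delta^I$ for the flip supported on $I \subset [n]$ with $|I| = d+2$, the lower $d$-Segal condition gives $X_R \simeq X_T$ (because $X_I \simeq \lim X|\L(I,d)^{\op} = \lim X|((\Delta^I)^-)^{\op}$), while the upper condition gives $X_R \simeq X_{T'}$; to cross the flip you need \emph{both}. This is precisely Lemma \ref{lem:fully}, and it is the reason Theorem \ref{thm:lowerupperfull} assumes $X$ is $d$-Segal on both sides. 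If your local claim were correct, chaining flips from $\L([n],d)$ to $\U([n],d)$ would show that every lower $d$-Segal object is upper $d$-Segal --- and indeed your write-up concludes that ``$X$ is $d$-Segal'', which is not the statement being proved (the theorem asserts $k$-Segal only for $k>d$) and is false: for $d=1$, lower $1$-Segal objects are ordinary Segal objects, whereas upper $1$-Segal objects are essentially constant (Example \ref{exa:lower_dim}). The proposed repair via the order-reversing automorphism of the vertex set cannot close this gap, because that automorphism does not act on the given simplicial object $X$; it only relates the Segal conditions of $X$ to those of the order-reversed simplicial object, so Gale's evenness cannot convert a lower condition on $X$ into an upper condition on the \emph{same} $X$.

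The essential missing idea is a dimension shift: the excision has to be run one dimension higher rather than by flipping within $C([n],d)$. The lower $d$-Segal condition controls the removal of a top-dimensional simplex $\Delta^I$, $|I| = d+2$, from an admissible subcomplex of $C([n],d+1)$ along its lower hemisphere: with $K = L \cup \Delta^I$ as in Lemma \ref{lem:excision}, the inclusion $L \subset K$ induces $X_K \simeq X_L$, since $X|(L \cup \{I\})^{\op}$ is a right Kan extension of $X|L^{\op}$ (the relevant slice poset is $(\Delta^I)^-$, and the lower $d$-Segal condition says exactly $X_I \simeq \lim X|\L(I,d)^{\op}$). Rambau's Lemma \ref{lem:rambau} guarantees that the stripping sequence terminates, reducing any triangulation $T$ of $C([n],d+1)$ to $T^- = \L([n],d)$, whence $X_n \to X_T$ is an equivalence for every such $T$; taking $T = \L([n],d+1)$ and $T = \U([n],d+1)$ shows $X$ is $(d+1)$-Segal, and your induction on $k$ then goes through (with the mirror-image argument, stripping toward $T^+$, for the upper hypothesis). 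Your concerns about gluing limits over unions of subcomplexes are legitimate but secondary; without the dimension shift the argument cannot be repaired.
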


\begin{lem}
    \label{lem:segal_excision}
    Let $\C$ be an $\infty$-category with finite limits and let $X:
    \Delta^{\op} \to \C$ be a lower $(d-1)$-Segal object in $\C$. Let $K,L$ as
    in Lemma \ref{lem:excision}. Then the inclusion $L \subset K$ induces an 
    equivalence
    \[
        X_K \overset{\simeq}{\lra}  X_L.
    \]
\end{lem}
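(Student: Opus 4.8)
The plan is to realise $X_K$ as a fibre product one of whose two legs is a lower $(d-1)$-Segal equivalence, and then to use that equivalences are stable under base change. To set up the square, write $M := L\cap\Delta^I$. For the subcomplex $L$ produced by Lemma~\ref{lem:excision} --- obtained from $K$ by removing the simplex $I$ together with the interior of its upper hemisphere (cf.\ the proof of Lemma~\ref{lem:excision}) --- one has $M=(\Delta^I)^-$, while $L\cup\Delta^I=K$ by construction. Both $L$ and $\Delta^I$ are closed under passing to nonempty subfaces, hence are sieves in the poset $K$; dually, $L^{\op}$ and $(\Delta^I)^{\op}$ are cosieves in $K^{\op}$, they cover $K^{\op}$, and their intersection is $M^{\op}$.

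Next I would observe that, because $L^{\op}$ and $(\Delta^I)^{\op}$ are cosieves, there is no morphism of $K^{\op}$ between an object of $L^{\op}\setminus M^{\op}$ and an object of $(\Delta^I)^{\op}\setminus M^{\op}$ (such a morphism would, by the cosieve property, drag both objects into $M^{\op}$), so every morphism of $K^{\op}$ lies entirely in $L^{\op}$ or entirely in $(\Delta^I)^{\op}$. Consequently a cone on $X|K^{\op}$ is precisely the datum of a cone on $X|L^{\op}$ together with a cone on $X|(\Delta^I)^{\op}$ restricting to a common cone on $X|M^{\op}$; equivalently, $K^{\op}\simeq L^{\op}\cup_{M^{\op}}(\Delta^I)^{\op}$ as $\infty$-categories. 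Hence the commuting square of restriction maps
\[
\begin{tikzcd}
    X_K \ar{r}\ar{d} & X_{\Delta^I} \ar{d}\\
    X_L \ar{r} & X_M
\end{tikzcd}
\]
is cartesian in $\C$, that is, $X_K\simeq X_L\times_{X_M}X_{\Delta^I}$.

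It remains to identify the right-hand vertical map. The poset $\Delta^I$ has the maximal element $I$, so $(\Delta^I)^{\op}$ has an initial object and $X_{\Delta^I}\simeq X_I$. Unwinding Construction~\ref{con:descent} one checks that, under this identification, $X_{\Delta^I}\to X_M$ is exactly the canonical map \eqref{eq:limitmap} associated to the complex $M=(\Delta^I)^-$ on the ordered set $I$; choosing an order isomorphism $I\cong[d]$ this becomes the lower $(d-1)$-Segal map $X_d\to X_{\L([d],d-1)}$ of $X$ at level $d$ (legitimate since $d>d-1$). As $X$ is lower $(d-1)$-Segal this map is an equivalence, so its base change $X_K\to X_L$ in the cartesian square is an equivalence as well; this base change is the map induced by $L\subseteq K$, which proves the lemma.

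The only step that is more than bookkeeping is the cartesianness of the square --- the Mayer--Vietoris/codescent property of limits along a cover by two cosieves --- but this is standard and reduces, via the pushout description of $K^{\op}$, to the analogous statement for mapping spaces. The remaining ingredients are the identification $M=(\Delta^I)^-$, which is exactly where condition~1 of Lemma~\ref{lem:excision} and the explicit choice of $L$ are used, and the computation $X_{\Delta^I}=X_I$ via the terminal vertex of $\Delta^I$; for $d=1$ the phrase ``lower $0$-Segal'' is to be read through the evident extension of Definition~\ref{defi:higher_segal} to $d=0$.
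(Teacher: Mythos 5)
Your proof is correct, but it runs along a different formal track than the paper's. The paper first observes that the inclusion $L \cup \{I\} \subset K$ (adjoining only the single top simplex $I$, not all of $\Delta^I$) is cofinal, and then shows that $X|(L\cup\{I\})^{\op}$ is a right Kan extension of its restriction to $L^{\op}$: the pointwise formula at $I$ produces exactly the slice poset $(\Delta^I)^-$, so the lower $(d-1)$-Segal condition $X_I \simeq \lim X|\L(I,d-1)^{\op}$ is what makes the Kan-extension property hold, and limits are unchanged under restriction from a right Kan extension. You instead decompose $K^{\op}$ as the pushout $L^{\op}\cup_{M^{\op}}(\Delta^I)^{\op}$ along the two cosieves, obtain the Cartesian square $X_K\simeq X_L\times_{X_M}X_{\Delta^I}$, identify the right-hand leg with the Segal map $X_d\to X_{\L([d],d-1)}$, and conclude by stability of equivalences under base change. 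The essential input --- the first lower $(d-1)$-Segal condition applied to $I$ and its lower hemisphere --- is identical in both arguments; what differs is the packaging. Your Mayer--Vietoris route makes the Segal map visibly appear as one leg of a pullback square and is arguably more transparent, at the cost of having to justify the descent property of limits along a cover by two sieves (your verification that no morphism of $K^{\op}$ crosses between the two pieces outside $M^{\op}$ is the right point, though note the cosieve property only forces the \emph{target} of such a morphism into $M^{\op}$, which already suffices). The paper's cofinality-plus-Kan-extension phrasing is more economical and is the form that gets iterated directly in Corollary \ref{cor:d-d+1}. Your identification $M=L\cap\Delta^I=(\Delta^I)^-$ relies on the explicit construction of $L$ in the proof of Lemma \ref{lem:excision} (removing $I$ and the interior of its upper hemisphere), which is the same level of detail the paper itself assumes, and your remark about reading ``lower $0$-Segal'' through the evident extension of Definition \ref{defi:higher_segal} for $d=1$ is an appropriate flag of an edge case the paper leaves implicit.
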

\begin{proof}
    We first note that the inclusion
    \[
        i: L \cup \{I\} \subset K
    \]
    is cofinal: for every simplex $J \in K \setminus L \cup \{I\}$, the slice
    category $J/i$ consists of the single object $J \subset I$. 
    Second, we observe that the assumption that $X$ be lower $(d-1)$-Segal implies that 
    \[
        X|(L \cup \{I\})^{\op}
    \]
    is a right Kan extension of its restriction to $L^{\op}$. Indeed, the
    relevant slice category for the pointwise formula of the value of the Kan
    extension at $I$ identifies precisely with the poset $(\Delta^I)^-$ so that
    the lower $(d-1)$-Segal condition
    \[
        X_I \simeq \lim X | \L(I,d-1)^{\op}
    \]
    implies the claim.
\end{proof}

\begin{cor}
    \label{cor:d-d+1}
    Let $\C$ be an $\infty$-category with finite limits and let $X$ be a lower
    $(d-1)$-Segal object in $\C$. Then $X$ is $d$-Segal.
\end{cor}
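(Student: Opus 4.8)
The plan is to show that a lower $(d-1)$-Segal object $X$ is both lower and upper $d$-Segal, which is exactly the assertion that $X$ is $d$-Segal. So fix $n > d$ and consider the two canonical triangulations $\L([n],d)$ and $\U([n],d)$ of the cyclic polytope $C([n],d)$, which are admissible subcomplexes by Example \ref{rem:specialadm}. The key reduction, which I would carry out first, is that the full simplex $\Delta^n$, viewed as the admissible subcomplex of $C([n],n)$ consisting of all subsets of $[n]$, can be built up from $\L([n],d)$ (resp.\ $\U([n],d)$) by iterated stacking of $d$-simplices along lower hemispheres in the sense of Lemma \ref{lem:excision}. More precisely, starting from $K = \Delta^n$ — or rather from the $d$-skeleton picture in $C([n],d)$ — and applying Lemma \ref{lem:excision} repeatedly, one obtains a finite chain of admissible subcomplexes of $C([n],d)$
\[
    \L([n],d) = L_0 \subset L_1 \subset \cdots \subset L_m = \Delta^n|_{\le d+1}
\]
(and similarly descending to $\U([n],d)$ on the other side), where at each step $L_{i+1}$ is obtained from $L_i$ by stacking one $d$-simplex along its lower hemisphere. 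The existence and termination of such a chain is guaranteed by Lemma \ref{lem:excision} together with Rambau's Lemma \ref{lem:rambau}, since the $\prec$-relation has no cycles.

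Next I would feed this chain into Lemma \ref{lem:segal_excision}. Since $X$ is lower $(d-1)$-Segal, each single stacking step $L_i \subset L_{i+1}$ induces an equivalence $X_{L_{i+1}} \overset{\simeq}{\to} X_{L_i}$. Composing these equivalences along the chain yields an equivalence
\[
    X_{\Delta^n|_{\le d+1}} \overset{\simeq}{\lra} X_{\L([n],d)},
\]
and similarly an equivalence $X_{\Delta^n|_{\le d+1}} \overset{\simeq}{\to} X_{\U([n],d)}$. Here I am using that the $d$-admissible subcomplex containing all $d$-simplices of $\Delta^n$ has the same limit as any of these triangulations once we know the relevant Kan-extension / cofinality statements; in fact the cleanest route is to observe that the limit $X_K$ is insensitive to passing between any two admissible subcomplexes of $C([n],d)$ with the same $K^-$, which is precisely what Lemma \ref{lem:segal_excision} packages. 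Finally, the canonical map $X_n \to X_{\L([n],d)}$ factors as $X_n \to X_{\Delta^n|_{\le d+1}} \to X_{\L([n],d)}$, where the second map is the equivalence just constructed; a short cofinality argument identifies the composite $X_n \to X_{\Delta^n|_{\le d+1}}$ with an equivalence as well (the inclusion of the $(\le d+1)$-skeleton of $\Delta^n$ into $\P^*([n])$ together with its cone point is cofinal, essentially because every subset of $[n]$ of cardinality $> d+1$ receives a map from one of cardinality $d+1$). Therefore $X_n \to X_{\L([n],d)}$ is an equivalence, i.e.\ $X$ is lower $d$-Segal, and symmetrically upper $d$-Segal.

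The main obstacle is the combinatorial bookkeeping in the first step: making precise that iterated application of Lemma \ref{lem:excision} actually connects $\L([n],d)$ to (the $d$-truncation of) $\Delta^n$ and, dually, connects it to $\U([n],d)$, so that both higher Segal maps are witnessed by the same object $X_{\Delta^n|_{\le d+1}}$. One has to check that the stacking procedure of Lemma \ref{lem:excision}, which removes a top $d$-simplex with $(\Delta^I)^+ \subset K^+$, can be run in reverse starting from $\L([n],d)$ and terminates exactly at the full $d$-simplicial complex on $[n]$ — equivalently, that every $d$-simplex $\Delta^I \subset C([n],d)$ eventually gets added. This is where Rambau's Lemma is doing the real work: it guarantees the process is well-founded and that the order of stacking does not matter up to the equivalences produced by Lemma \ref{lem:segal_excision}. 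Once this is in place, everything else is a formal composition of equivalences plus two routine cofinality observations.
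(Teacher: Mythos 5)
Your proposal assembles the right ingredients (Lemmas \ref{lem:rambau}, \ref{lem:excision}, \ref{lem:segal_excision}) but connects them through the wrong object, and the resulting argument does not close. You try to bridge $\L([n],d)$ and $\U([n],d)$ via a common \emph{super}-complex $\Delta^n|_{\le d+1}$ consisting of all $d$-simplices on $[n]$. This object is not a simplicial subcomplex of $C([n],d)$ in the sense of Definition \ref{defi:d-admissible}: distinct $d$-simplices of $C([n],d)$ generally overlap in their interiors (already for $n=3$, $d=2$ the two triangulations of the square share no common refinement by these simplices), so $\Delta^n|_{\le d+1}$ is not admissible and Lemmas \ref{lem:excision} and \ref{lem:segal_excision} simply do not apply to it. Moreover, the excision lemma runs in the opposite direction from what you need: starting from an admissible $K$ it \emph{removes} top $d$-simplices, terminating when $K^-=K^+$, i.e.\ at the $(d-1)$-dimensional complex $K^-$; it never "stacks up" to a union of all $d$-simplices. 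Finally, your auxiliary claim that $X_n \to X_{\Delta^n|_{\le d+1}}$ is an equivalence by cofinality is false for a general simplicial object (e.g.\ for the nerve of a category with $d=1$, $n=3$, the target records six pairwise morphisms with matching endpoints and no composition constraint), and the criterion you cite ("every large subset receives a map from one of cardinality $d+1$") is not a cofinality criterion.

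The paper's proof goes \emph{down} in dimension rather than up: for any triangulation $T$ of $C([n],d)$ (in particular $T=\L([n],d)$ or $T=\U([n],d)$), iterated application of Lemma \ref{lem:excision} produces a chain $\L([n],d-1)=T^-=K_1\subset\cdots\subset K_m=T$ of admissible subcomplexes, each step being a single stacking; Lemma \ref{lem:segal_excision} (using the lower $(d-1)$-Segal hypothesis) makes each $X_{K_{i+1}}\to X_{K_i}$ an equivalence, and the hypothesis is used a second time, crucially, to identify $X_n\overset{\simeq}{\to}X_{K_1}=X_{\L([n],d-1)}$ via the lower $(d-1)$-Segal map. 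Two-out-of-three then gives $X_n\simeq X_T$. Your write-up never invokes this last identification, which is the step that actually ties $X_n$ to the diagram; without it, and with the inadmissible bridging object removed, there is no path from $X_n$ to $X_{\L([n],d)}$ or $X_{\U([n],d)}$. I would rework the argument so that the two triangulations are compared through their common lower boundary $\L([n],d-1)$ below them, not through a union above them.
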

\begin{proof}
    Let $n > d$ and $T \subset \P^*([n])$ be an abstract simplicial complex
    corresponding to a triangulation of $C([n],d)$. Then, using the notation
    from Lemma \ref{lem:excision}, we have $T$ admissible with $T^+ \neq T^-$.
    Applying Lemma \ref{lem:excision} successively, we obtain a finite sequence
    $K_1, ..., K_m$ of admissible subcomplexes such that
    \[
        K_1 \subset K_2 \subset ... \subset K_m = T
    \]
    such that 
    \begin{itemize}
        \item $K_{i+1}$ is obtained from $K_i$ by stacking a $d$-simplex on top
            of $K_i$ (more precisely $K_{i+1} = K$ and $K_{i} = L$ in Lemma
            \ref{lem:excision}),
        \item $K_1 = K_1^+ = K_1^- = T^- = \L([n],d-1)$.
    \end{itemize} 
    From Lemma \ref{lem:segal_excision} we then obtain a commutative diagram
    \[
    \begin{tikzcd}
        X_n  \ar{d}\ar[bend left=5]{drrrr}{\simeq} &&&& \\
        X_T \ar{r}{\simeq} & X_{K_{m-1}} \ar{r}{\simeq} &  X_{K_{m-2}} \ar{r}{\simeq} & ... \ar{r}{\simeq} &  X_{K_1}
    \end{tikzcd}
    \]
    in $\C$.
    Therefore, the map $X_n \to X_T$ is an equivalence so that,
    setting $T=\L([n],d)$ and $T=\U([n],d)$, we obtain, in particular, that $X$
    is lower and upper $d$-Segal.
\end{proof}

\begin{proof}[Proof of Theorem \ref{thm:fully}]
    The Theorem with hypothesis lower $d$-Segal follows by induction from
    Corollary \ref{cor:d-d+1}. For the upper $d$-Segal hypothesis, the proof
    follows precisely the same strategy, but instead interpolating between the
    triangulation $T$ and its lower boundary $T^-$ by removing simplices from
    top to bottom, we remove simplices from bottom to top to connect to $T^+$.
    The arguments adapt to this procedure mutatis mutandis. 
\end{proof}

\begin{thm}
    \label{thm:lowerupperfull}
    Let $\C$ be an $\infty$-category with finite limits, and let $X$ be a
    $d$-Segal object in $\C$. Then, for every $n > d$, and every triangulation
    $K$ of the cyclic polytope $C([n],d)$, the corresponding map
    \[
        X_n \to X_K
    \]
    from \eqref{eq:limitmap} is an equivalence in $\C$.
\end{thm}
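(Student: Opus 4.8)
The plan is to reduce the case of an arbitrary triangulation $K$ of $C([n],d)$ to the two distinguished triangulations $\L([n],d)$ and $\U([n],d)$, for which the equivalence $X_n \to X_K$ holds by hypothesis, by interpolating through a chain of triangulations connected by single Pachner-type moves and showing that each move induces an equivalence on the corresponding limits. The combinatorial backbone for this is Rambau's work: any two triangulations of a cyclic polytope are connected by a sequence of bistellar flips, and moreover Lemma \ref{lem:excision} gives us the finer statement that any admissible subcomplex is built up from $\L([n],d-1)$ by stacking $d$-simplices one at a time. The key geometric input I would extract is that a single flip replacing a triangulation $T$ by a triangulation $T'$ is governed by a sub-$(d+1)$-polytope on $d+2$ of the vertices, i.e., locally looks like the $\U([d+1],d)$-versus-$\L([d+1],d)$ picture of Example \ref{exa:pachner}, with the rest of the triangulation held fixed.

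First I would set up the local model: fix a subset $M \subset [n]$ with $|M| = d+2$ such that the flip is supported on the subpolytope $C(M,d) \subseteq C([n],d)$, meaning $T$ and $T'$ agree outside $C(M,d)$ and restrict on $C(M,d)$ to $\L(M,d)$ and $\U(M,d)$ respectively (up to swapping). Using the notation of Construction \ref{con:descent}, I would express $X_T$ and $X_{T'}$ as limits over the posets $T^{\op}$ and $(T')^{\op}$, and observe that both posets are obtained from the common part $(T \cap T')$ by gluing in, respectively, the poset $\L(M,d)$ or $\U(M,d)$ along their common boundary $\partial C(M,d)$. The second step is a cofinality/Kan-extension argument entirely parallel to Lemma \ref{lem:segal_excision}: since $X$ is $d$-Segal, the value $X_M = \lim X|\L(M,d)^{\op} = \lim X|\U(M,d)^{\op}$, so both $X|T^{\op}$ and $X|(T')^{\op}$ are right Kan extensions of their restriction to the subposet $(T \cap T')^{\op}\cup\{\text{tip }M\}$ — more precisely, $X_T \simeq X_{T\cap T'}$-style relative limits over a common diagram, and the $d$-Segal condition identifies the two contributions. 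Concretely, I would show $X_T \overset{\simeq}{\to} X_{T \cap T' \cup \{M\}} \overset{\simeq}{\leftarrow} X_{T'}$, where the middle term uses the simplex $\Delta^M$, and the outer equivalences come from the pointwise Kan extension formula whose relevant slice is exactly $\L(M,d)^{\op}$ or $\U(M,d)^{\op}$.

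The third step is to assemble: Rambau's connectivity theorem supplies a finite chain $\L([n],d) = T_0, T_1, \ldots, T_r = K$ where each $T_{i} \to T_{i+1}$ is a single flip; applying step two at each stage and composing with the hypothesis equivalence $X_n \to X_{\L([n],d)}$ gives $X_n \to X_K$ equivalent, via two-out-of-three in the diagram analogous to the one in the proof of Corollary \ref{cor:d-d+1}. I expect the main obstacle to be the bookkeeping in step two when the flip is not supported on a "full" local polytope in the naive sense — one must be careful that the flip's supporting configuration $M$ interacts correctly with the ambient triangulation, in particular that the link of the flip region in $T$ and in $T'$ coincide and that this link poset, glued onto either $\L(M,d)$ or $\U(M,d)$, genuinely yields a cofinal subposet for the Kan extension computation. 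A clean way around this is to invoke Lemma \ref{lem:excision} directly in dimension $d$ applied to the admissible complex $K$: write $K$ as an iterated stacking $K_1 \subset \cdots \subset K_m = K$ of $d$-simplices starting from $K_1 = \L([n],d-1)$, exactly as in Corollary \ref{cor:d-d+1}, but now using that $X$ is lower $(d-1)$-Segal — which it is, by Theorem \ref{thm:fully} applied with the roles reversed (a $d$-Segal object need not be $(d-1)$-Segal, so this route in fact does \emph{not} work directly and one must genuinely use the flip argument above). Thus the honest difficulty is purely the cofinality lemma for flips, which I would isolate as a separate sublemma: \emph{if $X$ is $d$-Segal and $T, T'$ differ by a single flip, then $X_T \simeq X_{T'}$ compatibly with the maps from $X_n$.} Granting that sublemma, the theorem follows immediately from Rambau's connectivity result.
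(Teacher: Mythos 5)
Your proposal is correct and follows essentially the same route as the paper: the paper isolates exactly your sublemma as Lemma \ref{lem:fully} (each bistellar flip $T \vartriangleleft T'$ sits under the admissible complex $R = T \cup \Delta^I$ in $C([n],d+1)$, and the lower/upper $d$-Segal conditions make $X_R \to X_T$ and $X_R \to X_{T'}$ equivalences via the Kan-extension argument of Lemma \ref{lem:segal_excision}), and then chains flips to $\U([n],d)$ using Rambau's connectivity theorem. Your parenthetical worry about the flip's support and link is handled automatically by defining the flip through the one-dimension-up complex $R$, and you correctly discard the dead-end route through lower $(d-1)$-Segality.
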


Again, we need some basic results from the theory of cyclic polytopes. To set the stage, 
let $T$ be a triangulation of $C([n],d)$ and let $I \in S([n],d+1)$ a $d+1$-simplex
such that $(\Delta^I)^- \subset T$. Then $R = T \cup \Delta^I$ is an admissible
subcomplex of $C([n],d+1)$ such that $T' = R^+$ is another triangulation of
$C([n],d)$ called the bistellar flip of $T$ along $I$. We write
\[
    T \vartriangleleft T'.
\]

\begin{lem}
    \label{lem:fully}
    Let $\C$ be an $\infty$-category with finite limits and let $X:
    \Delta^{\op} \to \C$ be a simplicial object. Let $T \vartriangleleft T'$ be
    triangulations of $C([n],d)$ related by a bistellar flip. 
    \begin{enumerate}
        \item Assume that $X$ is lower $d$-Segal. Then the morphism 
            \[
                 X_R \to X_T 
             \]
            is an equivalence in $\C$.
        \item Assume that $X$ is upper $d$-Segal. Then the morphism 
            \[
                 X_R \to X_{T'}
            \]
            is an equivalence in $\C$.
    \end{enumerate}
\end{lem}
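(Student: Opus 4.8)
The plan is to reduce Lemma \ref{lem:fully} to the already-established Lemma \ref{lem:segal_excision}, since a bistellar flip is precisely a one-simplex stacking in dimension $d+1$. Concretely, $R = T \cup \Delta^I$ is an admissible subcomplex of $C([n],d+1)$ with $R^- = T$ and $R^+ = T'$, and $I$ is a $(d+1)$-simplex with $(\Delta^I)^- \subset T = R^-$. This is exactly the configuration $K = R$, $L = T$ of Lemma \ref{lem:excision} (with the dimension parameter there set to $d+1$): $R$ is obtained from $T$ by stacking $\Delta^I$ along its lower hemisphere. So for part (1), I would apply Lemma \ref{lem:segal_excision} with $d$ replaced by $d+1$: if $X$ is lower $d$-Segal, then Lemma \ref{lem:segal_excision} directly yields that the inclusion $T \subset R$ induces an equivalence $X_R \overset{\simeq}{\to} X_T$. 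That settles part (1) immediately.

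For part (2), I would exploit the up–down symmetry. The subcomplex $R$ is also obtained from $T' = R^+$ by stacking $\Delta^I$ along its \emph{upper} hemisphere: indeed $(\Delta^I)^+ \subset R^+ = T'$, and removing $\Delta^I$ together with the interior of its lower hemisphere recovers $T'$. Applying the reflection of $\RR^{d+1}$ in the last coordinate (which interchanges the roles of upper and lower throughout the theory — it swaps $\L$ and $\U$, swaps $(\Delta^I)^+$ and $(\Delta^I)^-$, and swaps the relations $\prec$), the hypothesis that $X$ is upper $d$-Segal becomes the hypothesis that the reflected simplicial object is lower $d$-Segal, and the roles of $T$ and $T'$ are exchanged. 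Hence the mirror image of the argument in part (1) gives that the inclusion $T' \subset R$ induces an equivalence $X_R \overset{\simeq}{\to} X_{T'}$. Alternatively, and perhaps more cleanly in terms of bookkeeping, I would prove directly a "dual" version of Lemma \ref{lem:segal_excision}: if $X$ is upper $(d-1)$-Segal and $K = L \cup \Delta^I$ with $(\Delta^I)^+ \subset L^-$, then $X_K \to X_L$ is an equivalence, with the proof word-for-word the same except that cofinality is replaced by its dual (the inclusion $L \cup \{I\} \subset K$ is cofinal with each slice $J/i$ a single object $J \subset I$ — this part is unchanged, since it only uses that every simplex of $K$ not in $L\cup\{I\}$ is a face of $I$) and the right Kan extension computation now invokes $X_I \simeq \lim X|\U(I,d-1)^{\op}$.

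The main obstacle — really the only subtle point — is verifying that the cofinality and Kan-extension inputs of Lemma \ref{lem:segal_excision} genuinely transfer when $d$ is bumped to $d+1$ and, for part (2), when upper is swapped for lower. Here one must check that in the bistellar-flip situation every $(d+1)$-simplex of $R$ other than $I$ (and every lower-dimensional simplex not already in $T$, if any) is a face of $I$, so that the slice categories $J/i$ collapse to single points exactly as before; this is immediate from the construction of $R$ as $T \cup \Delta^I$, since the only simplices of $R$ not lying in $T$ are faces of $I$. Likewise, the identification of the relevant slice category for the pointwise Kan extension formula at the vertex $I$ with the poset $(\Delta^I)^-$ (resp. $(\Delta^I)^+$ in the dual case) is exactly as in Lemma \ref{lem:segal_excision}, so the lower (resp. upper) $d$-Segal condition closes the argument. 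Once these routine checks are in place, both parts follow formally, and there is nothing deeper to prove.
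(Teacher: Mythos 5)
Your proposal is correct and follows essentially the same route as the paper, whose entire proof is the observation that the statement is a special case of Lemma \ref{lem:segal_excision} (applied with the dimension shifted from $d$ to $d+1$). The only added value in your write-up is that you make explicit the upper/dual version of Lemma \ref{lem:segal_excision} needed for part (2), which the paper leaves implicit (consistent with its ``mutatis mutandis'' treatment of the upper case in the proof of Theorem \ref{thm:fully}); your verification that the bistellar-flip data $R = T \cup \Delta^I$, $T = R^-$, $T' = R^+$ matches the $(K,L)$ configuration of Lemma \ref{lem:excision} is accurate.
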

\begin{proof}
    This is a special case of Lemma \ref{lem:segal_excision}.
\end{proof}

\begin{proof}[Proof of Theorem \ref{thm:lowerupperfull}]
    Suppose that $T$ is a triangulation of $C([n],d)$. Then by \cite[Theorem
    1.1(i)]{rambau} there exists a sequence of bistellar flips
    \[
        T \vartriangleleft T_1 \vartriangleleft ... \vartriangleleft \U([n],d)
    \]
    connecting $T$ with the upper triangulation of $C([n],d)$. Combining Lemma \ref{lem:fully} with the equivalence
    \[
        X_n \overset{\simeq}{\lra}  X_{\U([n],d)}
    \]
    we deduce that $X_n \lra  X_T$ is an equivalence as well.
\end{proof}

\section{Higher correspondences}%
\label{sec:correspondences}

The goal of this section will be to establish an interpretation of the higher
Segal conditions in terms of higher correspondence categories. 
Our discussion is based on the classical simplicial combinatorics of
barycentric subdivision. Consider the functor
\[
    \Delta \to \sSet, [n] \mapsto \N(\P^*([n])^{\op})
\]
where $\P^*([n])$ denotes the poset of nonempty subsets of $[n]$. Its left Kan
extension along the Yoneda embedding $\Delta \to \sSet$ is the left adjoint of
an adjunction (cf. \cite{kan:css} for a classical appearance)
\[
    \sd: \sSet \lra \sSet: \co_{\infty}.
\]
Note that this is different from the {\em edgewise} subdivision, see Remark \ref{rem:edgewise} below.

\begin{con}
    \label{con:cor}
    Let $\C$ be an $\infty$-category with limits. Then the vertices of $\co_{\infty}(\C)$ coincide
    with the vertices of $\C$ while an edge in $\co_{\infty}(\C)$ corresponds to a
    diagram
    \[
    \begin{tikzcd}
        x_0 & \ar{l} x_{01} \ar{r} & x_1
    \end{tikzcd}
    \]
    in $\C$.
    A $2$-simplex in $\co_{\infty}(\C)$ corresponds to a coherent diagram
    \begin{equation}
        \label{eq:cor}
        \begin{tikzcd}
               &    & x_{1} &  & \\
               & x_{01}\ar{ur}\ar{dl} & x_{012} \ar{d}\ar{u}\ar{l}\ar{r}\ar{dll}\ar{drr}  & x_{12}\ar{ul}\ar{dr} & \\
            x_{0} & & \ar{ll} \ar{rr} x_{02} & & x_{2}
        \end{tikzcd}
    \end{equation}
    in $\C$. Note that the diagram \eqref{eq:cor} gives rise to a diagram
    \[
    \begin{tikzcd}
        & \ar{dl}\ar{dr} x_{01}\times_{x_1} x_{12} & \\
        x_0 & \ar{l}x_{012}\ar{r}\ar{d}\ar{u} &  x_2\\
            & x_{02}\ar{ur}\ar{ul} & 
    \end{tikzcd}
    \]
    which we may interpret as a globular $2$-cell in a higher category of
    correspondences: a correspondence ($2$-morphism) between the correspondences ($1$-morphisms)
    \[
    \begin{tikzcd}
        x_0 & \ar{l} x_{01}\times_{x_1} x_{12} \ar{r} & x_2
    \end{tikzcd}
    \]
    and
    \[
    \begin{tikzcd}
        x_0 & \ar{l} x_{02} \ar{r} & x_2.
    \end{tikzcd}
    \]
    In a similar fashion, every $n$-simplex 
    \[
        x: \P^*([n])^{\op} \to \C
    \]
    gives rise to an $n$-dimensional globe whose $d$-dimensional equator is given by
    \begin{equation}
        \label{eq:globular}
        \begin{tikzcd}
            & \ar{dl}\ar{dr} \lim x|\L([n],d) & \\
            \lim x|\L([n],d-1) &  & \lim x|\U([n],d-1) \\
                & \lim x|\U([n],d) \ar{ur}\ar{ul} & 
        \end{tikzcd}
    \end{equation}
    while the top $n$-dimensional cell is given by 
    \[
    \begin{tikzcd}
        & \ar{dl}\ar{dr} \lim x|\L([n],n-1) & \\
        \lim x|\L([n],n-2) & x_{\{0,1,...,n\}}\ar{r}\ar{l}\ar{u}\ar{d} & \lim x|\U([n],n-2) \\
            & \lim x|\U([n],n-1).\ar{ur}\ar{ul} & 
    \end{tikzcd}
    \]
    The legs of the correspondence
    \begin{equation}
        \label{eq:topcell}
        \begin{tikzcd}
            \lim x|\L([n],n-1) & x_{\{0,1,...,n\}}\ar{r}{u}\ar[swap]{l}{l} & \lim x|\U([n],n-1)
        \end{tikzcd}
    \end{equation}
    are precisely the upper, resp. lower, $(n-1)$-Segal maps induced by the
    higher Segal cones of Definition \ref{defi:higher_segal}.
\end{con}

\begin{defi}
    \label{defi:thin}
    Let $\C$ be an $\infty$-category with limits and let $\sigma_x$ be an $n$-simplex
    of $\co_{\infty}(\C)$ corresponding to a diagram
    \begin{equation}
        \label{eq:barysimplex}
        x: \P^*([n])^{\op} \to \C.
    \end{equation}
    Then $\sigma_x$ is called {\em lower} (resp. {\em upper}) {\em thin} if the map $l$ (resp. $u$) in 
    \eqref{eq:topcell} is an equivalence in $\C$. We call $\sigma_x$ {\em thin} if it is both lower and upper thin.
\end{defi}

The following result is due to J. Gödicke, Q. Ho, and W. Stern, and a detailed
proof will appear in a forthcoming paper by these authors. Here, we provide a sketch
of an argument emphasizing the connection to orientals. 

\begin{prop}
    \label{prop:complicial}
    Let $\C$ be an $\infty$-category with limits. Then the simplicial set
    $\co_{\infty}(\C)$, stratified with the thin simplices from Definition
    \ref{defi:thin}, is a complicial set (cf. \cite{verity:complicial, riehl:complicial}).
\end{prop}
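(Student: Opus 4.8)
The plan is to verify the defining axioms of a complicial set (stratified simplicial set) for $\co_\infty(\C)$ with the stratification by thin simplices, namely: (i) every degenerate simplex is thin; (ii) the thin simplices of dimension $1$ are exactly those sent to equivalences; (iii) the "complicial horn filler" conditions — every admissible horn $\Lambda^n_k \to \co_\infty(\C)$ has a filler, and the filler is thin; and (iv) the "thinness" axioms asserting that if certain faces of a simplex are thin then so is the simplex itself. Axioms (i) and (ii) are essentially immediate from the definition of $\co_\infty$: a degenerate $n$-simplex corresponds to a diagram $x\colon \P^*([n])^\op \to \C$ that is left Kan extended from a smaller complex along a degeneracy, forcing the legs $l$ and $u$ in \eqref{eq:topcell} to be equivalences. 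The real work is in the horn-filling and thinness axioms, and the whole point of the proof sketch should be that \emph{these reduce, via the adjunction $\sd \dashv \co_\infty$, to recognition problems about admissible subcomplexes of cyclic polytopes} — and hence to the combinatorics of \S\ref{sec:cyclic_polytopes}--\S\ref{sec:orientals}, in particular Lemma \ref{lem:excision}.

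First I would set up the translation. By the adjunction, a map $\Lambda^n_k \to \co_\infty(\C)$ is the same as a map $\sd(\Lambda^n_k) \to \C$, and a filler corresponds to extending along $\sd(\Lambda^n_k) \hookrightarrow \sd(\Delta^n)$. Now $\sd(\Delta^n) = \N(\P^*([n])^\op)$, and $\sd$ of the $k$-th horn is the nerve of (the opposite of) the subposet of $\P^*([n])$ obtained by deleting the top face and the codimension-one face opposite $k$ — which is precisely the poset underlying one of the hemispherical complexes $\L$ or $\U$, or a related admissible subcomplex, depending on the parity of $k$ and $n$. The key geometric input is then exactly the statement underlying the oriental picture: by iterating Lemma \ref{lem:excision}, the inclusion of such a hemispherical admissible subcomplex into $\P^*([n])$ exhibits the larger poset as built from the smaller one by successively "stacking simplices," and dually the top simplex $x_{\{0,\dots,n\}}$ together with the data of the horn admits a right Kan extension along each such stacking step. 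Concretely: the horn provides $x$ on all proper faces; one extends to $x_{\{0,\dots,n\}}$ by taking $x_{\{0,\dots,n\}} := \lim x|K^\op$ for the appropriate admissible subcomplex $K$ (this is where $\C$ having limits is used), and the resulting $n$-simplex is thin because, when $K = \L([n],n-1)$ say, the leg $l$ of \eqref{eq:topcell} is an equivalence by construction.

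For the thinness axioms — e.g. if an admissible horn of a simplex $\sigma_x$ has all its "inner" faces thin then $\sigma_x$ is thin — I would argue that thinness of the faces forces, via the Segal-type limit identities and a cofinality argument paralleling Lemma \ref{lem:segal_excision}, that the relevant iterated limit computing $\lim x|\L([n],n-1)$ agrees with the one over $\U([n],n-1)$, both being computed by $x_{\{0,\dots,n\}}$; this is morally the "$d$-Segal $\Rightarrow$ all triangulations" phenomenon of Theorem \ref{thm:lowerupperfull} applied fiberwise inside a single $n$-simplex. I expect \textbf{the main obstacle} to be bookkeeping: matching up, for each admissible horn index $k$, exactly which admissible subcomplex of $C([n],n-1)$ (equivalently which partial pasting composite in the oriental $\O_n$) governs the extension, keeping track of the orientation/parity so that the correct leg ($l$ versus $u$) comes out as an equivalence, and checking that the filler one produces is compatible with \emph{all} the faces of the horn, not just the hemispherical ones — this last compatibility is where one invokes the uniqueness-of-factorization part of the proof of Theorem \ref{thm:cyclic_oriental} (the Eckmann--Hilton/interchange argument), guaranteeing the pasting composite is independent of the chosen decomposition \eqref{eq:atomic_dec}. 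Once the dictionary "admissible horn $\leftrightarrow$ admissible subcomplex $\leftrightarrow$ pasting scheme in $\O_n$" is in place, each complicial axiom becomes a direct consequence of Lemma \ref{lem:excision} together with the basic cofinality manipulations already used in \S\ref{sec:the_interplay_of_higher_segal_conditions}.
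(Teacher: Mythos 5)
Your overall strategy --- translating horns through the adjunction $\sd \dashv \co_{\infty}$ into extension problems for diagrams indexed by subposets of $\P^*([n])$, and then invoking the cyclic-polytope combinatorics of Lemma \ref{lem:excision} and the decomposition \eqref{eq:atomic_dec} --- is the right frame, and your treatment of the thinness-propagation axiom via \eqref{eq:atomic_dec} and two-out-of-three is essentially what the paper does. But there is a genuine gap in your horn-filling step. The poset of nondegenerate simplices of $\Lambda^n_k$ is $\P^*([n])$ with \emph{only} the two elements $[n]$ and $[n]\setminus\{k\}$ removed; it is not the poset underlying a hemispherical complex $\L([n],n-1)$ or $\U([n],n-1)$ (those omit roughly half the facets, whereas the horn omits exactly one). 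Consequently your recipe addresses only half of the extension problem: you must supply a value not just at the top cell $[n]$ but also at the missing face $\partial_k \Delta^n = [n]\setminus\{k\}$, together with all its structure maps, and the proposal never says how. The paper does this in two Kan-extension steps: first a \emph{right} Kan extension adjoining $[n]$, which sets $x'([n]) \simeq \lim_{I \in \P^n_k} x_I$ --- the limit over the \emph{entire} horn poset, not over a hemisphere --- and then a \emph{left} Kan extension adjoining $\partial_k$, which forces $x''(\partial_k) \simeq x'([n])$ because $[n]$ is the unique element of the enlarged poset containing $\partial_k$.

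This difference is not cosmetic. Defining $x_{[n]}$ as $\lim x|K^{\op}$ for a hemispherical $K$ buys you one leg of \eqref{eq:topcell} as an equivalence ``by construction,'' but it creates exactly the compatibility problem you flag at the end (compatibility with the faces of the horn not lying in $K$), and resolving that by the Eckmann--Hilton/interchange argument is not clearly available here: the horn datum is an arbitrary coherent diagram in $\C$, not a pasting diagram in a strict $\omega$-category, so the uniqueness-of-factorization argument from the proof of Theorem \ref{thm:cyclic_oriental} does not directly apply. Taking the right Kan extension over the full horn poset produces a cone over \emph{all} faces at once and makes this compatibility automatic; thinness of the resulting filler is then deduced afterwards from the imposed thinness hypotheses on the horn (it is not free from the construction, since being thin requires \emph{both} legs $l$ and $u$ of \eqref{eq:topcell} to be equivalences). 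A minor further point: ``thin $1$-simplices are exactly the equivalences'' is not among the axioms to be checked; the relevant conditions are that degeneracies are thin, that admissible horns admit thin fillers, and the thinness condition on the $k$-th face of a suitably thin simplex.
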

\begin{proof}
    We set $\D := \co_{\infty}(\C)$ and sketch how to verify the filling
    conditions that needs to be satisfied (cf. Definition 2.13 in
    \cite{riehl:complicial}). Let $n \ge 3$, $0 \le i \le n$, and let 
    \[
        \tau: \Lambda^n_i \to \D
    \]
    be a horn in $\D$ such that all simplices in $\Lambda^n_i$ containing 
    the vertices $\{i-1,i,i+1\} \cap [n]$ are thin. We denote by $\P^n_i$ the poset of
    nondegenerate simplices of $\Lambda^n_i$. The horn $\tau: \Lambda^n_i \to \D$ then
    corresponds to a diagram
    \[
        x: (\P^n_i)^{\op} \to \C.
    \]
    Consider the full inclusions of posets
    \[
        (\P^n_i)^{\op} \overset{\alpha}{\hra} (\P^n_i)^{\op} \cup \{[n]\} \overset{\beta}{\hra} \P^*([n])^{\op}.
    \]
    We define
    \begin{enumerate}
        \item $x': (\P^n_i)^{\op} \cup \{[n]\} \to \C$ to be the right Kan extension of $x$ along $\alpha$, so that 
            \[
                x'([n]) \simeq \lim_{I \in \P^n_i} x_I,
            \]
        \item $x'': \P^*([n])^{\op} \to \C$ to be the left Kan extension of $x'$ along $\beta$, so that
            \[
                x''(\partial_i) \simeq x'([n]).
            \]
    \end{enumerate}
    Then $x''$ corresponds to an $n$-simplex $\sigma: \Delta^n \to \D$
    extending $\tau$. Using the imposed thinness conditions, it can now be
    shown that the simplex $\sigma$ is indeed thin. 

    The second type of filling condition a complicial set needs to satisfy is the
    following: For $\sigma: \Delta^n \to \D$ is an $n$-simplex and $0 \le i \le
    n$ such that 
    \begin{enumerate}[label= \arabic *.]
        \item $\sigma$ maps all subsimplices containing $\{i-1,i,i+1\} \cap [n]$ to thin simplices, and
        \item $\sigma$ maps the faces $\partial_{i-1} \Delta^n$ and $\partial_{i+1} \Delta^n$ to thin simplices,
    \end{enumerate} 
    then the simplex $\sigma \circ \partial_{i}$ is thin.

    To verify this condition suppose that the face $\partial_i \Delta^n$ belongs to the
    lower hemisphere $\L([n],n-1)$ of $\Delta^n$ and let 
    \[
        x: \P^*([n])^{\op} \to \C
    \]
    be the diagram corresponding to $\sigma$. We consider the diagram
    \begin{equation}
        \begin{tikzcd}
            & \ar[swap]{dl}{s_l}\ar{dr}{t_l} \lim x|\L([n],n-1) & \\
            \lim x|\L([n],n-2) & x_{\{0,1,...,n\}}\ar{r}\ar{l}\ar{u}{s}\ar[swap]{d}{t} & \lim x|\U([n],n-2) \\
                               & \lim x|\U([n],n-1).\ar[swap]{ur}{s_u}\ar{ul}{t_u} & 
        \end{tikzcd}
    \end{equation}
    where $s$ and $t$ are equivalences, due to the thinness of $\sigma$. By
    decomposing $\U([n],n-1)$ as in \eqref{eq:atomic_dec} and using the
    assumption that all faces in $\U([n],n-1)$ are thin, we deduce that $t_u$
    and $s_u$ are equivalences so that, by two/three, the maps $s_l$ and $t_l$
    are equivalences as well. Now decomposing $\L([n],n-1)$ as in
    \eqref{eq:atomic_dec} as well, and letting $K_i$ denote the subcomplex
    containing the face $\partial_i$, we deduce, again by two/three, that the
    legs in the correpondence
    \[
        \begin{tikzcd}
            x|(K_i)^-  &\ar{l}{\simeq}   x|K_i  \ar{r}{\simeq} & x|(K_i)^+ 
        \end{tikzcd}
    \]
    are equivalences.
    Finally, the imposed thinness conditions on $\sigma$ imply that the latter
    correspondence is equivalent to the correspondence
    \[
        \begin{tikzcd}
            x|(\partial_i \Delta^n)^-  &\ar{l}   x|\partial_i \Delta^n  \ar{r}{\simeq} & x|(\partial_i \Delta^n)^+ 
        \end{tikzcd}
    \]
    so that its legs must be equivalences as well, showing that the simplex
    $\sigma \circ \partial_i$ is thin.
\end{proof}

According to Proposition \ref{prop:complicial}, we may interpret
$\co_{\infty}(\C)$ as a model for the $(\infty,\omega)$-category higher
correspondences in $\C$ (cf. \cite{loubaton:complicialmodel}). Various
interesting truncations of $\co_{\infty}(\C)$ will be relevant for us:
\begin{itemize}
    \item Denote by $\co_n(\C) \subset \co_{\infty}(\C)$ the simplicial subset
        consisting of those simplices all of whose $d$-subsimplices for $d >
        n$ are thin. Then $\co_n(\C)$ models an $(\infty,n)$-category of higher
        correspondences where all correspondences above globular dimension $n$
        (cf. \eqref{eq:globular}) are invertible. 
    \item Denote by $\co_n^l(\C) \subset \co_{n}(\C)$ the simplicial subset
        consisting of those simplices all of whose $(n-1)$-subsimplices are
        lower thin. Then $\co_n^l(\C)$ models an $(\infty,n)$-category of higher
        correspondences where both legs of all correspondences above globular
        dimension $n$ (cf. \eqref{eq:globular}) are invertible and, in
        addition, the lower legs of the correspondences in globular dimension
        $n-1$ are invertible. In particular, by inverting the lower leg, we may
        interpret such a correspondence as a morphism from the lower to the
        upper hemisphere of dimension $n-2$. 
    \item Similarly, denote by $\co_n^u(\C) \subset \co_{n}(\C)$ the simplicial subset
        consisting of those simplices all of whose $(n-1)$-subsimplices are
        upper thin. In this case, by inverting the upper leg of a
        correspondence in globular dimension $n-1$, we may interpret such a
        correspondence as a morphism from the upper to the lower hemisphere of
        dimension $n-2$. 
\end{itemize}

\begin{rem}
    \label{rem:edgewise}
    In \cite{DK12}, another variant of higher correspondence categories is
    used when studying $2$-Segal spaces. There, we used the {\em edgewise}
    subdivision in the form of an adjunction
    \[
        \on{tw}: \sSet \llra \sSet: \overline{\co}
    \]
    instead of the {\em barycentric} subdivision to construct an
    $\infty$-category of correspondences (aka spans). The relation between the
    two constructions is that $\overline{\co}$ models an $(\infty,2)$-category
    (all simplices in dimensions $>2$ are already thin) which is equivalent
    to the $(\infty,2)$-category $\co_2^u(\C)$ as defined here. This already
    suggests that the construction $\overline{\co}$ is too limited when
    studying higher Segal spaces in dimensions $>2$.
\end{rem}

\begin{rem}
    \label{rem:segalthin}
    Note that the Segal conditions are closely related to the thinness
    conditions in Definition \ref{defi:thin}: both are given in terms of limits
    of diagrams parameterized by the poset of facets of the upper (resp. lower)
    hemispheres of cyclic polytopes. A precise relation, along with its
    structural relevance, will be discussed in \S \ref{sec:monads}.
\end{rem}

\section{Monads}%
\label{sec:monads}

In this section, we explain how to identify higher Segal objects with certain
lax monads in higher correspondences categories. Variations of this perspective
have been established in \cite{stern:algebra, goedicke:bimodules, gale:lax}. 

In our approach, we will show how to construct from {\em any} simplicial object
in $\C$ a lax monad in the $(\infty,\omega)$-category $\co_{\infty}(\C)$ of
correspondences in $\C$. The lower and upper $n$-Segal conditions are then
responsible for yielding monads in the $(\infty,n)$-categorical truncations
$\co_n^l(\C)$ and $\co_n^u(\C)$, respectively. Finally, we then show that this
construction establishes an equivalence between suitably defined
$\infty$-categories. There doesn't seem to be a standard concept of a lax monad
available within the framework of $(\infty,\omega)$-categories so we provide a
(somewhat ad-hoc) definition suitable for our purposes. With more work, it will
be possible to put our constructions in a model independent context, but we do not
worry about this here. 

To put our to be used model of a monad in context, we recall (one possible
definition of) the notion of a monad valued in an $(\infty,2)$-category $\D$.
Informally, a monad consists of choices of
\begin{itemize}
    \item an object $x$ in $\D$,
    \item an endomorphism $M: x \to x$ in $\D$,
    \item natural transformations $\mu: M \circ M \Rightarrow M$ and $\epsilon: \id_x \Rightarrow M$,
    \item a coherent system of higher associativity and unitality constraints.
\end{itemize}
Passing to the monoidal $\infty$-category of endomorphisms $\E^{\otimes} = \D(x,x)$, a
monad can be identified with an associative algebra in $\E^{\otimes}$. Thinking
of $\E^{\otimes}$ as modelled by a coCartesian fibration $q$ over $\Delta^{\op}$,
one may define such algebra objects as sections that map convex maps in
$\Delta$ to $q$-coCartesian edges (cf. \cite[4.1]{lurie:ha}). We provide a
reformulation which can be adapted to our $(\infty,\omega)$-categorical
context: To this end, we replace $\E^{\otimes} \to \Delta^{\op}$ by a relative
subdivision as follows. 

Let $\kappa: \tot(\P^*) \to \Delta$ denote the covariant Grothendieck construction
of the functor
\begin{equation}
    \label{eq:grothendieck}
    \P^*: \Delta \to \Cat, \; [n] \mapsto \P^*([n])
\end{equation}
where $\P^*([n])$ denotes the poset of nonempty subsets of $[n]$. Further
define the functor
\begin{equation}
    \label{eq:nonstandardproj}
\lambda: \tot(\P^*) \to \Delta, ([n],I = \{0 \le i_0 < i_1 < ... < i_k \le n\}) \mapsto [k].
\end{equation}
Then define $\rsd(q): \rsd(\E^{\otimes}) \to \Delta^{\op}$ via the adjunction 
\[
    \Hom_{\Delta^{\op}}(K, \rsd(\E^{\otimes})) \cong  \Hom_{\Delta^{\op}}(K \times_{\Delta^{\op}} \tot(\P^*), \E^{\otimes})'
\]
where 
\begin{enumerate}
    \item the fiber product over $\Delta^{\op}$ is taken with respect to $\kappa$
        and equipped with the map to $\Delta^{\op}$ induced by $\lambda$,
    \item the symbol $'$ indicates that we take the subset consisting of maps 
        \[
            f: K \times_{\Delta^{\op}} \tot(\P^*) \lra \E^{\otimes}
        \]
        satisfying the following condition: for every vertex $x \in K_0$, the
        restriction of $f$ to $\{x\} \times_{\Delta^{\op}} \tot(\P^*)$ maps
        convex inclusions $I \subset J$ to $q$-coCartesian edges in $\E^{\otimes}$. 
\end{enumerate} 

\begin{exa}
    \label{exa:bary}
    A vertex of $\rsd(\E^{\otimes})$ over $[2] \in \Delta$ corresponds to a diagram in $\E^{\otimes}$
    \begin{equation}
        \scriptsize
        \label{eq:barytwo}
    \begin{tikzcd}[column sep={6em, between origins},row sep={3em, between origins}, nodes in empty cells]
        &   &   * &    & \\
        &&&&\\
        & A'\ar{uur}{!} \ar[swap]{ddl}{!} & & B'\ar[swap]{uul}{!} \ar{ddr}{!}  & \\
        &  & \ar{ul}{!} (A,B) \ar{d} \ar[swap]{ur}{!} &  & \\
        * & & \ar[swap]{ll}{!} C'\ar{rr}{!} & & *
    \end{tikzcd}
    \end{equation}
    where the edges marked $!$ are coCartesian: For example the edge $(A,B) \to
    A'$ covers the convex edge $[2] \leftarrow [1]$ given by the face map
    $\partial_2$ and corresponds to an equivalence $A \to A'$. On the other
    hand, the edge $(A,B) \to C$, which covers the nonconvex edge $\partial_1$,
    is {\em not} coCartesian and corresponds to a morphism $A \otimes B \to C$
    in $\E$. 
\end{exa}

\begin{prop}
    \label{prop:alg}
    Let $q: \E^{\otimes} \to \Delta^{\op}$ be a monoidal $\infty$-category.
    Then there is an equivalence between the $\infty$-categories of
    \begin{enumerate}
        \item sections of $q$ mapping convex edges in $\Delta$ to
            $q$-coCartesian edges in $\E^{\otimes}$ (i.e. associative algebras
            in $\E^{\otimes}$), 
        \item sections of $\rsd(q)$ mapping injective maps in $\Delta$ to
            $\rsd(q)$-coCartesian edges in $\rsd(\E^{\otimes})$. 
    \end{enumerate} 
\end{prop}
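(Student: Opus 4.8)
The plan is to produce a Quillen-type adjunction argument relating sections of $q$ to sections of $\rsd(q)$, and then to match up the two coCartesian conditions. First I would unpack the adjunction defining $\rsd(\E^{\otimes})$: a section of $\rsd(q)$ over $\Delta^{\op}$ is the same thing as a map $\tot(\P^*) \to \E^{\otimes}$ over $\Delta^{\op}$ (with the map on the left induced by $\lambda$ and the map on the right by $q$) satisfying the decoration condition that convex inclusions $I \subset J$ inside a fixed fiber $\{[n]\} \times \P^*([n])$ go to $q$-coCartesian edges. So the problem becomes: compare sections of $q$ with a suitable class of functors out of $\tot(\P^*)$. The link is the functor $\lambda: \tot(\P^*) \to \Delta$ of \eqref{eq:nonstandardproj}, which on each fiber over $[n]$ records the "size $-1$" of the subset $I \subset [n]$; composing a section $s$ of $q$ with $\lambda$ (covariantly, after the op) gives a functor $\tot(\P^*)^{\op} \to \E^{\otimes}$, and this is the candidate inverse-image construction. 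The key structural input is that $\lambda$, restricted to each fiber $\P^*([n])$ of $\kappa$, is a coCartesian fibration onto $\Delta^{\op}$-over-$[n]$ whose fibers have initial objects (the subset $I$ itself, among subsets that project to a fixed $[k]$ via $\lambda$): this is precisely what makes the convex edges the relevant coCartesian ones.

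Second, I would run the comparison at the level of the total relative subdivision. Pulling back $q$ along $\lambda$ and then applying the decoration condition, a section of $\rsd(q)$ amounts to a functor $F: \tot(\P^*)^{\op} \to \E^{\otimes}$ over $\Delta^{\op}$ that sends the "convex" maps (in each $\P^*([n])$) to $q$-coCartesian edges. Because each such fiberwise poset $\P^*([n])$, with the convex maps inverted, has a terminal object (over $\Delta^{\op}$) onto which everything coCartesianly maps — namely the full subset $[n]$ — the functor $F$ is forced, up to contractible choice, to be a left Kan extension of its restriction along the inclusion of those terminal objects. That restriction is exactly a section of $q$. So the unit and counit of the $(\sd, \co)$-type adjunction, restricted to the decorated subsets, are equivalences. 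This is the heart of the argument and I would phrase it as: the inclusion of the "spine of terminal objects" $\Delta \hookrightarrow \tot(\P^*)^{\op}$ is such that restriction along it, between the relevant decorated functor categories, is an equivalence, with inverse given by (fiberwise) left Kan extension.

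Third, I would check that the two coCartesian conditions correspond under this equivalence. On the algebra side (item (1)) the condition is that convex edges in $\Delta$ go to $q$-coCartesian edges. Under the comparison, a convex edge $[m] \to [n]$ in $\Delta$ is witnessed, inside $\tot(\P^*)$, by an injective map of subsets (the convex inclusion realizing $[m]$ as an interval of $[n]$); conversely every injective map in $\Delta$ lifts, fiberwise-terminally, to such a convex inclusion. So "maps convex edges to coCartesian edges" for a section of $q$ translates exactly into "maps injective maps to $\rsd(q)$-coCartesian edges" for the corresponding section of $\rsd(q)$ — this is the matching asserted in item (2). The bookkeeping here, keeping straight which edges in $\tot(\P^*)$ are $\lambda$-coCartesian versus $\kappa$-coCartesian and how they assemble into $\rsd(q)$-coCartesian edges, is the most delicate part and the main obstacle; I expect to need Example \ref{exa:bary} as the running illustration (the edge $(A,B) \to A'$ over the convex $\partial_2$ being coCartesian/an equivalence, versus $(A,B) \to C'$ over the non-convex $\partial_1$ being a genuine multiplication $A \otimes B \to C$). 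Once the edge dictionary is established, the equivalence of $\infty$-categories in the statement follows formally, since both sides are full subcategories (cut out by an edge condition) of equivalent functor $\infty$-categories, and the equivalence respects that condition.
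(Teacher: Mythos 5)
Your overall architecture is the same as the paper's: unpack a section of $\rsd(q)$ as a decorated functor $F:\tot(\P^*)^{\op}\to\E^{\otimes}$ over $\Delta^{\op}$ and compare with sections of $q$ via $\lambda$ (the paper isolates this comparison as Lemma \ref{lem:localization}). But the central step of your argument, and the matching of conditions that follows from it, are both wrong as stated. You claim that the decoration built into $\rsd(q)$ --- fiberwise convex inclusions $I\subset J$ go to $q$-coCartesian edges --- already forces $F$ to be a Kan extension of its restriction to the spine $[n]\mapsto([n],[n])$, because ``everything coCartesianly maps onto the terminal object $[n]$''. It does not: only the \emph{convex} subsets of $[n]$ are linked to $[n]$ by decorated edges, and the value of $F$ at a non-convex subset is genuinely independent data. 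Example \ref{exa:bary}, which you cite, is precisely a counterexample: the object $C'$ sitting over the non-convex subset $\{0,2\}$ receives the non-coCartesian edge $A\otimes B\to C'$ and is not determined by $(A,B)$. So restriction to the spine is not an equivalence on all sections of $\rsd(q)$, only on those already satisfying condition (2).

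Consequently your third step matches the wrong pairs of conditions. The correct dictionary is: condition (2) (injective maps in $\Delta$ go to $\rsd(q)$-coCartesian edges) is what forces $F$ to invert the morphisms $([k],[k])\to([n],I)$ appearing in the proof of Lemma \ref{lem:localization}, hence to be pulled back along $\lambda$ from \emph{some} section $s$ of $q$ --- with no algebra condition imposed yet. It is the decoration already built into the definition of $\rsd(\E^{\otimes})$ that then translates into the algebra condition on $s$: a fiberwise convex inclusion $I\subset J$ is sent by $\lambda$ to a convex edge $[|I|-1]\to[|J|-1]$ of $\Delta$, and $F=\lambda^*s$ sends it to a $q$-coCartesian edge if and only if $s$ does. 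Your identification ``convex edges coCartesian for $s$ $\Leftrightarrow$ injective maps coCartesian for the section of $\rsd(q)$'' crosses these two wires. The repair is exactly the paper's route: first use condition (2) together with Lemma \ref{lem:localization} to identify sections of $\rsd(q)$ satisfying (2) with arbitrary sections of $q$, and only then observe that the built-in decoration corresponds to the associative algebra condition.
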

\begin{proof}
    By definition, sections of $\rsd(q)$ can be identified with diagrams 
    \[
        \begin{tikzcd}
            \tot(\P^*)^{\op} \ar{rr}\ar[swap]{dr}{\lambda^{\op}} & & \E^{\otimes} \ar{dl}\\
                                     & \Delta^{\op}. & 
        \end{tikzcd}
    \]
    The statement then follows from Lemma \ref{lem:localization}.
\end{proof}

To obtain an analogous concept of a {\em lax monad} in the
$(\infty,\omega)$-category $\co_{\infty}(\C)$, we first replace the defining
simplicial set by a simplicial $\infty$-category: While the set of
$n$-simplices $\co_{\infty}(\C)$ is defined by the formula
\[
    \co_{\infty}(\C)_n = \Hom_{\sSet}(\sd(\Delta^n), \C) 
\]
we now define
\begin{equation}
    \label{eq:co_thick}
    \underline{\co_{\infty}}(\C)_n = \underline{\Hom}_{\sSet}(\sd(\Delta^n), \C) 
\end{equation}
to be the simplicial set of maps which, since $\C$ is an $\infty$-category,
will itself be an $\infty$-category. We thus obtain a strict functor
\[
    \underline{\co_{\infty}}(\C): \Delta^{\op} \lra \sSet
\]
taking values in $\infty$-categories. We finally pass to the Grothendieck
construction (relative nerve) to obtain a coCartesian fibration
\begin{equation}
    \label{eq:thick_tot}
    \pi: \tot(\underline{\co_{\infty}}(\C)) \to \Delta^{\op}
\end{equation}
which will play an analogous role to the fibration $\rsd(q)$ above. 

\begin{defi}
    \label{defi:label}
    A {\em lax monad} in $\underline{\co_{\infty}}(\C)$ is a section of $\pi$ which maps all
    injective maps in $\Delta$ to $\pi$-coCartesian edges. 
\end{defi}

\begin{rem}
    \label{rem:model}
    Further note, that the fibration $\pi$ encodes more than simply the
    $(\infty,\omega)$-category $\co_{\infty}(\C)$ but rather a double
    categorical refinement that keeps track of morphisms in the original
    category $\C$ and leads to the right amount of ``laxness'' to make direct
    connection to higher Segal objects in $\C$. Hence our definition of monad
    should be regarded as dependent on this additional data.
\end{rem}

\begin{thm} 
    \label{thm:monad}
    Let $\C$ be an $\infty$-category with limits. 
    \begin{enumerate}
        \item\label{monad:it:1} There is an equivalence of $\infty$-categories 
           \begin{equation}
               \label{eq:monad}
               \Fun(\Delta^{\op},\C) \overset{\simeq}{\lra} \left\{\text{monads in $\underline{\co_{\infty}}(\C)$}\right\}\!,\; X \mapsto M_X
           \end{equation}
       \item\label{monad:it:2} Under the equivalence \eqref{eq:monad}, lower and upper $d$-Segal
           objects in $\C$ get identified with monads in
           $\underline{\co_{d}^l}(\C)$ and $\underline{\co_{d}^r}(\C)$,
           respectively.
    \end{enumerate}
\end{thm}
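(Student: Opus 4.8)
The plan is to construct the functor in \eqref{eq:monad} by unwinding the two subdivisions involved and recognizing that both the source and the target are, after the dust settles, described by the same kind of data: a functor out of a Grothendieck construction of the powerset functor $\P^*$, subject to a coCartesian-type condition on a distinguished class of edges. First I would make precise the comparison between the simplicial $\infty$-category $\underline{\co_\infty}(\C)$ and the relative subdivision machinery from \S\ref{sec:monads}: by \eqref{eq:co_thick}, an $n$-simplex of $\underline{\co_\infty}(\C)$ is a functor $\sd(\Delta^n) = \N(\P^*([n])^{\op}) \to \C$, so passing to the Grothendieck construction $\pi: \tot(\underline{\co_\infty}(\C)) \to \Delta^{\op}$ and then to sections, a section of $\pi$ is the same as a functor $\tot(\P^*)^{\op} \to \C$ lying over $\Delta^{\op}$ via $\lambda^{\op}$ (exactly as in the proof of Proposition \ref{prop:alg}, but now with target $\C$ rather than a monoidal fibration $\E^{\otimes}$). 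The coCartesian condition in Definition \ref{defi:label} translates, under this identification, into the requirement that injective maps $[m] \hra [n]$ be sent to limit-preserving comparison maps; since $\C$ has all limits, one checks that this is automatically satisfied for every section, OR pins down precisely the subcategory. I would then observe that a functor $\tot(\P^*)^{\op} \to \C$ over $\Delta^{\op}$ is, by the adjunction defining $\sd \dashv \co_\infty$ and unwinding the Grothendieck construction fiberwise, nothing but a simplicial object $\Delta^{\op} \to \C$ together with no extra data — i.e. the restriction-to-fibers/assembly gives the equivalence \eqref{eq:monad}. Concretely, a simplicial object $X$ produces $M_X$ by setting $(M_X)_I := X_{p(I)}$ on each object $([n], I)$ of $\tot(\P^*)$ (using the notation of Remark \ref{rem:convenient}), with the structure maps induced by $X$; one must check this assembles to a genuine section of $\pi$ and that the assignment $X \mapsto M_X$ is (the object level of) an equivalence of $\infty$-categories, with inverse given by restricting a lax monad to the fibers $\{[1]\} \subset \Delta$ in the appropriate sense — or rather, reconstructing $X_n$ from $M_X$ evaluated at the top cell $([n],[n])$.

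For part \ref{monad:it:2}, the key is the dictionary set up in \S\ref{sec:correspondences} between the higher Segal maps and the legs of the top correspondence cell \eqref{eq:topcell}. By Construction \ref{con:cor}, an $n$-simplex $x: \P^*([n])^{\op} \to \C$ of $\co_\infty(\C)$ has top cell whose legs $l$ and $u$ are precisely the lower and upper $(n-1)$-Segal maps $X_n \to X_{\L([n],n-1)}$ and $X_n \to X_{\U([n],n-1)}$. Now the simplices of $M_X$ lying over $[n] \in \Delta$ are, under the identification above, governed by the simplices $x = X|\P^*([n])^{\op}$; so $M_X$ lands in the truncation $\underline{\co_d^l}(\C)$ exactly when, for every $n > d$, all the relevant $(n-1)$-dimensional subsimplices are lower thin — equivalently, by Definition \ref{defi:thin}, when the maps $l$ above are equivalences for all those subsimplices. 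By Theorem \ref{thm:lowerupperfull} (and the cofinality/triangulation arguments behind Theorem \ref{thm:fully}), it suffices to check this on the specific complexes $\L$ and $\U$: thus $M_X$ is a monad in $\underline{\co_d^l}(\C)$ iff $X$ is lower $d$-Segal, and symmetrically for upper $d$-Segal and $\underline{\co_d^r}(\C)$. I would spell this out by tracing a single lower $d$-Segal map through the identification and invoking Theorem \ref{thm:fully} to upgrade from "$\L([n],d)$ and $\U([n],d)$" to "all $d$-subsimplices of all $n$-simplices of $M_X$ with $n > d$."

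The main obstacle will be the careful bookkeeping in the first part: verifying that the adjunction $\sd \dashv \co_\infty$, once promoted to the enriched/relative statement \eqref{eq:co_thick} and composed with the relative nerve $\tot(\underline{\co_\infty}(\C)) \to \Delta^{\op}$, really does produce an \emph{equivalence} of $\infty$-categories $\Fun(\Delta^{\op},\C) \xrightarrow{\simeq} \{\text{monads}\}$ and not merely a fully faithful functor or an equivalence on homotopy categories. The subtlety is that a section of $\pi$ a priori carries more data than a simplicial object — it records, for each $[n]$, an entire functor on $\P^*([n])^{\op}$, including all the "interior" vertices $x_I$ for $I$ not of the form $[n]$ — and one must argue that the coCartesian condition on injective maps forces these interior values to be canonically determined (as iterated limits) by the values $X_n$ on the top cells, so that the space of such sections collapses to $\Fun(\Delta^{\op},\C)$. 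This is morally a statement that the relevant left/right Kan extensions exist and are computed pointwise, exactly the kind of argument used in the proof of Proposition \ref{prop:complicial}; I expect it can be isolated into a lemma asserting that $\rsd$ (or its thickened variant $\tot(\underline{\co_\infty}(-))$) implements a localization, parallel to the unnamed "Lemma \ref{lem:localization}" invoked in the proof of Proposition \ref{prop:alg}, and I would state and use that lemma here as well.
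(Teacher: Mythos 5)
Your proposal follows essentially the same route as the paper: sections of $\pi$ are identified with functors $\tot(\P^*)^{\op} \to \C$, the coCartesian-on-injectives condition is recognized as carving out exactly the essential image of $\lambda^*$, this identification is packaged as a localization lemma (precisely the paper's Lemma \ref{lem:localization}), and part \ref{monad:it:2} then follows from the Segal--thinness dictionary of \S\ref{sec:correspondences} together with Theorem \ref{thm:fully}. The one correction: discard the ``automatically satisfied'' alternative and the ``iterated limits'' picture --- the interior values $x_I$ are not determined as limits but are forced by coCartesian transport to agree with the lower-dimensional top-cell values $X_{|I|-1}$, so the right Kan extension along $\lambda$ is computed by evaluation at the initial object $([k],[k])$ of each slice rather than by any nontrivial limit.
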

\begin{proof}
    As above, let $\tot(\P^*) \to \Delta$ denote the covariant Grothendieck construction
    of the functor \eqref{eq:grothendieck}. Unravelling the defining formula
    \eqref{eq:co_thick} of $\underline{\co}_{\infty}(\C)$, we observe that a
    section of \eqref{eq:thick_tot} corresponds to a functor
    \[
        \tot(\P^*)^{\op} \to \C.
    \]
    Pullback along the functor $\lambda$ from \eqref{eq:nonstandardproj}
    provides a functor of $\infty$-categories
    \[
        \lambda^*: \Fun(\Delta^{\op}, \C) \to \Fun(\tot(\P^*)^{\op}, \C).
    \]
    For a simplicial object $X \in \Fun(\Delta^{\op}, \C)$, the section of
    $\pi$ corresponding to $\lambda^*(X)$ maps injective maps in $\Delta$ to
    coCartesian edges: this reduces to the fact that, for $\phi: [m] \to [n]$
    injective, and $I \in \P^*([m])$, the morphism $([m],I) \to ([n],\phi(I))$
    in $\tot(\P^*)$ is mapped under $\lambda$ to an identity morphism in
    $\Delta$. The statement of \ref{monad:it:1} now follows from Lemma
    \ref{lem:localization} below. The statement of \ref{monad:it:2} is
    immediate from the discussion in \S \ref{sec:correspondences}, since higher
    Segal conditions for a simplicial object $X$ translate directly into
    thinness conditions for the monad $M_X$.
\end{proof}

\begin{lem}
    \label{lem:localization}
    The functor
    \[
        \lambda: \tot(\P^*) \to \Delta
    \]
    from the proof of Theorem \ref{thm:monad} exhibits $\Delta$ as an
    $\infty$-categorical localization of $\tot(\P^*)$ along the set of
    morphisms of the form 
    \[
        \phi: ([m],I) \to ([n],J)
    \]
    where $\phi: [m] \to [n]$ a morphism in $\Delta$ such that $\phi(I) = J$.
\end{lem}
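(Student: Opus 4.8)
The plan is to exhibit $\lambda$ as a localization by constructing an explicit section $\sigma\colon \Delta \to \tot(\P^*)$ together with a natural transformation witnessing that $\sigma \circ \lambda$ is connected to the identity of $\tot(\P^*)$ through the class $W$ of morphisms to be inverted. The natural choice of section sends $[k]$ to the object $([k], [k])$ — that is, the full subset $\{0<1<\cdots<k\}$ of $[k]$, on which $\lambda$ indeed returns $[k]$ — and sends a morphism $\psi\colon[k]\to[\ell]$ to the morphism $([k],[k]) \to ([\ell],\psi([k]))$ covering $\psi$. One checks directly that $\lambda \circ \sigma = \id_\Delta$, so it remains to show that the adjunction unit, or rather a zig-zag of natural weak equivalences, connects $\sigma\circ\lambda$ to $\id_{\tot(\P^*)}$.

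The key step is the following. For an object $([n],I)$ with $I = \{i_0 < \cdots < i_k\}$, applying $\sigma\circ\lambda$ yields $([k],[k])$, while the identity yields $([n],I)$. There is a canonical morphism in $\tot(\P^*)$ from $([k],[k])$ to $([n],I)$: it covers the monotone map $[k]\to[n]$, $j\mapsto i_j$, and on subsets sends $[k]$ to $I$ — hence it lies in $W$, since its image in $\tot(\P^*)$ of the distinguished subset is exactly $I$. I would check that this assignment is natural in $([n],I)$, giving a natural transformation $\eta\colon \sigma\circ\lambda \Rightarrow \id_{\tot(\P^*)}$ whose components all lie in $W$. Together with $\lambda\circ\sigma = \id$, this data shows that $\lambda$ becomes an equivalence after inverting $W$: the induced functor $\tot(\P^*)[W^{-1}] \to \Delta$ has $\sigma$ as an inverse up to the natural equivalence coming from $\eta$ (the composite $\sigma\circ\lambda$ becomes equivalent to the identity via $\eta$, and $\lambda\circ\sigma$ is literally the identity), using the universal property of the localization. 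One should also verify that $\lambda$ carries every morphism of $W$ to an identity (hence an equivalence) in $\Delta$, which is immediate from the definition of $W$ and $\lambda$ — this legitimizes the factorization through $\tot(\P^*)[W^{-1}]$.

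To make the localization statement precise in the $\infty$-categorical sense, I would invoke the standard criterion (e.g. \cite[5.2.7]{lurie:htt} or the Dwyer--Kan style characterization): a functor $F\colon \A \to \B$ carrying $W$ to equivalences exhibits $\B$ as the localization $\A[W^{-1}]$ provided $F$ admits a fully faithful right (or left) adjoint after localization, or — more simply here — provided one can produce a functor $G\colon \B \to \A$ with $F\circ G \simeq \id_\B$ and a natural transformation $G\circ F \Rightarrow \id_\A$ valued in $W$. The section $\sigma$ and the transformation $\eta$ above supply exactly this data. The main obstacle I anticipate is checking the coherence/naturality of $\eta$ carefully: $\tot(\P^*)$ is the Grothendieck construction of a functor valued in posets, so naturality squares involve composing morphisms over $\Delta$ while tracking the chosen subsets, and one must confirm that the canonical morphisms $([k],[k]) \to ([n],I)$ assemble into a genuine natural transformation rather than merely a pointwise collection. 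Since all categories in sight are (nerves of) ordinary categories, this is a finite check with no homotopy-coherence subtleties, but it is the point where care is required; everything else is formal.
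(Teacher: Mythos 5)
Your argument is correct, but it is packaged differently from the paper's. The paper works at the level of functor categories: it shows that the right Kan extension $\lambda_*$ exists because each slice $[k]/\lambda$ has the initial object $([k],[k])$, deduces that $\lambda^*\colon \Fun(\Delta,\C)\to\Fun(\tot(\P^*),\C)$ is fully faithful, and identifies its essential image with the functors inverting the unit maps $([k],[k])\to([n],I)$, which by two-out-of-three are the functors inverting $W$. You instead work at the level of the index categories themselves, producing the section $\sigma\colon[k]\mapsto([k],[k])$ with $\lambda\circ\sigma=\id$ and the strictly natural transformation $\eta\colon\sigma\circ\lambda\Rightarrow\id$ with components in $W$, and then invoking the universal property of the localization. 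These are two faces of the same fact: the initiality of $([k],[k])$ in $[k]/\lambda$ is precisely the statement that $\sigma$ is left adjoint to $\lambda$ with identity unit, and your $\eta$ is the counit of that adjunction. Your route is more elementary and self-contained (no Kan extension machinery, only the fact that a pointwise equivalence between functors to an $\infty$-category is an equivalence); the paper's route has the advantage of directly producing the adjunction $\lambda^*\dashv\lambda_*$ on functor categories that is then used in the proofs of Proposition \ref{prop:alg} and Theorem \ref{thm:monad}. Your naturality check for $\eta$ is indeed the only point requiring care, and it does go through: the poset components of the squares are forced, and the underlying maps in $\Delta$ agree on the nose.

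One caveat, which affects the paper's proof equally and is really an issue with how the class $W$ is described in the statement: you assert that $\lambda$ carries every morphism of $W$ to an identity and call this immediate, but for a morphism $\phi\colon([m],I)\to([n],J)$ with $\phi(I)=J$ and $\phi$ \emph{not} injective on $I$ (e.g.\ the unique map $([1],[1])\to([0],[0])$), the induced map $\lambda(\phi)$ is a non-invertible surjection in $\Delta$, so such morphisms cannot be inverted by $\lambda$ and must be excluded from $W$. The class that is actually inverted, and the one needed for Theorem \ref{thm:monad}, consists of those $\phi$ with $\phi(I)=J$ and $\phi|_I$ injective; the paper's own two-out-of-three step likewise only covers this case. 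Your components $\eta_{([n],I)}$ all lie in this corrected class, so your argument is unaffected once $W$ is read in the intended sense.
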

\begin{proof}
    Let $\C$ be an $\infty$-category. We first claim that every object of
    $Y \in \Fun(\tot(\P^*),\C)$ admits a right Kan extension along $\lambda$. This
    follows from the pointwise formula: to determine the value of the right Kan
    extension at $[k] \in \Delta$, we need to compute a limit over the slice
    category $[k]/\lambda$. But this category has an initial object given by
    $([k],[k])$, so that $\lambda_*(Y)_k \simeq Y([k],[k])$. We thus obtain an
    adjunction of $\infty$-categories
    \[
        \lambda^*: \Fun(\Delta, \C) \lra \Fun(\tot(\P^*),\C): \lambda_*
    \]
    where $\lambda_*$ denotes the right Kan extension functor along $\lambda$.
    The fact that $\lambda^*$ is fully faithful now immediately follows from
    the fact that the counit $\id \to \lambda_*\lambda^*$ is an equivalence.
    Vice versa, the essential image of $\lambda^*$ is spanned by the collection
    of objects on which the unit $\lambda^* \lambda_* \to \id$ acts as an
    equivalence. Again, by the above computation of the slice categories, it
    follows that the action of the unit on $Y \in \Fun(\tot(\P^*),\C)$
    evaluated at $([n],I=\{i_0,...,i_k\})$ is the morphism 
    \[
        Y([k],[k]) \lra Y({([n],I)}).
    \]
    The fact that any functor $Y$ in the image of $\lambda^*$ needs to invert,
    more generally, morphisms of the type specified in the statement of the
    Lemma follows by two out of three.
\end{proof}

\begin{rem}
    \label{rem:unital}
    In \cite{fgkpw}, it is shown that every $2$-Segal space automatically
    satisfies certain unitality constraints which were previously required
    explicitly in the definition of so-called {\em unital} $2$-Segal spaces. It
    would be very interesting to study analogous phenomena for higher Segal
    conditions and the relevance for the lax monads of this section.
\end{rem}


\section{Higher excision}
\label{sec:higherex}

In this section, we explain a characterization of the higher Segal conditions
from \S \ref{sec:higher_segal_objects} in terms of higher-dimensional excision
conditions as they appear, for instance, in Goodwillie calculus. These results
are due to T. Walde \cite{walde:excision}.

Let $\C$ be an $\infty$-category and let
\[
    X: \Delta^{\op} \to \C
\]
be a simplicial object in $\C$.
Then it is straightforward to show that the following are equivalent:
\begin{enumerate}
    \item $X$ is a Segal object (i.e. a lower $1$-Segal object, in our sense).
    \item For every $0 \le k \le n$, the square
            \[
            \begin{tikzcd}
                X_{\{0,1,...,n\}}\ar{d} \ar{r} & X_{\{0,1,...,k\}} \ar{d}\\
                X_{\{k,k+1...,n\}} \ar{r} & X_{\{k\}} 
            \end{tikzcd}
            \]
            is a pullback square in $\C$ (following the conventions of Remark \ref{rem:convenient}).
    \item $X$ maps biCartesian squares in $\Delta$ to Cartesian squares in $\C$. 
\end{enumerate}
The latter characterization has some particular appeal, since it reformulates
the Segal conditions in term of intrinsically categorical properites of $\Delta$.
This raises the question whether there is an analogous characterization of the higher Segal conditions. 

\begin{exa}
    \label{exa:3segal}
    Unravelling the first lower $3$-Segal condition, stating that the cone
    \[
        X | (\L([4],3)^{\rhd})^{\op}
    \]
    is a limit cone in $\C$, we observe that it translates verbatim to the statement that the cube
        \begin{equation}
            \label{eq:3segcube}
            \begin{tikzcd}
                X_{\{0,1,2,3,4\}} \ar{rr}\ar{dr}\ar{dd} &  & X_{\{1,2,3,4\}}\ar{dr}\ar{dd} & \\
                                                      & X_{\{0,1,3,4\}} \ar{dd}\ar{rr} &  & \ar{dd} X_{\{1,3,4\}}\\
                X_{\{0,1,2,3\}} \ar{rr}\ar{dr} &  & X_{\{1,2,3\}}\ar{dr} & \\
                                             & X_{\{0,1,3\}} \ar{rr} & & X_{\{1,3\}}
            \end{tikzcd}
        \end{equation}
        is Cartesian (cf. \eqref{eq:first_3segal}).
    This is in analogy to the first lower $1$-Segal condition saying that the square
    \[
    \begin{tikzcd}
        X_{\{0,1,2\}}\ar{d} \ar{r} & X_{\{1,2\}} \ar{d}\\
        X_{\{0,1\}} \ar{r} & X_{\{1\}}
    \end{tikzcd}
    \]
    is a pullback square. Further, we note that the cube in $\Delta$,
    parametrizing the cube \eqref{eq:3segcube}, is a strongly biCartesian cube (i.e. all its faces are biCartesian). 
\end{exa}

\begin{thm}
    \label{thm:walde}
    Let $\C$ be an $\infty$-category with limits, let $X: \Delta^{\op} \to \C$ a simplicial
    object in $\C$, and let $d \ge 1$. Then the following are equivalent:
    \begin{enumerate}
        \item $X$ is a lower $(2d-1)$-Segal object.
        \item $X$ maps each strongly biCartesian $(d+1)$-cube in $\Delta$ to a Cartesian cube in $\C$. 
    \end{enumerate}
\end{thm}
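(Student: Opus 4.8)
The plan is to prove the equivalence by relating strongly biCartesian $(d+1)$-cubes in $\Delta$ to the combinatorics of cyclic polytopes, specifically using the fact — established earlier — that lower $(2d-1)$-Segal objects compute limits over \emph{all} triangulations of $C([n],2d-1)$ (Theorem \ref{thm:lowerupperfull}), not just over $\L([n],2d-1)$, and combined with the path space criteria (Proposition \ref{prop:pathspace}).

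\textbf{From (1) to (2).} First I would recall that a strongly biCartesian $(d+1)$-cube in $\Delta$ is, up to equivalence, determined by its initial vertex together with $d+1$ maps out of it whose pushouts fill in the rest; concretely such a cube is parametrized by a decomposition $[n] = I_0 \cup I_1 \cup \dots \cup I_d$ into consecutive intervals overlapping in single points (the $(d+1)$-fold iterated Segal-type subdivision), with the vertices of the cube indexed by subsets $S \subseteq \{1,\dots,d+1\}$ and the vertex at $S$ given by the union of the intervals selected by $S$ glued along their shared endpoints — equivalently, a biCartesian cube corresponds to a ``fan'' of intervals through a common chain of vertices. The key point is that the full limit of $X$ over the punctured cube (all vertices except the initial one) is precisely a limit over a poset that is cofinal in $\L([n], 2d-1)^{\op}$ for the appropriate $n$; this is exactly the content of the generating-simplex / cofinality computation already used in Example \ref{exa:lower_dim}(3) for $d=2$, now iterated $d$ times. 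One way to organize this cleanly is via the path space criterion: iterating Proposition \ref{prop:pathspace} shows that $X$ is lower $(2d-1)$-Segal if and only if a suitable $d$-fold iterated path space $\Plhd \cdots \Plhd X$ (with join factors inserted appropriately to land in the odd case at each stage, then down to dimension $1$) is lower $1$-Segal, i.e.\ an ordinary Segal object, which by the equivalence at the start of this section is equivalent to sending biCartesian squares to Cartesian squares; unwinding the path-space pullbacks turns a single Cartesian square for the iterated path space into a Cartesian $(d+1)$-cube for $X$.

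\textbf{From (2) to (1).} Conversely, assuming $X$ sends strongly biCartesian $(d+1)$-cubes to Cartesian cubes, I would verify the lower $(2d-1)$-Segal condition directly: for $n > 2d-1$, decompose the triangulation $\L([n],2d-1)$ using Lemma \ref{lem:excision} (Rambau's excision), writing $X_n \to X_{\L([n],2d-1)}$ as a composite of maps $X_{K} \to X_{L}$ each of which, by the argument of Lemma \ref{lem:segal_excision}, is controlled by a lower $(2d-2)$-Segal-type right Kan extension statement whose geometric content at each stacked simplex is exactly a strongly biCartesian $(d+1)$-cube's worth of pullback data. More efficiently, I would induct: a strongly biCartesian $(d+1)$-cube decomposes along any one axis into two strongly biCartesian $d$-cubes, so the cube condition for $(d+1)$ implies the cube condition for $d$ (restricting to faces), hence by induction $X$ is lower $(2d-3)$-Segal, hence lower $(2d-2)$-Segal and lower $(2d-1)$-Segal follow — \emph{except} that this needs the converse direction's base case and a genuine argument that the extra dimension upgrades $(2d-3)$ to $(2d-1)$ rather than merely $(2d-2)$. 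That upgrade is where the path space criterion must be invoked in the form: $X$ lower $(2d-1)$-Segal $\iff \Plhd X$ is upper $(2d-2)$-Segal $\iff \Prhd \Plhd X$ is lower $(2d-3)$-Segal, so that one checks the cube condition survives the two path-space shifts, which amounts to the elementary observation that a biCartesian cube in $\Delta$ remains biCartesian after applying the join functors $[n] \mapsto [0]\ast[n]$ and $[n]\mapsto [n]\ast[0]$.

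\textbf{Main obstacle.} The hard part will be the bookkeeping identifying the punctured strongly biCartesian $(d+1)$-cube with a cofinal subposet of $\L([n],2d-1)^{\op}$ for the correct $n$, and checking that the Cartesian-cube condition really is equivalent to \emph{all} the lower $(2d-1)$-Segal maps being equivalences (for all $n$ at once), not just the first one with $n = 2d$. I expect the cleanest route is to avoid a direct poset-matching and instead route everything through iterated path spaces: reduce both (1) and (2) to statements about an ordinary Segal object via Proposition \ref{prop:pathspace}, and separately prove the purely combinatorial lemma that $d$-fold iterated path space constructions send the class of strongly biCartesian $(d+1)$-cubes bijectively (up to the relevant equivalences) onto the class of biCartesian squares. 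With that lemma in hand, both implications follow from the already-established $d=1$ case stated at the opening of this section. The residual technical point — that the relevant path-space functors preserve the pushout/pullback structure on the nose up to coherent equivalence — is routine but must be stated carefully; this is where I would spend the most care.
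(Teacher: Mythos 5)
The paper does not actually prove Theorem \ref{thm:walde}; it defers entirely to \cite{walde:excision}, so there is no in-text argument to compare yours against. Judged on its own terms, your proposal has genuine gaps. First, your combinatorial description of strongly biCartesian $(d+1)$-cubes as ``unions of consecutive intervals in a decomposition of $[n]$'' is wrong for $d \ge 2$: the cube appearing in the first lower $3$-Segal condition (Example \ref{exa:3segal}) is $S \mapsto [4] \setminus S$ for $S \subseteq \{0,2,4\}$, a cube of \emph{complements} of pairwise separated vertices, not a cube of unions of intervals (whose initial vertex, the empty union, would not even be an object of $\Delta$). Second, and more seriously, your ``cleanest route'' through iterated path spaces cannot work: the lower $(2d-1)$-Segal condition is a \emph{lower odd} condition, which is precisely the case omitted from Proposition \ref{prop:pathspace} --- that proposition relates lower even, upper even, and upper odd conditions to path spaces, but gives no criterion for lower odd. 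Your asserted chain ``$X$ lower $(2d-1)$-Segal $\iff$ $\Plhd X$ upper $(2d-2)$-Segal'' misquotes it (the correct statement has ``upper'' on the left-hand side as well). This is not a technicality: already lower $3$-Segal is not detected by any iterated initial/final path space, which is one reason the excision formulation is of interest in the first place.

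Third, the induction in your $(2)\Rightarrow(1)$ runs in the wrong direction. A Cartesian $(d+1)$-cube need not have Cartesian $d$-dimensional faces, so the $(d+1)$-cube condition does not imply the $d$-cube condition; consistently with this, Theorem \ref{thm:fully} shows that lower $(2d-3)$-Segal \emph{implies} lower $(2d-1)$-Segal and not conversely, so you cannot obtain lower $(2d-3)$-Segal as an intermediate consequence of the hypothesis in (2). A correct proof has to work at fixed ``degree'' $d$: classify the strongly biCartesian $(d+1)$-cubes in $\Delta$ (as the separated-complement cubes above, possibly together with degenerate directions), and then show that the punctured such cubes and the lower hemispheres $\L([n],2d-1)$ generate the same class of limit decompositions of $X_n$ --- for instance by a stacking/excision argument in the spirit of Lemmas \ref{lem:excision} and \ref{lem:segal_excision}, which your first paragraph gestures at for $n=2d$ but does not carry out for general $n$. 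I would rebuild the argument around that combinatorial comparison and drop the path-space route entirely.
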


For the proof we refer to \cite{walde:excision}.

\section{Further perspectives}
\label{sec:further}

\paragraph{Coherent Pachner moves.}%
\label{sub:coherent_pachner_moves}

Let $1 \le k \le n$, let $S \subset \partial\Delta^{n+1}$ be an abstract
simplicial complex which is the union of $k$ faces, and let $S'$ be the union
of the complementary faces. A {\em $(k,n+1-k)$-Pachner move} on a triangulated
piecewise linear $n$-manifold $M$ is the procedure of replacing a subcomplex of $M$,
identified with $S$, by $S'$.

Now let $\C$ be an $\infty$-category with limits, and let $X: \Delta^{\op} \to
\C$ be a $d$-Segal object.

We may view the datum 
\begin{equation}
    \label{eq:segal-pachner}
    \begin{tikzcd}
        S = \L([d+1],d) \ar[hook]{r} & \Delta^{d+1} & \ar[hook']{l} \U([d+1],d) = S'
    \end{tikzcd}
\end{equation}
of the lower and hemisphere as specifying a type of
$(\frac{d+1}{2},\frac{d+1}{2})$ (resp. $(\frac{d+2}{2},\frac{d}{2})$, depending
on the parity of $d$) Pachner move which we refer to as a Segal-Pachner move.
The lowest $d$-Segal maps comprise a correspondence
\[
\begin{tikzcd}
X_{S} & \ar[swap]{l}{\simeq} X_{d+1} \ar{r}{\simeq} & X_{S'}
\end{tikzcd}
\]
whose legs are equivalences, which we may interpret as the statement that the
object $X_S$ is ``invariant'' under the Pachner move which replaces $S$ by
$S'$. The higher simplices of $X$ along with the higher $d$-Segal maps provide
higher coherence data corresponding to higher associativity constraints. 

More globally, given a linearly ordered set $V$ and an abstract $n$-dimensional
simplicial complex $T$ on $V$ whose geometric realization is a manifold $M$, we
may obtain correspondences 
\[
\begin{tikzcd}
    X_{T} & \ar[swap]{l}{\simeq} X_{B} \ar{r}{\simeq}  & X_{T'}
\end{tikzcd}
\]
associated to Segal-Pachner moves thus establishing the invariance of $X_T$
(here $B$ is obtained by attaching an $n+1$-simplex to $T$). 

To turn this construction into a useful invariant of the manifold $M$, one
needs to address
\begin{enumerate}
    \item the dependence on the linear order of $V$, 
    \item the fact that we only have access to the Segal-Pachner moves (and not
        more general Pachner moves). 
\end{enumerate}

In dimension $2$, it turns out that there is a satisfying resolution of these
issues \cite{dk:triangulated,d:a1homotopy, dk:crossed}, leading to a combinatorial
framework for the construction of correspondence-valued TFTs, conceptually
perhaps culminating in a version of the cobordism hypothesis for these types of
TFTs established in \cite{stern:algebra}.

In dimensions bigger than $2$, the construction of useful analogous
higher-dimensional TFT-type manifold invariants from higher Segal objects with
additional symmetries is a very interesting open problem. 

\paragraph{Additive coefficients and $2$-categorical methods.}%
\label{sub:lax_simplicial_objects}

In \cite{DJW19}, the classical Dold--Kan correspondence is used to analyze
higher Segal conditions for simplicial abelian groups:

\begin{thm}
    \label{thm:simplicial-abelian} Let $X:\Delta^{\op} \to \A$ be a simplicial
    object valued in an abelian category $\A$ and $m \ge 1$. Then the following
    are equivalent:
    \begin{enumerate}
        \item $X$ is $2m$-Segal.
        \item $X$ has unique outer horn fillers above dimension $m$, i.e. for
            every $n > m$, the horn restriction maps
            \[
                X_n \to X_{\Lambda_0^n} \text{ and } X_n \to X_{\Lambda_n^n}
            \]
            are isomorphisms in $\A$. 
        \item The normalized chain complex $C(X)$ associated to $X$ under the
            Dold--Kan correspondence is $m$-truncated, i.e., $C(X)_{n} \cong
            0$, for $n > m$. 
    \end{enumerate}
\end{thm}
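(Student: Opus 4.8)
The plan is to prove the cycle of equivalences $(2)\Leftrightarrow(3)\Rightarrow(1)\Rightarrow(2)$, using the Dold--Kan correspondence to translate everything into a statement about chain complexes. The key reduction is that for a simplicial object $X$ with normalized chain complex $C = C(X)$, the $n$-simplices decompose as $X_n \cong \bigoplus_{[n]\twoheadrightarrow[k]} C_k$, and all the relevant ``matching object'' limits --- the limits $X_K$ appearing in Definition \ref{defi:higher_segal}, as well as the horn objects $X_{\Lambda_0^n}$ and $X_{\Lambda_n^n}$ --- can be computed explicitly in terms of the $C_k$. Since $\A$ is abelian, all these limits are pullbacks/kernels that exist unconditionally, and the Segal and horn-filling maps become maps of explicit finite direct sums whose kernels and cokernels one can read off from $C$. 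I would set this combinatorial bookkeeping up once at the start.

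For $(2)\Leftrightarrow(3)$: the restriction map $X_n \to X_{\Lambda_0^n}$ (resp. $X_n \to X_{\Lambda_n^n}$) fits into the standard identification of its kernel and cokernel with (shifts of) the top normalized chain group. Concretely, $X_{\Lambda_n^n}$ is the limit of $X$ over all proper faces containing the last vertex, and the fiber of $X_n \to X_{\Lambda_n^n}$ is computed by the normalized chains; one finds $\ker(X_n \to X_{\Lambda_n^n}) \cong C_n$ and the map is surjective precisely when a certain boundary-type map is zero, which by an inductive/telescoping argument is equivalent to $C_m = 0$ for all $m > \max(1, \ldots)$ in the relevant range --- i.e. $C$ is $m$-truncated. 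The cleanest route is: $X$ has unique outer horn fillers above dimension $m$ $\iff$ for $n > m$ both maps $X_n \to X_{\Lambda_0^n}$, $X_n \to X_{\Lambda_n^n}$ are isos $\iff$ (by the direct-sum decomposition and induction on $n$) $C_n = 0$ for $n > m$. This is essentially the classical fact that outer horn fillers detect the top of the normalized complex, and I would cite or reprove the Dold--Kan computation of outer matching objects rather than grind it out.

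For $(3)\Rightarrow(1)$: if $C$ is $m$-truncated then $X$ is ($m$-)coskeletal in a strong sense, and more precisely $X_n \cong \bigoplus_{[n]\twoheadrightarrow[k], k \le m} C_k$; I would show directly that for any abstract simplicial complex $K$ on $[n]$ containing all faces of dimension $\le m$ --- which $\L([n],2m)$ and $\U([n],2m)$ do, since they have dimension $2m \ge m$ --- the limit $X_K$ equals $X_n$, because the limit over $K^{\op}$ already sees all the summands $C_k$ with $k \le m$ and the higher summands vanish. The point is that $\L([n],2m)$ contains the full $m$-skeleton of $\Delta^n$ (as $2m \ge m$), so computing $\lim X|\L([n],2m)^{\op}$ recovers the same direct sum. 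Hence the $2m$-Segal maps are isomorphisms. Finally $(1)\Rightarrow(2)$ follows from Theorem \ref{thm:fully}: a $2m$-Segal object is $k$-Segal for all $k > 2m$, in particular $(2n)$-Segal for large $n$, and the outer horns $\Lambda_0^n, \Lambda_n^n$ are themselves triangulations-adjacent data that one extracts from the high Segal conditions --- more carefully, one uses that the $m$-skeleton controls both sides and runs the argument of $(3)\Rightarrow(1)$ in reverse, or invokes Theorem \ref{thm:lowerupperfull} to pass between the Segal limits and the outer horn limits.

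\textbf{Main obstacle.} The genuinely delicate point is the bookkeeping in $(3)\Rightarrow(1)$: one must verify that the limit $\lim X|\L([n],2m)^{\op}$, taken over a poset of faces of dimension up to $2m$, really does recover \emph{all} of the summands $\bigoplus_{k \le m} C_k$ and no more, with the correct maps --- i.e. that the matching-object limit over the upper/lower triangulation collapses to $X_n$ exactly when $C$ is $m$-truncated and not merely $2m$-truncated. This is where the numerology $2m$ versus $m$ is used essentially (the triangulation has dimension $2m$, but only the $m$-skeleton carries information), and getting the direction $C_k = 0$ for $m < k \le 2m$ to be \emph{forced} rather than merely allowed requires care. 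I expect this to reduce, via the explicit Dold--Kan formulas, to a rank count on finite direct sums, but that count is the crux. Alternatively, one can sidestep it by proving $(2)\Leftrightarrow(3)$ and $(2)\Leftrightarrow(1)$ separately, deriving $(2)\Leftrightarrow(1)$ from the already-established equivalence of $(1)$ with the ``lower $(2m-1)$-Segal'' condition dualized via path spaces (Proposition \ref{prop:pathspace}) together with Theorem \ref{thm:fully}; that reorganization may be the cleanest way to avoid the rank count altogether.
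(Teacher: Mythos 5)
The paper itself gives no proof of this theorem; it is quoted from \cite{DJW19}. Judging your proposal on its own terms: the overall strategy (translate everything through Dold--Kan) is the right one and matches the cited source, and your treatment of $(2)\Leftrightarrow(3)$ is essentially correct --- the kernel of $X_n \to X_{\Lambda_n^n}$ is $\bigcap_{i<n}\ker d_i \cong C_n$, and the outer horn restriction maps are in fact \emph{always} surjective for simplicial objects in an abelian category, so the isomorphism condition is purely the vanishing of this kernel (your remark that surjectivity imposes a further ``boundary-type'' condition is a red herring).

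However, the step $(3)\Rightarrow(1)$ contains a genuine error. You assert that $\L([n],2m)$ and $\U([n],2m)$ contain the full $m$-skeleton of $\Delta^n$ ``since they have dimension $2m \ge m$''. This is false. The facets of $\L([n],2)$ are the sets $\{0,i,i+1\}$ (as listed in Example \ref{exa:lower_dim}), so already for $m=1$, $n=4$ the edge $\{1,3\}$ does not belong to $\L([4],2)$; in general the $m$-face $\{1,3,5,\dots,2m+1\}$ lies in no even $(2m+1)$-subset, since one cannot cover $m+1$ pairwise non-adjacent elements, none equal to $0$, by $\{0\}$ together with $m$ adjacent pairs. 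So the mechanism you propose --- ``the limit over $K^{\op}$ already sees all the summands $C_k$ with $k\le m$'' --- does not get off the ground. Moreover, even if $K$ did contain the $m$-skeleton, the conclusion $X_K \cong X_n$ would not follow from $m$-truncatedness alone: one needs that $X|\P^*([n])^{\op}$ is a right Kan extension of its restriction to $K^{\op}$, and that is exactly the nontrivial combinatorial content you defer (note also that $m$-truncated is not the same as $m$-coskeletal; coskeletality is governed by $\bigcap_{i=0}^{n}\ker d_i$, not by $C_n$). The direction $(1)\Rightarrow(2)$ is likewise not established: the outer horn $\Lambda_0^n$ is not a triangulation of a cyclic polytope, so Theorem \ref{thm:lowerupperfull} does not apply to it directly, and ``triangulations-adjacent data'' is not an argument. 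The workable route --- the one your own ``alternative'' gestures at --- is to relate the lowest nontrivial Segal maps to the outer horn maps via the iterated path space criterion (Proposition \ref{prop:pathspace}) and then carry out the explicit Dold--Kan bookkeeping for the resulting initial/final path spaces; as written, though, the crux of $(1)\Leftrightarrow(3)$ is exactly the part of the proposal that is either wrong or missing.
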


In \cite{dyck:dk}, a categorified variant of the Dold-Kan correspondence is
established, furnishing an equivalence
\[
\begin{tikzcd}
    \C: \St_{\DDelta} \lra \Ch_{\ge 0} (\St)
\end{tikzcd}
\]
between $2$-simplicial objects in the $(\infty,2)$-category $\St$ of stable
$\infty$-categories and connective chain complexes in $\St$. An interesting
feature of this correspondence is the appearance of the simplex $2$-category
$\DDelta$ defined as the full sub $2$-category of the $2$-category of
categories spanned by the standard ordinals. A $2$-simplicial object is then a
$2$-functor
\[
    \X: \DDelta^{\op} \to \St
\]
comprising an underlying ordinary simplicial object, along with presentations
of the $2$-morphisms in $\DDelta$ as natural transformations in $\St$. 

\begin{exa}
    \label{ex:2simplex}
    The $2$-morphisms among the nondegenerate subsimplices of the $2$-simplex can be depicted as follows:
    \[
    \begin{tikzcd}
               &    & {1} \ar[bend left, Rightarrow]{ddrr}&  & \\
               & {01} \ar[bend left, dashed, Rightarrow]{rr}\ar[bend right, Rightarrow]{dr} & {012}   & {12} & \\
        {0} \ar[bend left, Rightarrow]{uurr} \ar[bend right, Rightarrow]{rrrr}  & &   {02} \ar[bend right, Rightarrow]{ur} & & {2}
    \end{tikzcd}
    \]
    where the dashed arrow signifies that the composite factors through the degenerate edge $11$.
\end{exa}

While, for an ordinary simplicial object, the value of $X$ on $\Lambda_0^2$
would be defined as the limit of the diagram
\[
\begin{tikzcd}
    X_{01}\ar{dr}  & & X_{02}\ar{dl}\\
            & X_{0} & 
\end{tikzcd}
\]
the additional $2$-categorical data exhibited in Example \ref{ex:2simplex}
suggests that one may define the value of a $2$-simplicial object $\X$ on this horn
as a partially lax limit of the marked diagram 
\[
\begin{tikzcd}
    \X_{01}\ar[swap]{dr}{+} \ar{rr}  & & \X_{02}\ar{dl}{+}\\
            & \X_{0} & 
\end{tikzcd}
\]
where the markings indicate the edges over which the corresponding triangle in
the limit cone is supposed to commute up to natural $2$-isomorphism, while over
unmarked edges, we only require commutativity up to a possibly noninvertible
$2$-morphism. 

In work in progress, we establish a suitable formalism to analyze the
corresponding horn filling conditions, and establish the following result:

\begin{thm}
    \label{thm:2simplicial}
    Let $\X \in \St_{\DDelta}$ be a $2$-simplicial stable $\infty$-category.
    \begin{enumerate}
        \item Then $\X$ is an {\em outer Kan complex} in the sense that for every $n \ge 1$, 
            \begin{enumerate}
                \item the map $\X_n \lra \X_{\LLambda_0^n}$ admits a fully faithful right adjoint (left localization),
                \item the map $\X_n \lra \X_{\LLambda_n^n}$ admits a fully faithful left adjoint (right localization).
            \end{enumerate}
        \item The following are equivalent:
            \begin{enumerate}
                \item for every $n > m$, the map $\X_n \overset{\simeq}{\lra} \X_{\LLambda_0^n}$ is an equivalence,
                \item for every $n > m$, the map $\X_n \overset{\simeq}{\lra} \X_{\LLambda_n^n}$ is an equivalence,
                \item the categorified normalized chain complex $\C(\X)$ is $m$-truncated,
                \item $\X|\Delta$ is $2m$-Segal.
            \end{enumerate}
    \end{enumerate}
\end{thm}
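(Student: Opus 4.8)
The plan is to prove Theorem~\ref{thm:2simplicial} by combining the categorified Dold--Kan equivalence $\C\colon \St_{\DDelta} \xrightarrow{\ \simeq\ } \Ch_{\ge 0}(\St)$ of \cite{dyck:dk} with the $2$-categorical horn-filling analysis that is being developed in the same ongoing work. For part~(1), I would first unpack the partially lax limit defining $\X_{\LLambda_0^n}$: using the $2$-morphisms among subsimplices of $\Delta^n$ (the higher-dimensional analogue of Example~\ref{ex:2simplex}), the horn value is a partially lax limit of the diagram $\X|(\Lambda_0^n)^{\op}$ with the edges emanating from the initial vertex $0$ marked. The key structural observation is that such partially lax limits of stable $\infty$-categories, when the marked edges form a suitable ``tree-like'' or ``initial'' pattern, admit Bousfield-type localization/colocalization adjunctions; the restriction map $\X_n \to \X_{\LLambda_0^n}$ is then recognized as such a localization. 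Concretely, I would exhibit the right adjoint as a (partially lax) right Kan extension along the inclusion $(\Lambda_0^n)^{\op} \hookrightarrow (\Delta^n)^{\op}$ in the appropriate $2$-categorical sense, and check fully-faithfulness by a slice-category computation showing the counit is an equivalence. The case of $\LLambda_n^n$ is dual, with the roles of lax and colax (equivalently, left vs.\ right adjoint) interchanged, reflecting the opposite orientation of the $2$-morphisms at the terminal vertex.

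For part~(2), the strategy is to run the chain of equivalences $(c)\Leftrightarrow(d)\Leftrightarrow(a)\Leftrightarrow(b)$. The implication $(c)\Leftrightarrow(d)$ is the main payoff of the categorified Dold--Kan correspondence: under $\C$, the $2m$-Segal condition on $\X|\Delta$ should translate into $m$-truncatedness of $\C(\X)$, exactly paralleling the classical statement of Theorem~\ref{thm:simplicial-abelian}, item~(1)$\Leftrightarrow$(3). I would deduce this by noting that $\C(\X)_k$ is built from the ``top'' part of $\X_k$ not seen by degeneracies (a categorified normalized chains functor), and that the $2m$-Segal maps $X_n \to X_{\L([n],2m)}$ being equivalences forces these higher categorified chains to vanish, using Theorem~\ref{thm:fully} to reduce all higher Segal conditions to the single one in degree $2m$ (in fact, by Theorem~\ref{thm:simplicial-abelian}, which one expects to lift to the $2$-categorical/stable setting, $2m$-Segal is equivalent to unique outer horn fillers above dimension $m$ — this is precisely the bridge to $(a)$ and $(b)$). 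Then $(c)\Leftrightarrow(a)$ (and dually $(c)\Leftrightarrow(b)$) follows because, by part~(1), the outer horn maps are always localizations/colocalizations, so they are equivalences precisely when they are fully faithful in the other variable too, which under Dold--Kan is controlled by the vanishing $\C(\X)_n \simeq 0$ for $n > m$: one shows inductively that $\X_n \to \X_{\LLambda_0^n}$ is an equivalence iff $\C(\X)_n \simeq 0$, since the ``defect'' of this map is measured exactly by the top chain object.

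The main obstacle I anticipate is making the notion of \emph{partially lax limit} and its interaction with $2$-categorical (co)localizations fully rigorous — in particular, establishing that the restriction-to-horn functors $\X_n \to \X_{\LLambda_0^n}$ and $\X_n \to \X_{\LLambda_n^n}$ genuinely admit the claimed adjoints with the stated exactness properties. This requires a robust theory of lax/colax Kan extensions in $\St$ and a Beck--Chevalley-type compatibility ensuring that the adjoints assemble coherently as $n$ varies; it is precisely the formalism alluded to as ``work in progress'' in the paragraph preceding the statement. A secondary technical point is verifying that the categorified normalized chains functor $\C$ interacts with the outer horn filtration in the expected way, i.e.\ that $\C(\X)_n$ is computed as the iterated (co)fiber of the relevant face maps restricted to the horn — this is the direct analogue of the classical fact that normalized chains are intersections of kernels of face maps, but in the stable $\infty$-categorical setting it must be phrased via (co)limits and checked to be compatible with the $2$-simplicial structure. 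Once these foundational pieces are in place, the logical implications among (a)--(d) should follow by the inductive bootstrapping sketched above, mirroring the proof of Theorem~\ref{thm:simplicial-abelian} one categorical level up.
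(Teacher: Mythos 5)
The paper does not actually prove Theorem \ref{thm:2simplicial}: it is explicitly announced as the outcome of ``work in progress'' establishing a suitable formalism for the $2$-categorical horn-filling conditions, and no argument is given in the text. So there is no in-paper proof to measure your proposal against. Your strategy does align with everything the paper signals about the intended route --- the categorified Dold--Kan equivalence of \cite{dyck:dk}, the partially lax limits over marked horn diagrams illustrated in Example \ref{ex:2simplex}, and the analogy with Theorem \ref{thm:simplicial-abelian} lifted one categorical level.

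That said, what you have written is a plan rather than a proof, and the gaps you flag at the end are not secondary technicalities --- they are the entire mathematical content of the statement. Part (1) \emph{is} the assertion that the partially lax restriction functors $\X_n \to \X_{\LLambda_0^n}$ and $\X_n \to \X_{\LLambda_n^n}$ admit fully faithful adjoints of the indicated handedness; saying this ``should follow'' from a Bousfield-type localization for partially lax limits with a tree-like marking pattern names the desired conclusion without supplying the lax Kan extension formalism that would deliver it, and in particular does not explain why the adjoint to the $\LLambda_0^n$-restriction is a \emph{right} adjoint while the $\LLambda_n^n$-restriction acquires a \emph{left} adjoint (the asymmetry coming from the orientation of the $2$-morphisms is plausible but unverified). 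For part (2), your bridge from (d) to (a)/(b) leans on ``Theorem \ref{thm:simplicial-abelian}, which one expects to lift to the $2$-categorical/stable setting'' --- but that lift is precisely equivalence (a)$\Leftrightarrow$(b)$\Leftrightarrow$(d) of the theorem being proved, so as written the step is circular. Similarly, the identification of the ``defect'' of the outer horn map with $\C(\X)_n$ (needed for (a)$\Leftrightarrow$(c)) is asserted as the categorified analogue of normalized chains being intersections of kernels of face maps, but in the stable setting this requires an actual inductive fiber-sequence argument compatible with the $2$-simplicial structure, which is not sketched. In short: the proposal is a reasonable reconstruction of the announced strategy, but every implication that carries weight is deferred to the same unproven formalism the paper itself defers to.
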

    
\begin{rem}
    \label{rem:general}
    We expect that the categorified Dold-Kan correspondence along with Theorem
    \ref{thm:2simplicial} generalize from $\St$ to lax additive
    $(\infty,2)$-categories which are idempotent complete in a suitable sense. 
    It would also be very interesting to establish general relations between
    $2$-horn filling conditions and higher Segal conditions beyond this lax
    additive context. 
\end{rem}

\addcontentsline{toc}{section}{References}
\bibliographystyle{alpha}
\bibliography{refs}

\end{document}